\documentclass[letter,10pt]{amsart}
\usepackage{amssymb,amsmath,amsfonts}
\usepackage{mathrsfs}
\usepackage[left=1 in, right=1 in,top=1 in, bottom=1 in]{geometry}
\usepackage{mathenv}

%
%
%
%
%
%




\newtheorem{theorem}{Theorem}[section]
\newtheorem{lemma}[theorem]{Lemma}

\newtheorem{definition}[theorem]{Definition}

\makeatletter
\renewcommand \theequation {%
\ifnum \c@section>\z@ \@arabic\c@section.%
\fi\@arabic\c@equation} \@addtoreset{equation}{section}
\makeatother

\providecommand{\ud}[1]{\mathrm{d}{#1}}
\providecommand{\abs}[1]{\left\vert#1\right\vert}
\providecommand{\nm}[1]{\left\Vert#1\right\Vert}

\providecommand{\tm}[2]{\left\Vert#1\right\Vert_{L^2(#2)}}
\providecommand{\im}[2]{\left\Vert#1\right\Vert_{L^{\infty}(#2)}}

\providecommand{\lnnm}[1]{{\left\Vert#1\right\Vert}_{L^{\infty}L^{\infty}}}
\providecommand{\lnm}[1]{\left\Vert#1\right\Vert_{L^{\infty}}}
\providecommand{\tnm}[1]{\left\Vert#1\right\Vert_{L^{2}}}
\providecommand{\tnnm}[1]{{\left\Vert#1\right\Vert}_{L^{2}L^2}}

\def\dt{\partial_t}
\def\p{\partial}

\def\half{\frac{1}{2}}
\def\rt{\rightarrow}
\def\r{\mathbb{R}}
\def\no{\nonumber}
\def\ue{\mathrm{e}}

\def\ds{\displaystyle}

\def\u{U}
\def\ub{\mathscr{U}}
\def\bu{\bar U}
\def\bub{\bar{\mathscr{U}}}

\def\e{\epsilon}
\def\s{\mathbb{S}}

\def\vx{\vec x}
\def\vw{\vec w}

\def\nx{\nabla_{x}}

\def\l{\lambda}
\def\ll{\mathcal{L}}

\def\ff{\mathscr{F}}

\def\q{Q}
\def\qb{\mathscr{Q}}
\def\rk{R_{\kappa}}
\def\rr{\mathscr{R}}

\def\v{\mathscr{V}}
\def\d{\delta}

\def\pp{\mathcal{P}}
\def\vn{\vec\nu}
\def\xc{X_{cl}}
\def\wc{W_{cl}}

\def\ss{f}
\def\g{g}
\def\h{h}
\def\id{\textbf{\textrm{1}}}

\begin{document}
\title{Asymptotic Analysis of Unsteady Neutron Transport Equation}
\author{Lei Wu}
\address{
Department of Mathematical Sciences\\
Carnegie Mellon University \\
Wean Hall 6113, Pittsburgh, PA 15213, USA } \email[L.
Wu]{lwu2@andrew,cmu.edu}
\subjclass[2000]{35L65, 82B40, 34E05}

\begin{abstract}
Consider the unsteady neutron transport equation with diffusive boundary condition in 2D convex domains. We establish the diffusive limit with both initial layer and boundary layer corrections. The major difficulty is the lack of regularity in the boundary layer with geometric correction. Our contribution relies on a detailed analysis of asymptotic expansions inspired by the compatibility condition and an intricate $L^{2m}-L^{\infty}$ framework which yields stronger remainder estimates.\\
\textbf{Keywords:} diffusive boundary, geometric correction, $L^{2m}-L^{\infty}$ framework.
\end{abstract}

\maketitle

%


\pagestyle{myheadings} \thispagestyle{plain} \markboth{LEI WU}{ASYMPTOTIC ANALYSIS OF NEUTRON
TRANSPORT EQUATION}

\section{Introduction}

\subsection{Problem Formulation}

We consider the unsteady neutron transport equation in a
two-dimensional smooth convex domain with diffusive boundary. This model describes the motion of neutrons in nuclear reactors, where the particles may be reflected diffusively on the boundary wall. Mathematically, in the time domain $[0,\infty)\ni t$, the space
domain $\Omega\ni\vx=(x_1,x_2)$ where $\p\Omega\in C^3$, and the velocity domain
$\s^1\ni\vw=(w_1,w_2)$, the neutron density $u^{\e}(t,\vx,\vw)$
satisfies
\begin{eqnarray}\label{transport}
\left\{
\begin{array}{l}
\e^2\dt u^{\e}+\e \vw\cdot\nabla_x u^{\e}+u^{\e}-\bar
u^{\e}=0\ \ \ \text{for}\ \
(t,\vx,\vw)\in[0,\infty)\times\Omega\times\s^1,\\\rule{0ex}{1.0em}
u^{\e}(0,\vx,\vw)=h(\vx,\vw)\ \ \text{for}\ \ (\vx,\vw)\in\Omega\times\s^1\\\rule{0ex}{1.0em}
u^{\e}(t,\vx_0,\vw)=\pp[u^{\e}](t,\vx_0)+\e g(t,\vx_0,\vw)\ \ \text{for}\ \ t\in[0,\infty),\ \ \vx_0\in\p\Omega,\ \ \text{and}\ \ \vw\cdot\vn<0,
\end{array}
\right.
\end{eqnarray}
where
\begin{eqnarray}\label{average}
\bar u^{\e}(t,\vx)=\frac{1}{2\pi}\int_{\s^1}u^{\e}(t,\vx,\vw)\ud{\vw},
\end{eqnarray}
the diffusive boundary
\begin{eqnarray}\label{diffusive}
\pp[u^{\e}](t,\vx_0)=\frac{1}{2}\int_{\vw\cdot\vn>0}u^{\e}(t,\vx_0,\vw)(\vw\cdot\vn)\ud{\vw},
\end{eqnarray}
and $\vn$ is the outward unit normal vector, with the Knudsen number $0<\e<<1$. The initial and boundary data satisfy the
compatibility condition
\begin{eqnarray}\label{compatibility condition}
h(\vx_0,\vw)=\pp[h](\vx_0)+\e g(0,\vx_0,\vw)\ \ \text{for}\ \ \vx_0\in\p\Omega\ \ \text{and}\ \ \vw\cdot\vn<0.
\end{eqnarray}
We intend to study the behavior of $u^{\e}$ as $\e\rt0$. Heuristically, the Knudsen number $\e$ represents the scale of mean free path, which measures the average distance a particle can travel between two scattering collisions. When $\e$ shrinks to zero, the collisions occur more and more frequently and the overall behavior of the system is closer and closer to the macroscopic phenomenon.

Based on the flow direction, we can divide the physical boundary
$\Gamma=\{(\vx_0,\vw):\ \vx_0\in\p\Omega,\ \vw\in\s^1\}$ into the in-flow boundary
$\Gamma^-$, the out-flow boundary $\Gamma^+$, and the grazing set
$\Gamma^0$ as
\begin{eqnarray}
\Gamma^{-}&=&\{(\vx_0,\vw):\ \vx_0\in\p\Omega,\ \vw\cdot\vn<0\},\\
\Gamma^{+}&=&\{(\vx_0,\vw):\ \vx_0\in\p\Omega,\ \vw\cdot\vn>0\},\\
\Gamma^{0}&=&\{(\vx_0,\vw):\ \vx_0\in\p\Omega,\ \vw\cdot\vn=0\}.
\end{eqnarray}
It is easy to see $\Gamma=\Gamma^+\cup\Gamma^-\cup\Gamma^0$.

\subsection{Background}

Diffusive limit, or more general hydrodynamic limit, is central to connecting kinetic theory and fluid mechanics. Since early 20th century, this type of problems have been extensively studied in many different settings: steady or unsteady, linear or nonlinear, strong solution or weak solution, etc.

Among all these variations, one of the simplest but most important models - neutron transport equation in bounded domains, where the boundary layer effect shows up, is widely regarded as a prototype of more complicated nonlinear Boltzmann equation, and has attracted a lot of attention since the dawn of atomic age. We refer to the references
\cite{Larsen1974=}, \cite{Larsen1974}, \cite{Larsen1975}, \cite{Larsen1977}, \cite{Larsen.D'Arruda1976}, \cite{Larsen.Habetler1973}, \cite{Larsen.Keller1974}, \cite{Larsen.Zweifel1974}, \cite{Larsen.Zweifel1976}, \cite{Li.Lu.Sun2015=}, \cite{Li.Lu.Sun2015} for more details.

For steady neutron transport equation, the exact solution can be approximated by the sum of an interior solution and a boundary layer. This type of problems
has long been believed to be satisfactorily solved since Bensoussan, Lions and Papanicolaou published their remarkable paper \cite{Bensoussan.Lions.Papanicolaou1979} in 1979. Their formulation was later extended to treat nonlinear Boltzmann equation (see \cite{Sone2002} and \cite{Sone2007}).

Unfortunately, their results are shown to be false due to lack of regularity for the boundary layer equation in \cite{AA003}. A new approach with geometric correction to the boundary layer construction has been developed to ensure regularity in the cases of disk and annulus in \cite{AA003}, \cite{AA006} and \cite{AA004}.

However, this new method fails to treat more general domains. Roughly speaking, we have two contradictory goals to achieve:
\begin{itemize}
\item
To prove diffusive limit, the remainder estimates require higher-order regularity estimate of the boundary layer.
\item
The geometric correction in the boundary layer equation is related to the curvature of the boundary curve, which prevents regularity estimates.
\end{itemize}
In \cite{AA007} and \cite{AA009}, the argument is pushed from both sides. Using delicate estimates along the characteristics in the mild formulation, the authors prove the weighted $W^{1,\infty}$ estimates of the boundary layer. Also, the remainder estimates are improved based on a non-standard energy method and a stationary $L^{2m}-L^{\infty}$ framework. Eventually, the diffusive limit is proved with a non-Hilbert expansion.

As for the unsteady neutron transport equation, things become much more complicated. Traditionally, it is believed that the exact solution can be approximated by the sum of an interior solution, an initial layer, a boundary layer, and further an initial-boundary layer due to the interaction of previous two layers. The construction of the initial-boundary layer relies on the analysis of so-called evolution Milne problem, which is not done even in 1D case.

In \cite{AA005}, based on a detailed analysis of the compatibility condition of the initial data and boundary data, it is shown that the leading-order initial-boundary layer is absent and the diffusive limit is achievable in the cases of disk and annulus. Similar to the steady problems, in more general domains, this approach does not work.

\subsection{Major Difficulties and Methods}

In this paper, we extend the results for unsteady neutron transport equation to treat general 2D convex domains. Basically, the proof relies on an innovative combination of almost all the techniques above and a careful design of asymptotic expansions.
It mainly consists of the following steps:\\
\ \\
Step 1: Interior Solution Expansion.\\
This step is classical and we use Hilbert's expansion to derive the diffusion equation. However, the expansion does not satisfy the initial data and boundary data of $u^{\e}$, so we need initial layer and boundary layer corrections.\\
\ \\
Step 2: Initial Layer Expansion.\\
Here, we utilize the idea in \cite{AA005} to construct the initial layer based on a hierarchy of ordinary differential equations. This step is standard.\\
\ \\
Step 3: Boundary Layer Expansion:\\
This is the core of \cite{AA003} and \cite{AA007}. We abandon the classical expansion based on the flat Milne problem and introduce the $\e$-Milne problem with geometric correction. As \cite{AA007} pointed out, we can show the weighted $W^{1,\infty}$ estimate of the leading-order boundary layer, but it is impossible to obtain higher regularity (like $W^{2,\infty}$ estimates). {\bf This is the key reason to improve the remainder estimates.}\\
\ \\
Step 4: Initial-Boundary Layer Expansion:\\
\cite{AA005} proposed the construction of the initial-boundary layer in the in-flow boundary case. In this paper, we prove that this argument can be recovered in the diffusive boundary case and the leading-order initial-boundary layer is eliminated.\\
\ \\
{\bf Step 1 - Step 4 is relatively standard based on our previous results in \cite{AA003}, \cite{AA006}, \cite{AA005}, \cite{AA007}, \cite{AA008}. Our major contribution focuses on the next step of remainder estimates.}\\
\ \\
Step 5: Improved Remainder Estimates.\\
This step is based on the application of $L^{2m}-L^{\infty}$ framework to time-dependent transport equations
\begin{eqnarray}\label{intemp 1}
\left\{
\begin{array}{l}
\e^2\dt u+\e \vw\cdot\nabla_x u+u-\bar
u=\ss\ \ \ \text{for}\ \
(t,\vx,\vw)\in[0,\infty)\times\Omega\times\s^1,\\\rule{0ex}{1.0em}
u(0,\vx,\vw)=\h(\vx,\vw)\ \ \text{for}\ \ (\vx,\vw)\in\Omega\times\s^1\\\rule{0ex}{1.0em}
u(t,\vx_0,\vw)-\pp[u](t,\vx_0)=\g(t,\vx_0,\vw)\ \ \text{for}\ \ t\in[0,\infty),\ \ \vx_0\in\p\Omega\ \ \text{and}\ \ \vw\cdot\vn<0,
\end{array}
\right.
\end{eqnarray}
The main idea is to introduce a special test function in weak formulation to treat kernel $\bar u$ and non-kernel parts $u-\bar u$ separately, which yields $\nm{u}_{L^2}$ or $\nm{u}_{L^{2m}}$ estimates, and further improve it to $\nm{u}_{L^{\infty}}$ estimate by a modified double Duhamel's principle with a delicate bootstrapping argument. The major difficulty includes:
\begin{itemize}
\item
{\bf Diffusive Boundary:} A direct energy estimate in (\ref{intemp 1}) and the application of Cauchy's inequality imply
\begin{eqnarray}\label{intemp 2}
&&\frac{\e^2}{2}\nm{u(t)}_{L^2(\Omega\times\s^1)}^2+\frac{\e}{2}\nm{(1-\pp)[u]}^2_{L^2([0,t)\times\Gamma^+)}
+\nm{u-\bar
u}_{L^2([0,t)\times\Omega\times\s^1)}^2\\
&\leq&C\bigg(\iint_{[0,t)\times\Omega\times\s^1}\ss u+\frac{\e^2}{2}\nm{\h}_{L^2(\Omega\times\s^1)}^2+\nm{\g}_{L^2([0,t)\times\Gamma^-)}^2
+\e^2\nm{\pp[u]}^2_{L^2([0,t)\times\Gamma^+)}\bigg).\no
\end{eqnarray}
Note that we cannot obtain the estimate of $\nm{\pp[u]}^2_{L^2([0,t)\times\Gamma^+)}$ from weak formulation itself regardless of the test functions. Here, we utilize an intricate grazing estimate in Lemma \ref{well-posedness lemma 2} to bound $\pp[u]$ through the equation (\ref{intemp 1}) in $L^1$ estimates. This is done in Step 3 of the proof of Theorem \ref{LT estimate}.
\item
{\bf Time Derivative:} The $\dt u$ term is harmless in the energy estimate (\ref{intemp 2}), but becomes a big headache in estimating the kernel $\bar u$. Here, the central idea is to choose a special test function related to $\bar u$ to delicately create $\nm{\bar
u}_{L^2([0,t)\times\Omega\times\s^1)}$ and bound it in term of all the other terms. Note that now $\dt u$ term is on the right-hand side of the inequality, not the left-hand side, and has the shape $\nm{\dt\bar u}_{L^2([0,t)\times\Omega\times\s^1)}$.

Here, we utilize an argument based on the temporal difference quotients and locally time-independent test functions to extract the information of $\nm{\dt\bar u}_{L^2(\Omega\times\s^1)}$. This is done in Step 5 of the proof of Theorem \ref{LT estimate}. It is highly non-trivial and we pay the price to lose powers of $\e$. The scenario becomes extremely worse in $L^{2m}$ estimate. We resort to the interpolation estimates and Young's inequality to reduce the power loss. {\bf This is the key reason why the unsteady estimate in Theorem \ref{LN estimate} is weaker than the similar estimates for steady problems in \cite{AA007}.}
\end{itemize}
Due to above difficulties, though the general framework here is similar to that of \cite{AA005} and \cite{AA007}, we have to start from scratch to present the delicate new terms in detail.

\subsection{Main Result}

\begin{theorem}\label{main}
Assume $g(t,\vx_0,\vw)\in C^2([0,\infty)\times\Gamma^-)$ and
$h(\vx,\vw)\in C^2(\Omega\times\s^1)$. Also, there exists $K_0>0$ such that $\ue^{K_0t}g(t,\vx_0,\vw)\in L^{\infty}([0,\infty)\times\Gamma^-)$. Then the unsteady neutron transport
equation (\ref{transport}) has a unique solution
$u^{\e}(t,\vx,\vw)\in L^{\infty}([0,\infty)\times\Omega\times\s^1)$
satisfying that for some $0<K\leq K_0$,
\begin{eqnarray}\label{main theorem 2}
\lim_{\e\rt0}\nm{\ue^{Kt}\Big(u^{\e}-\u_0-\ub_{I,0}-\ub_{B,0}\Big)}_{L^{\infty}([0,\infty)\times\Omega\times\s^1)}=0,
\end{eqnarray}
where $\u_0$ is the interior solution, $\ub_{I,0}$ is the initial layer, and $\ub_{B,0}$ is the boundary layer.
In particular, the interior solution $\u_0(t,\vx)$ satisfies the heat equation with Neumann boundary condition
\begin{eqnarray}
\left\{
\begin{array}{l}
\dt\u_0-\dfrac{1}{2}\Delta_x\u_0=0\ \ \text{for}\ \ (t,\vx)\in[0,\infty)\times\Omega,\\\rule{0ex}{2em}
\u_0(0,\vx)=\displaystyle\frac{1}{2\pi}\int_{\s^1}h(\vx,\vw)\ud{\vw}\
\ \text{for}\ \ \vx\in\Omega,\\\rule{0ex}{2em}
\dfrac{\p\u_0}{\p\vn}(t,\vx_0)=-\dfrac{1}{\pi}\displaystyle
\int_{\vw\cdot\vn<0}g(t,\vx_0,\vw)(\vw\cdot\vn)\ud{\vw}\ \ \text{for}\ \
(t,\vx_0)\in[0,\infty)\times\p\Omega,
\end{array}
\right.
\end{eqnarray}
the initial layer
\begin{eqnarray}
\ub_{I,0}(t,\vx,\vw)=\ue^{-\frac{t}{\e^2}}\left(h(\vx,\vw)-\frac{1}{2\pi}\int_{\s^1}h(\vx,\vw)\ud{\vw}\right),
\end{eqnarray}
and the boundary layer $\ub_{B,0}=0$.
\end{theorem}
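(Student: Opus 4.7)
The plan is to establish (\ref{main theorem 2}) by constructing an asymptotic expansion of $u^\e$ to sufficiently high order in $\e$ and then controlling the remainder with the $L^{2m}$--$L^\infty$ machinery outlined in Step 5 of the introduction. Write
\begin{equation*}
u^\e \;=\; \sum_{k=0}^{N}\e^k\bigl(\u_k+\ub_{I,k}+\ub_{B,k}+\ub_{IB,k}\bigr)+\e^{N_0}R^\e,
\end{equation*}
where $N_0$ is taken large. Plugging the interior Hilbert ansatz $\u=\sum\e^k\u_k$ into the bulk equation and matching orders forces $\u_0=\bu_0$, and at order $\e^2$ yields $\dt\u_0-\tfrac12\Delta_x\u_0=0$. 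The initial condition is obtained by averaging $h$ in $\vw$, while the Neumann condition is the solvability condition of the $\e$-Milne problem with geometric correction imposed by the boundary data, producing $\p\u_0/\p\vn=-\tfrac{1}{\pi}\int_{\vw\cdot\vn<0}g\,(\vw\cdot\vn)\,\ud{\vw}$.

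Since $\u_0(0,\vx)$ cannot match the $\vw$-dependent initial data $h$, I introduce the initial layer
\begin{equation*}
\ub_{I,0}(t,\vx,\vw)\;=\;\ue^{-t/\e^2}\Bigl(h(\vx,\vw)-\tfrac{1}{2\pi}\int_{\s^1}h(\vx,\vw')\,\ud{\vw'}\Bigr),
\end{equation*}
which solves $\e^2\dt\ub_{I,0}+\ub_{I,0}-\bub_{I,0}=0$ with vanishing $\vw$-average and exact compensation of the non-kernel part of $h$. For the boundary correction I would set up the $\e$-Milne problem with geometric correction as in \cite{AA003} and \cite{AA007}; by (\ref{compatibility condition}) together with the argument of \cite{AA005}, the leading-order boundary-layer and initial-boundary-layer contributions cancel, so $\ub_{B,0}=0$ and $\ub_{IB,0}=0$. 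Higher-order correctors $\u_k$, $\ub_{I,k}$, $\ub_{B,k}$, $\ub_{IB,k}$ are then constructed recursively so that the residual source, initial data, and boundary data of the remainder equation are $O(\e^{N_0})$ in the norms relevant to Step 5, with the weighted $W^{1,\infty}$ bounds on the boundary layer supplying the control of the transport terms.

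The remainder $R^\e$ then satisfies an equation of exactly the form (\ref{intemp 1}) with small source $\ss$, initial data $\h$, and boundary data $\g$. I would invoke Theorem \ref{LT estimate} for an $L^2$ bound, upgrade to $L^{2m}$, and finally apply the bootstrap of Theorem \ref{LN estimate} (modified double Duhamel's principle with grazing control) to pass to $L^\infty$. Multiplying through by $\ue^{Kt}$ for some $0<K\le K_0$ and choosing $N_0$ large enough that the $\e$-gain from the residual dominates the $\e$-loss produced in Step 5, the weighted $L^\infty$ norm of $R^\e$ tends to zero, and (\ref{main theorem 2}) follows.

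The main obstacle is Step 5 itself, specifically producing the $L^2$ estimate of the kernel $\bar u$ in the presence of the diffusive boundary and the time derivative. The weak formulation (\ref{intemp 2}) leaves $\pp[u]$ uncontrolled on $\Gamma^+$, and the natural test function that would generate $\bar u$ converts $\dt u$ from a harmless boundary term into a bulk source $\nm{\dt\bar u}_{L^2}$. Handling both requires, on the one hand, the grazing bound of Lemma \ref{well-posedness lemma 2} to recover $\pp[u]$ from the mild formulation, and on the other hand, temporal difference quotients paired with locally time-independent test functions to extract $\dt\bar u$; interpolation and Young's inequality then minimize the $\e$-loss when the argument is propagated to $L^{2m}$. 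Ensuring that the cumulative $\e$-loss is absorbed by the chosen order $N_0$ of the expansion is the most delicate balancing act.
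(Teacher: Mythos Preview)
Your plan has a genuine structural gap: you propose to take the expansion order $N_0$ ``large enough that the $\e$-gain from the residual dominates the $\e$-loss produced in Step 5,'' and to construct higher-order correctors $\u_k,\ub_{I,k},\ub_{B,k},\ub_{IB,k}$ recursively to that order. This is precisely what the paper \emph{cannot} do, and what the entire $L^{2m}$--$L^\infty$ machinery is designed to circumvent. The boundary layer with geometric correction admits only weighted $W^{1,\infty}$ regularity (Theorem \ref{Milne tangential.} gives $\lnnm{\ue^{K\eta}\partial_\tau(f-f_L)}\le C\abs{\ln\e}^8$, and no higher tangential derivatives are available). Constructing $\ub^B_2$ requires $\partial_\tau\ub^B_1$ as a source in the Milne problem, and then controlling the residual at the next order would require $\partial_\tau\ub^B_2$, which is out of reach. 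So the recursion you describe stalls after one step; you cannot trade expansion order for remainder slack.

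The paper instead fixes a \emph{low} order expansion---interior to order $2$, initial and boundary layers to order $1$, and no initial-boundary layer beyond observing that the zeroth-order one vanishes---and then invests all the effort in sharpening the remainder estimate (Theorem \ref{LI estimate}) so that the resulting $\e$-loss is mild enough to be beaten by the $O(\e^{5/2}\abs{\ln\e}^8)$ and $O(\e^{3})$ residuals that this fixed truncation produces. The arithmetic at the end of the proof of Theorem \ref{diffusive limit} is tight: one gets exactly $C\e^{1/2}$, not an arbitrarily high power. Your identification of the two analytic difficulties (grazing control of $\pp[u]$ and the $\dt\bar u$ term) is correct, but you should reorganize the argument so that the improved remainder estimate is the centerpiece that compensates for the \emph{unavailability} of a high-order expansion, rather than a tool to be paired with one.
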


\subsection{Notation and Structure}

Throughout this paper, $C>0$ denotes a universal constant which does not depend on the data and
can change from one inequality to another.
When we write $C(z)$, it means a certain positive constant depending
on the quantity $z$.

Our paper is organized as follows: in Section 2, we present the asymptotic analysis of the equation (\ref{transport}) and prove the diffusive limit, i.e. Theorem \ref{main};  in Section 3, we prove the regularity estimates of the $\e$-Milne problem with geometric correction; finally, in Section 4, we prove the estimate of remainder equation, which constitutes the major upshot of this paper.

\section{Asymptotic Analysis}

In this section, we will present the construction of the interior solution, initial layer, and boundary layer. Also, we will show the diffusive limit as $\e\rt0$.

\subsection{Interior Expansion}

We define the interior expansion as follows:
\begin{eqnarray}\label{interior expansion}
\u(t,\vx,\vw)\sim\sum_{k=0}^{2}\e^k\u_k(t,\vx,\vw),
\end{eqnarray}
where $\u_k$ can be defined by comparing the order of $\e$ via
plugging (\ref{interior expansion}) into the equation
(\ref{transport}). Thus, we have
\begin{eqnarray}
\u_0-\bu_0&=&0,\label{expansion temp 1}\\
\u_1-\bu_1&=&-\vw\cdot\nx\u_0,\label{expansion temp 2}\\
\u_2-\bu_2&=&-\dt\u_0-\vw\cdot\nx\u_1.\label{expansion temp 3}
\end{eqnarray}
Plugging (\ref{expansion temp 1}) into (\ref{expansion temp 2}), we
obtain
\begin{eqnarray}
\u_1=\bu_1-\vw\cdot\nx\bu_0.\label{expansion temp 4}
\end{eqnarray}
Plugging (\ref{expansion temp 4}) into (\ref{expansion temp 3}), we
get
\begin{eqnarray}\label{expansion temp 13}
\u_2-\bu_2&=&-\dt\u_0-\vw\cdot\nx(\bu_1-\vw\cdot\nx\bu_0)\\
&=&-\dt\u_0-\vw\cdot\nx\bu_1+w_1^2\p_{x_1x_1}\bu_0+w_2^2\p_{x_2x_2}\bu_0+2w_1w_2\p_{x_1x_2}\bu_0.\no
\end{eqnarray}
Integrating (\ref{expansion temp 13}) over $\vw\in\s^1$ and using the symmetry, we achieve
the final form
\begin{eqnarray}
\dt\bu_0-\frac{1}{2}\Delta_x\bu_0=0,
\end{eqnarray}
which further implies that $\u_0(t,\vx)$ satisfies the equation
\begin{eqnarray}\label{interior 1}
\left\{
\begin{array}{l}
\u_0=\bu_0,\\\rule{0ex}{1em}
\dt\bu_0-\dfrac{1}{2}\Delta_x\bu_0=0.
\end{array}
\right.
\end{eqnarray}
Similarly, we can derive that $\u_k(t,\vx,\vw)$ for $k=1,2$ satisfies
\begin{eqnarray}\label{interior 2}
\left\{
\begin{array}{l}
\u_k=\bu_k-\vw\cdot\nx\u_{k-1},\\\rule{0ex}{1em}
\dt\bu_k-\dfrac{1}{2}\Delta_x\bu_k=0,
\end{array}
\right.
\end{eqnarray}
Note that in order to determine $\u_k$, we need to determine the initial condition and boundary condition.

\subsection{Initial Layer Expansion}

In order to determine the initial condition for $\u_k$, we need to define the initial layer expansion. Hence, we need a substitution:\\
\ \\
Temporal Substitution:\\
We define the stretched variable $\sigma$ by making the
scaling transform for $u^{\e}(t,\vx,\vw)\rt u^{\e}(\sigma,\vx,\vw)$
with $\sigma\in [0,\infty)$ as
\begin{eqnarray}\label{substitution 0}
\sigma&=&\frac{t}{\e^2},
\end{eqnarray}
which implies
\begin{eqnarray}
\frac{\p u^{\e}}{\p t}=\frac{1}{\e^2}\frac{\p u^{\e}}{\p\sigma}.
\end{eqnarray}
In this new variable,
equation (\ref{transport}) can be rewritten as
\begin{eqnarray}\label{initial temp}
\left\{ \begin{array}{l}\displaystyle \p_{\sigma}u^{\e}+\e\vw\cdot\nabla_xu^{\e}+u^{\e}-\bar u^{\e}=0\ \ \ \text{for}\ \
(\sigma,\vx,\vw)\in[0,\infty)\times\Omega\times\s^1,\\\rule{0ex}{1.0em}
u^{\e}(0,\vx,\vw)=h(\vx,\vw)\ \ \ \text{for}\ \
(\vx,\vw)\in\Omega\times\s^1,\\\rule{0ex}{1.0em}
u^{\e}(\sigma,\vx_0,\vw)=\pp[u^{\e}](\sigma,\vx_0)+g(\sigma,\vx_0,\vw)\ \ \text{for}\ \ \sigma\in[0,\infty),\ \ \vx_0\in\p\Omega,\ \ \text{and}\ \
\vw\cdot\vn<0.
\end{array}
\right.
\end{eqnarray}
We define the initial layer expansion as follows:
\begin{eqnarray}\label{initial layer expansion}
\ub^I(\sigma,\vx,\vw)\sim\ub_{0}^I(\sigma,\vx,\vw)+\e\ub^I_{1}(\sigma,\vx,\vw),
\end{eqnarray}
where $\ub^I_{k}$ can be determined by comparing the order of $\e$ via
plugging (\ref{initial layer expansion}) into the equation
(\ref{initial temp}). Thus, we
have
\begin{eqnarray}
\p_{\sigma}\ub_{0}^I+\ub_{0}^I-\bub^I_{0}&=&0,\label{initial expansion 1}\\
\p_{\sigma}\ub^I_{1}+\ub^I_{1}-\bub^I_{1}&=&-\vw\cdot\nabla_x\ub_{0}^I.\label{initial expansion 2}
\end{eqnarray}
Integrate (\ref{initial expansion 1}) over $\vw\in\s^1$, we have
\begin{eqnarray}
\p_{\sigma}\bub_{I,0}=0.
\end{eqnarray}
which further implies
\begin{eqnarray}
\bub^I_{0}(\sigma,\vx)=\bub^I_{0}(0,\vx)\ \ \text{for}\ \ \forall\sigma\in[0,\infty).
\end{eqnarray}
Therefore, from (\ref{initial expansion 1}), we can deduce
\begin{eqnarray}
\ub_{0}^I(\sigma,\vx,\vw)&=&\ue^{-\sigma}\ub_{0}^I(0,\vx,\vw)+\int_0^{\sigma}\bub_{I,0}(s,\vx)\ue^{s-\sigma}\ud{s}\\
&=&\ue^{-\sigma}\ub_{0}^I(0,\vx,\vw)+(1-\ue^{-\sigma})\bub_{0}^I(0,\vx).\no
\end{eqnarray}
This means that we have
\begin{eqnarray}
\left\{
\begin{array}{l}
\p_{\sigma}\bub_{0}^I=0,\\\rule{0ex}{1.0em}
\ub_{0}^I(\sigma,\vx,\vw)=\ue^{-\sigma}\ub_{0}^I(0,\vx,\vw)+(1-\ue^{-\sigma})\bub_{0}^I(0,\vx).
\end{array}
\right.
\end{eqnarray}
Similarly, we can derive that $\ub_{1}^I(\sigma,\vx,\vw)$ satisfies
\begin{eqnarray}
\left\{
\begin{array}{l}
\p_{\sigma}\bub_{1}^I=-\displaystyle\int_{\s^1}\bigg(\vw\cdot\nabla_x\ub_{0}^I\bigg)\ud{\vw},\\\rule{0ex}{1.5em}
\ub_{1}^I(\sigma,\vx,\vw)=\ue^{-\sigma}\ub_{1}^I(0,\vx,\vw)+\displaystyle\int_0^{\sigma}\bigg(\bub_{1}^I-\vw\cdot\nabla_x\ub_{0}^I\bigg)(s,\vx,\vw)\ue^{s-\sigma}\ud{s}.
\end{array}
\right.
\end{eqnarray}

\subsection{Local Coordinate System}

In order to describe the boundary layer effects, we need a local coordinate system in a neighborhood of the boundary.
Assume the Cartesian coordinate system is $\vx=(x_1,x_2)$. Using polar coordinates system $(r,\theta)\in[0,\infty)\times[-\pi,\pi)$ and choosing the pole in $\Omega$, we assume $\p\Omega$ is
\begin{eqnarray}
\left\{
\begin{array}{rcl}
x_1&=&r(\theta)\cos\theta,\\
x_2&=&r(\theta)\sin\theta,
\end{array}
\right.
\end{eqnarray}
where $r(\theta)>0$ is a given function. Our local coordinate system is similar to the polar coordinate
system, but varies to satisfy the specific requirement.

In the domain near the boundary, for each $\theta$, we have the
outward unit normal vector
\begin{eqnarray}
\vn=\left(\frac{r(\theta)\cos\theta+r'(\theta)\sin\theta}{\sqrt{r(\theta)^2+r'(\theta)^2}},\frac{r(\theta)\sin\theta-r'(\theta)\cos\theta}{\sqrt{r(\theta)^2+r'(\theta)^2}}\right).
\end{eqnarray}
We can determine each
point on this normal line by $\theta$ and its distance $\mu$ to
the boundary point $\bigg(r(\theta)\cos\theta,r(\theta)\sin\theta\bigg)$ as follows:
\begin{eqnarray}\label{local}
\left\{
\begin{array}{rcl}
x_1&=&r(\theta)\cos\theta+\mu\dfrac{-r(\theta)\cos\theta-r'(\theta)\sin\theta}{\sqrt{r(\theta)^2+r'(\theta)^2}},\\\rule{0ex}{2.0em}
x_2&=&r(\theta)\sin\theta+\mu\dfrac{-r(\theta)\sin\theta+r'(\theta)\cos\theta}{\sqrt{r(\theta)^2+r'(\theta)^2}},
\end{array}
\right.
\end{eqnarray}
where $r'(\theta)=\dfrac{\ud{r}}{\ud{\theta}}$. It is easy to see that $\mu=0$ denotes the boundary $\p\Omega$ and $\mu>0$ denotes the interior of $\Omega$.

A direct computation using chain rule (see \cite{AA007} for more details) implies
\begin{eqnarray}
\frac{\p\theta}{\p x_1}=\frac{MP}{P^3+Q\mu},&\quad&
\frac{\p\mu}{\p x_1}=-\frac{N}{P},\\
\frac{\p\theta}{\p x_2}=\frac{NP}{P^3+Q\mu},&\quad&
\frac{\p\mu}{\p x_2}=\frac{M}{P},
\end{eqnarray}
where
\begin{eqnarray}
P&=&(r^2+r'^2)^{\frac{1}{2}},\\
Q&=&rr''-r^2-2r'^2,\\
M&=&-r\sin\theta+r'\cos\theta,\\
N&=&r\cos\theta+r'\sin\theta.
\end{eqnarray}
Also, the
Jacobian of the transform $(x_1,x_2)\rightarrow(\mu,\theta)$ is
\begin{eqnarray}
J&=&(r^2+(r')^2)^{1/2}+\mu\frac{rr''-r^2-2r'^2}{(r^2+r'^2)}.
\end{eqnarray}
Note for smooth convex domains, the curvature
\begin{eqnarray}
\kappa(\theta)=\frac{r^2+2r'^2-rr''}{(r^2+r'^2)^{\frac{3}{2}}},
\end{eqnarray}
and radius of curvature
\begin{eqnarray}
\rk(\theta)=\frac{1}{\kappa(\theta)}=\frac{(r^2+r'^2)^{\frac{3}{2}}}{r^2+2r'^2-rr''}.
\end{eqnarray}
In order for the transform is bijective, we require the Jacobian
$J>0$. Then it implies that $0\leq\mu<\rk(\theta)$, which is the maximum
extension of the valid domain for local coordinate system. Since
we will only use this coordinate system for the domain near the
boundary, the above analysis reveals that as long as the largest
curvature of the boundary is strictly positive and finite, which is naturally satisfied in a smooth convex domain, we can take the transform as
valid for area of $0\leq\mu<\min_{\theta}\rk(\theta)=R_{\min}$. For the unit
plate, we have $R_{\kappa}=1$ and the transform is valid for all the
points in the plate except the center.\\
\ \\
We define substitutions as follows:\\
\ \\
Substitution 1: \\
Let $u^{\e}(t,x_1,x_2,\vw)\rt u^{\e}(t,\mu,\theta,\vw)$ with
$(t,\mu,\theta,\vw)\in [0,\infty)\times[0,R_{\min})\times[-\pi,\pi)\times\s^1$ as
\begin{eqnarray}\label{substitution 1}
\left\{
\begin{array}{rcl}
x_1&=&r(\theta)\cos\theta+\mu\dfrac{-r(\theta)\cos\theta-r'(\theta)\sin\theta}{\sqrt{r(\theta)^2+r'(\theta)^2}},\\\rule{0ex}{2.0em}
x_2&=&r(\theta)\sin\theta+\mu\dfrac{-r(\theta)\sin\theta+r'(\theta)\cos\theta}{\sqrt{r(\theta)^2+r'(\theta)^2}},
\end{array}
\right.
\end{eqnarray}
and then the equation (\ref{transport}) is transformed into
\begin{eqnarray}\label{transport 1}
\left\{
\begin{array}{rcl}
&&\displaystyle\e^2\dt u^{\e}+\e\Bigg(w_1\frac{-r\cos\theta-r'\sin\theta}{(r^2+r'^2)^{\frac{1}{2}}}+w_2\frac{-r\sin\theta+r'\cos\theta}{(r^2+r'^2)^{\frac{1}{2}}}\Bigg)\frac{\p u^{\e}}{\p\mu}\\
&&\displaystyle+\e\Bigg(w_1\frac{-r\sin\theta+r'\cos\theta}{(r^2+r'^2)(1-\kappa\mu)}+w_2\frac{r\cos\theta+r'\sin\theta}{(r^2+r'^2)(1-\kappa\mu)}\Bigg)\frac{\p u^{\e}}{\p\theta}+u^{\e}-\bar u^{\e}=0,\\\rule{0ex}{1.5em}
&&u^{\e}(0,\mu,\theta,\vw)=h(\mu,\theta,\vw)\\\rule{0ex}{1.0em}
&&u^{\e}(t,0,\theta,\vw)=\pp[u^{\e}](t,0,\theta)+\e g(t,\theta,\vw)\ \ \text{for}\
\ \vw\cdot\vn<0,
\end{array}
\right.
\end{eqnarray}
where
\begin{eqnarray}
\vw\cdot\vn=w_1\frac{-r\cos\theta-r'\sin\theta}{(r^2+r'^2)^{\frac{1}{2}}}+w_2\frac{-r\sin\theta+r'\cos\theta}{(r^2+r'^2)^{\frac{1}{2}}},
\end{eqnarray}
and
\begin{eqnarray}
\pp[u^{\e}](t,0,\theta)=\frac{1}{2}\int_{\vw\cdot\vn>0}u^{\e}(t,0,\theta,\vw)(\vw\cdot\vn)\ud{\vw}.
\end{eqnarray}
Noting the fact that
\begin{eqnarray}
\left(\frac{M}{P}\right)^2+\left(\frac{N}{P}\right)^2=
\left(\frac{-r\cos\theta-r'\sin\theta}{(r^2+r'^2)^{\frac{1}{2}}}\right)^2+\left(\frac{-r\sin\theta+r'\cos\theta}{(r^2+r'^2)^{\frac{1}{2}}}\right)^2=1,
\end{eqnarray}
we can further simplify (\ref{transport 1}).\\
\ \\
Substitution 2: \\
Let $u^{\e}(t,\mu,\theta,\vw)\rt u^{\e}(t,\mu,\tau,\vw)$ with
$(t,\mu,\tau,\vw)\in [0,\infty)\times[0,R_{\min})\times[-\pi,\pi)\times\s^1$ as
\begin{eqnarray}\label{substitution 2}
\left\{
\begin{array}{rcl}
\sin\tau&=&\dfrac{r\sin\theta-r'\cos\theta}{(r^2+r'^2)^{\frac{1}{2}}},\\\rule{0ex}{2.0em}
\cos\tau&=&\dfrac{r\cos\theta+r'\sin\theta}{(r^2+r'^2)^{\frac{1}{2}}},
\end{array}
\right.
\end{eqnarray}
which implies
\begin{eqnarray}
\frac{\ud{\tau}}{\ud{\theta}}=\kappa(r^2+r'^2)^{\frac{1}{2}}>0,
\end{eqnarray}
and then the equation (\ref{transport}) is transformed into
\begin{eqnarray}\label{transport 2}
\left\{
\begin{array}{l}\displaystyle
\e^2\dt u^{\e}-\e\left(w_1\cos\tau+w_2\sin\tau\right)\frac{\p
u^{\e}}{\p\mu}-\frac{\e}{\rk-\mu}\left(w_1\sin\tau-w_2\cos\tau\right)\frac{\p
u^{\e}}{\p\tau}+u^{\e}-\bar u^{\e}=0,\\\rule{0ex}{1.0em}
u^{\e}(0,\mu,\tau,\vw)=h(\mu,\tau,\vw)\\\rule{0ex}{1.0em}
u^{\e}(t,0,\tau,\vw)=\pp[u^{\e}](t,0,\tau)+\e g(t,\tau,\vw)\ \
\text{for}\ \ w_1\cos\tau+w_2\sin\tau<0,
\end{array}
\right.
\end{eqnarray}
where
\begin{eqnarray}
\pp[u^{\e}](t,0,\tau)=\half\int_{w_1\cos\tau+w_2\sin\tau>0}u^{\e}(t,0,\tau,\vw)(\vw\cdot\vn)\ud{\vw}.
\end{eqnarray}
Since $\tau$ denotes the angle of normal vector, the domain of $\tau$ is the same as $\theta$, i.e. $[-\pi,\pi)$.

\subsection{Boundary Layer Expansion with Geometric Correction}

Using the idea in \cite{AA003} and \cite{AA007}, in order to define boundary layer, we need several more substitutions:\\
\ \\
Substitution 3:\\
We further make the scaling transform for $u^{\e}(t,\mu,\tau,\vw)\rt
u^{\e}(t,\eta,\tau,\vw)$ with $(t,\eta,\tau,\vw)\in
[0,\infty)\times[0,R_{\min}\e^{-1})\times[-\pi,\pi)\times\s^1$ as
\begin{eqnarray}\label{substitution 3}
\eta&=&\dfrac{\mu}{\e},
\end{eqnarray}
which implies
\begin{eqnarray}
\frac{\p u^{\e}}{\p\mu}=\frac{1}{\e}\frac{\p u^{\e}}{\p\eta}.
\end{eqnarray}
Then equation (\ref{transport}) is transformed into
\begin{eqnarray}\label{transport 3}
\left\{\begin{array}{l}\displaystyle
\e^2\dt u^{\e}-\bigg(w_1\cos\tau+w_2\sin\tau\bigg)\frac{\p
u^{\e}}{\p\eta}-\frac{\e}{\rk-\e\eta}\bigg(w_1\sin\tau-w_2\cos\tau\bigg)\frac{\p
u^{\e}}{\p\tau}+u^{\e}-\bar u^{\e}=0,\\\rule{0ex}{1.0em}
u^{\e}(0,\eta,\tau,\vw)=h(\eta,\tau,\vw)\\\rule{0ex}{1.0em}
u^{\e}(t,0,\tau,\vw)=\pp[u^{\e}](t,0,\tau)+\e g(t,\tau,\vw)\ \
\text{for}\ \ w_1\cos\tau+w_2\sin\tau<0,
\end{array}
\right.
\end{eqnarray}
where
\begin{eqnarray}
\pp[u^{\e}](t,0,\tau)=\half\int_{w_1\cos\tau+w_2\sin\tau>0}u^{\e}(t,0,\tau,\vw)(\vw\cdot\vn)\ud{\vw}.
\end{eqnarray}
\ \\
Substitution 4:\\
Define the velocity substitution for $u^{\e}(t,\eta,\tau,\vw)\rt
u^{\e}(t,\eta,\tau,\xi)$ with $(t,\eta,\tau,\xi)\in
[0,\infty)\times[0,R_{\min}\e^{-1})\times[-\pi,\pi)\times[-\pi,\pi)$ as
\begin{eqnarray}\label{substitution 4}
\left\{
\begin{array}{rcl}
w_1&=&-\sin\xi\\
w_2&=&-\cos\xi
\end{array}
\right.
\end{eqnarray}
We have the succinct form
\begin{eqnarray}\label{transport 4}
\left\{\begin{array}{l}\displaystyle \e^2\dt u^{\e}+\sin(\tau+\xi)\frac{\p
u^{\e}}{\p\eta}-\frac{\e}{R_{\kappa}-\e\xi}\cos(\tau+\xi)\frac{\p
u^{\e}}{\p\tau}+u^{\e}-\bar u^{\e}=0,\\\rule{0ex}{1.0em}
u^{\e}(0,\eta,\tau,\xi)=h(\eta,\tau,\xi)\\\rule{0ex}{1.0em}
u^{\e}(t,0,\tau,\xi)=\pp[u^{\e}](t,0,\tau)+\e g(t,\tau,\xi),\ \ \text{for}\ \
\sin(\tau+\xi)>0,
\end{array}
\right.
\end{eqnarray}
where
\begin{eqnarray}
\pp[u^{\e}](t,0,\tau)=-\half\int_{\sin(\tau+\xi)<0}u^{\e}(t,0,\tau,\xi)\sin(\tau+\xi)\ud{\xi}.
\end{eqnarray}
\ \\
Substitution 5:\\
Finally, we make the substitution for $u^{\e}(t,\eta,\tau,\xi)\rt
u^{\e}(t,\eta,\tau,\phi)$ with $(t,\eta,\tau,\phi)\in
[0,\infty)\times[0,R_{\min}\e^{-1})\times[-\pi,\pi)\times[-\pi,\pi)$ as
\begin{eqnarray}\label{substitution 5}
\phi&=&\tau+\xi,
\end{eqnarray}
and achieve the form
\begin{eqnarray}\label{transport temp}
\left\{\begin{array}{l}\displaystyle \e^2\dt u^{\e}+\sin\phi\frac{\p
u^{\e}}{\p\eta}-\frac{\e}{\rk-\e\eta}\cos\phi\bigg(\frac{\p
u^{\e}}{\p\phi}+\frac{\p
u^{\e}}{\p\tau}\bigg)+u^{\e}-\bar u^{\e}=0\\\rule{0ex}{1.0em}
u^{\e}(0,\eta,\tau,\phi)=h(\eta,\tau,\phi)\\\rule{0ex}{1.0em}
u^{\e}(t,0,\tau,\phi)=\pp u^{\e}(t,0,\tau)+\e g(t,\tau,\phi)\ \ \text{for}\ \
\sin\phi>0,
\end{array}
\right.
\end{eqnarray}
where
\begin{eqnarray}
\pp[u^{\e}](t,0,\tau)=-\half\int_{\sin\phi<0}u^{\e}(t,0,\tau,\phi)\sin\phi\ud{\phi}.
\end{eqnarray}
\ \\
We define the boundary layer solution expansion as follows:
\begin{eqnarray}\label{boundary layer expansion}
\ub^B(t,\eta,\tau,\phi)=\ub^B_{0}(t,\eta,\tau,\phi)+\e\ub^B_{1}(t,\eta,\tau,\phi)
\end{eqnarray}
where $\ub^B_k$ can be defined by comparing the order of $\e$ via
plugging (\ref{boundary layer expansion}) into the equation
(\ref{transport temp}). Thus, in a neighborhood of the boundary, we have
\begin{eqnarray}
\sin\phi\frac{\p \ub^B_{0}}{\p\eta}-\frac{\e}{\rk -\e\eta}\cos\phi\frac{\p
\ub^B_{0}}{\p\phi}+\ub^B_{0}-\bub^B_{0}&=&0,\label{expansion temp 6}\\
\sin\phi\frac{\p \ub^B_{1}}{\p\eta}-\frac{\e}{\rk -\e\eta}\cos\phi\frac{\p
\ub^B_{1}}{\p\phi}+\ub^B_{1}-\bub^B_{1}&=&\frac{1}{\rk -\e\eta}\cos\phi\frac{\p
\ub^B_{0}}{\p\tau},\label{expansion temp 7}
\end{eqnarray}
where
\begin{eqnarray}
\bub^B_{k}(t,\eta,\tau)=\frac{1}{2\pi}\int_{-\pi}^{\pi}\ub^B_{k}(t,\eta,\tau,\phi)\ud{\phi}.
\end{eqnarray}

\subsection{Initial-Boundary Layer Expansion}

Above construction of initial layer and boundary layer yields an interesting fact that at the corner point $(t,\vx)=(0,\vx_0)$ for $\vx_0\in\p\Omega$, the initial layer starting from this point has a contribution on the boundary data, and the boundary layer starting from this point has a contribution on the initial data. Therefore, we have to find some additional functions to compensate for these effects.

The classical theory of asymptotic analysis requires the so-called initial-boundary layer, where the temporal scaling and spacial rescaling should be used simultaneously.


Fortunately, the initial and boundary data satisfy the
compatibility condition
\begin{eqnarray}
h(\vx_0,\vw)=\pp[h](\vx_0)+\e g(0,\vx_0,\vw)\ \ \text{for}\ \ \vx_0\in\p\Omega\ \ \text{and}\ \ \vw\cdot\vn<0.
\end{eqnarray}
Since $0<\e<<1$ can be arbitrary, comparing the order of $\e$, we must have
\begin{eqnarray}
h(\vx_0,\vw)&=&\pp[h](\vx_0),\\
g(0,\vx_0,\vw)&=&0,
\end{eqnarray}
for $\vx_0\in\p\Omega$ and $\vw\cdot\vn<0$, which further implies that $u^{\e}(0,\vx_0,\vw)=h(\vx_0,\vw)=C(\vx_0)$ is a constant that only depends on $\vx_0$.

On the other hand,  in the half-space $\vw\cdot\vn<0$ at $(0,\vx_0,\vw)$, the equation
\begin{eqnarray}
\e^2\dt u^{\e}+\e \vw\cdot\nabla_x u^{\e}+u^{\e}-\bar
u^{\e}&=&0,
\end{eqnarray}
is valid, which implies
\begin{eqnarray}
\e^3\dt g(0,\vx_0,\vw)+\e \vw\cdot\nabla_xh(\vx_0,\vw)+h(\vx_0,\vw)-\bar
h(\vx_0)&=&0.
\end{eqnarray}
Since $0<\e<<1$ can be arbitrary, we must have for $\vw\cdot\vec n<0$,
\begin{eqnarray}
\dt g(0,\vx_0,\vw)&=&0,\\
\vw\cdot\nabla_xh(\vx_0,\vw)&=&0,\\
h(\vx_0,\vw)-\bar
h(\vx_0)&=&0.
\end{eqnarray}
The above relations imply the improved compatibility condition
\begin{eqnarray}\label{improved compatibility condition}
h(\vx_0,\vw)=\pp[h](\vx_0)=\bar h(\vx_0),\ \ \text{and}\ \ \ g(0,\vx_0,\vw)=0,\ \text{for}\ \ \vw\cdot\vec n<0.
\end{eqnarray}
This fact is of great importance in the following analysis.

Then it is easy to check that the leading-order boundary layer must be zero, i.e.
\begin{eqnarray}
\ub^B_{0}(0,\eta,\tau,\phi)&=&0.
\end{eqnarray}
The leading-order initial layer is zero for $\vw\cdot\vn<0$, but is not necessarily zero for $\vw\cdot\vn>0$. However, above improved compatibility condition implies that
\begin{eqnarray}
\ub_0^I(\sigma,\vx_0,\vw)=\pp[\ub_0^I](\sigma,\vx_0),
\end{eqnarray}
which means that it has no effect on the remainder. Therefore, the leading-order initial-boundary layer is absent.

\subsection{Construction of Asymptotic Expansion}

The bridge between the interior solution, the initial layer, and the boundary layer,
is the initial and boundary condition of equation (\ref{transport}). To avoid the introduction of higher order initial-boundary layer, we only require the zeroth-order expansion of initial and first order expansion of boundary data be satisfied, i.e. we have
\begin{eqnarray}
\u_0(0,\vx,\vw)+\ub_{0}^I(0,\vx,\vw)&=&h(\vx,\vw),\\
\no\\
\u_0(t,\vx_0,\vw)+\ub^B_{0}(t,\tau,\phi)&=&\pp[\u_0(t,\vx_0,\vw)+\ub^B_{0}(t,\tau,\phi)],\\
\u_1(t,\vx_0,\vw)+\ub^B_{1}(t,\tau,\phi)&=&\pp[\u_1(t,\vx_0,\vw)+\ub^B_{1}(t,\tau,\phi)]+g(t,\tau,\phi).
\end{eqnarray}
Note the fact that $\bu_k=\pp[\bu_k]$, we can simplify above:
\begin{eqnarray}
\u_0(0,\vx,\vw)+\ub_{0}^I(0,\vx,\vw)&=&h(\vx,\vw),\\
\no\\
\ub^B_{0}(t,\tau,\phi)-\pp[\ub^B_{0}(t,\tau,\phi)]&=&0,\\
\ub^B_{1}(t,\tau,\phi)-\pp[\ub^B_{1}(t,\tau,\phi)]&=&\Big(\vw\cdot\u_0-\pp[\vw\cdot\u_0]\Big)(t,\vx_0,\vw)+g(t,\tau,\phi).
\end{eqnarray}
The construction of $\u_k$, $\ub^I_k$ and $\ub_k^B$ are as follows:\\
\ \\
Step 0: Preliminaries.\\
Assume the cut-off function $\Upsilon\in C^{\infty}[0,\infty)$ is defined as
\begin{eqnarray}\label{cut-off 2}
\Upsilon(\mu)=\left\{
\begin{array}{ll}
1&0\leq\mu\leq\dfrac{1}{4}R,\\
0&\dfrac{1}{2}R\leq\mu\leq\infty.
\end{array}
\right.
\end{eqnarray}
where
\begin{eqnarray}
R_{\min}=\min_{\tau}\rk(\tau).
\end{eqnarray}
Also, we define the force as
\begin{eqnarray}\label{force}
F(\eta,\tau)=-\frac{\e}{\rk(\tau)-\e\eta},
\end{eqnarray}
in the boundary layer length $L=R_{\min}\e^{-\frac{1}{2}}$. The reflexive boundary is
\begin{eqnarray}
\rr[\phi]=-\phi.
\end{eqnarray}
\ \\
Step 1: Construction of $\ub^B_{0}$.\\
Define the zeroth-order boundary layer as
\begin{eqnarray}\label{expansion temp 9}
\left\{
\begin{array}{l}
\ub^B_{0}(t,\eta,\tau,\phi)=\Upsilon(\e^{\frac{1}{2}}\eta)\bigg(f_0(t,\eta,\tau,\phi)-f_{0,L}(t,\tau)\bigg),\\\rule{0ex}{1.5em}
\sin\phi\dfrac{\p f_0}{\p\eta}+F(\eta,\tau)\cos\phi\dfrac{\p
f_0}{\p\phi}+f_0-\bar f_0=0,\\\rule{0ex}{1.5em}
f_0(t,0,\tau,\phi)=\pp[f_0](t,0,\tau)\ \ \text{for}\ \
\sin\phi>0,\\\rule{0ex}{1.5em}
f_0(t,L,\tau,\phi)=f_0(t,L,\tau,\rr[\phi]),
\end{array}
\right.
\end{eqnarray}
with the normalization condition
\begin{eqnarray}
\pp[f_0](t,0,\tau)=0.
\end{eqnarray}
By Theorem \ref{Milne theorem 2.}, $\ub^B_{0}$ is well-defined and
\begin{eqnarray}
f_{0,L}(t,\tau)=\frac{1}{\pi}\ds\int_{-\pi}^{\pi}f(t,L,\tau,\phi)\sin^2\phi\ud{\phi}.
\end{eqnarray}
It is obvious to see $f_0=f_{0,L}=0$ is the only solution.\\
\ \\
Step 2: Construction of $\ub_{0}^I$.\\
Define the zeroth-order initial layer as
\begin{eqnarray}\label{expansion temp 21}
\left\{
\begin{array}{l}
\ub_{0}^I(\sigma,\vx,\vw)=\ff_0(\sigma,\vx,\vw)-\ff_0(\infty,\vx)\\\rule{0ex}{1.0em}
\p_{\sigma}\bar\ff_0=0,\\\rule{0ex}{1.0em}
\ff_0(\sigma,\vx,\vw)=\ue^{-\sigma}\ff_0(0,\vx,\vw)+(1-\ue^{-\sigma})\bar\ff_0(0,\vx),\\\rule{0ex}{1.0em}
\ff_0(0,\vx,\vw)=h(\vx,\vw),\\\rule{0ex}{1.0em}
\lim_{\sigma\rt\infty}\ff_0(\sigma,\vx,\vw)=\ff_0(\infty,\vx).
\end{array}
\right.
\end{eqnarray}
We may directly solve that $\ff_0(\infty,\vx)=\bar\ff_0(0,\vx)=\dfrac{1}{2\pi}\ds\int_{\s^1}h(\vx,\vw)\ud{\vw}$. Hence,
we know $\ub_{0}^I(\sigma,\vx,\vw)=\ue^{-\sigma}\left(h(\vx,\vw)-\dfrac{1}{2\pi}\ds\int_{\s^1}h(\vx,\vw)\ud{\vw}\right)$. It is easy to see that
$\ub_{0}^I\in L^{\infty}$ is well-posed and can be explicitly solved.\\
\ \\
Step 3: Construction of $\ub^B_{1}$ and $\u_0$.\\
Define the first-order boundary layer as
\begin{eqnarray}\label{expansion temp 10}
\left\{
\begin{array}{l}
\ub_1(t,\eta,\tau,\phi)=\Upsilon(\e^{\frac{1}{2}}\eta)\bigg(f_1(t,\eta,\tau,\phi)-f_{1,L}(t,\tau)\bigg),\\\rule{0ex}{1.5em}
\sin\phi\dfrac{\p f_1}{\p\eta}+F(\eta,\tau)\cos\phi\dfrac{\p
f_1}{\p\phi}+f_1-\bar
f_1=\dfrac{1}{\rk-\e\eta}\cos\phi\dfrac{\p
\ub_0}{\p\tau},\\\rule{0ex}{1.5em}
f_1(t,0,\tau,\phi)=\pp
[f_1](t,0,\tau)+g_1(t,\tau,\phi)\ \ \text{for}\ \
\sin\phi>0,\\\rule{0ex}{1.5em}
f_1(t,L,\tau,\phi)=f_1(t,L,\tau,\rr[\phi]),
\end{array}
\right.
\end{eqnarray}
with the normalization condition
\begin{eqnarray}
\pp[f_1](t,0,\tau)=0,
\end{eqnarray}
and
\begin{eqnarray}
g_1(t,\tau,\phi)&=&\Big(\vw\cdot\nx\u_0(t,\vx_0)-\pp[\vw\cdot\nx\u_0](t,\vx_0)\Big)+g(t,\tau,\phi),
\end{eqnarray}
where $\vx_0$ is the same boundary point as $(0,\tau)$ and
\begin{eqnarray}
\vw&=&(-\sin(\phi-\tau),-\cos(\phi-\tau)),\\
\vn&=&(\cos\tau,\sin\tau).
\end{eqnarray}
To solve (\ref{expansion temp 10}), we require the compatibility
condition (\ref{Milne compatibility condition}) for the boundary
data
\begin{eqnarray}
\int_{\sin\phi>0}\bigg(g(t,\tau,\phi)+\vw\cdot\nx\u_0(t,\vx_0)-\pp[\vw\cdot\nx\u_0](t,\vx_0)\bigg)\sin\phi\ud{\phi}=0.
\end{eqnarray}
Note the fact
\begin{eqnarray}
&&\int_{\sin\phi>0}\bigg(\vw\cdot\nx\u_0(t,\vx_0)-\pp[\vw\cdot\nx\u_0](t,\vx_0)\bigg)\sin\phi\ud{\phi}\\
&=&
\int_{\sin\phi>0}\Big(\vw\cdot\nx\u_0(t,\vx_0)\Big)\sin\phi\ud{\phi}-2\pp[\vw\cdot\nx\u_0](t,\vx_0)\nonumber\\
&=&\int_{\sin\phi>0}\Big(\vw\cdot\nx\u_0(t,\vx_0)\Big)\sin\phi\ud{\phi}+\int_{\sin\phi<0}\Big(\vw\cdot\nx\u_0(t,\vx_0)\Big)\sin\phi\ud{\phi}\nonumber\\
&=&\int_{-\pi}^{\pi}\Big(\vw\cdot\nx\u_0(t,\vx_0)\Big)\sin\phi\ud{\phi}\nonumber\\
&=&-\pi\nx\bu_0(t,\vx_0)\cdot\vn=-\pi\frac{\p\bu_0(t,\vx_0)}{\p\vn}.\nonumber
\end{eqnarray}
We can simplify the compatibility condition as follows:
\begin{eqnarray}
\int_{\sin\phi>0}g(t,\tau,\phi)\sin\phi\ud{\phi}-\pi\frac{\p\bu_0(t,\vx_0)}{\p\vn}=0.
\end{eqnarray}
Then we have
\begin{eqnarray}
\frac{\p\bu_0(t,\vx_0)}{\p\vn}&=&\frac{1}{\pi}\int_{\sin\phi>0}g(t,\tau,\phi)\sin\phi\ud{\phi}.
\end{eqnarray}
Hence, we define the zeroth-order interior solution $\u_0(t,\vx,\vw)$ as
\begin{eqnarray}\label{expansion temp 11}
\left\{
\begin{array}{l}
\dt\u_0-\dfrac{1}{2}\Delta_x\u_0=0\ \ \text{for}\ \ (t,\vx)\in[0,\infty)\times\Omega,\\\rule{0ex}{2em}
\u_0(0,\vx)=\displaystyle\frac{1}{2\pi}\int_{\s^1}h(\vx,\vw)\ud{\vw}\
\ \text{for}\ \ \vx\in\Omega,\\\rule{0ex}{2em}
\dfrac{\p\u_0}{\p\vn}(t,\vx_0)=-\dfrac{1}{\pi}\displaystyle
\int_{\vw\cdot\vn<0}g(t,\vx_0,\vw)(\vw\cdot\vn)\ud{\vw}\ \ \text{for}\ \
(t,\vx_0)\in[0,\infty)\times\p\Omega.
\end{array}
\right.
\end{eqnarray}
\ \\
Step 4: Construction of $\ub_{1}^I$.\\
Define the first-order initial layer as
\begin{eqnarray}\label{expansion temp 22}
\left\{
\begin{array}{l}
\ub_{1}^I(\sigma,\vx,\vw)=\ff_1(\sigma,\vx,\vw)-\ff_1(\infty,\vx)\\\rule{0ex}{1.0em}
\p_{\sigma}\bar\ff_1=-\displaystyle\int_{\s^1}\bigg(\vw\cdot\nabla_x\ub_{0}^I\bigg)\ud{\vw},\\\rule{0ex}{1.5em}
\ff_1(\sigma,\vx,\vw)=\ue^{-\sigma}\ff_1(0,\vx,\vw)+\displaystyle\int_0^{\sigma}\bigg(\bar\ff_1-\vw\cdot\nabla_x\ub_{0}^I\bigg)(s,\vx,\vw)\ue^{s-\sigma}\ud{s},\\\rule{0ex}{1.0em}
\ff_1(0,\vx,\vw)=\vw\cdot\nx\u_0(0,\vx,\vw),\\\rule{0ex}{1.0em}
\lim_{\sigma\rt\infty}\ff_1(\sigma,\vx,\vw)=\ff_1(\infty,\vx).
\end{array}
\right.
\end{eqnarray}
This a first-order linear ordinary differential equation and we can easily see that $\ub_{1}^I\in L^{\infty}$ is well-posed.\\
\ \\
Step 5: Construction of $\u_1$.\\
We define the first-order interior solution $\u_1(t,\vx,\vw)$ as
\begin{eqnarray}\label{expansion temp 12}
\left\{
\begin{array}{l}
\u_1=\bu_1-\vw\cdot\nx\u_0\ \ \text{for}\ \ (t,\vx,\vw)\in[0,\infty)\times\Omega\times\s^1,\\\rule{0ex}{1.5em}
\dt\bu_1-\dfrac{1}{2}\Delta_x\bu_1=0\ \ \text{for}\ \ (t,\vx)\in[0,\infty)\times\Omega,\\\rule{0ex}{1.5em}
\bu_1(0,\vx)=0\ \ \text{for}\ \ \vx\in\Omega,\\\rule{0ex}{1.5em}
\dfrac{\p\bu_1}{\p\vn}=0\ \ \text{for}\ \ (t,\vx)\in[0,\infty)\times\p\Omega.
\end{array}
\right.
\end{eqnarray}
Note that here we only require the trivial initial and boundary condition since we cannot resort to the compatibility condition in $\e$-Milne problem with geometric correction. Based on \cite{AA003}, this might lead to $O(\e^2)$ error to the approximation. Since we focus on the leading-order terms, this error is acceptable.\\
\ \\
Step 6: Construction of $\u_2$.\\
By a similar fashion, we define the second order interior solution $\u_2(t,\vx,\vw)$ as
\begin{eqnarray}
\left\{
\begin{array}{l}
\u_2=\bu_2-\vw\cdot\nx\u_1\ \ \text{for}\ \ (t,\vx,\vw)\in[0,\infty)\times\Omega\times\s^1,\\\rule{0ex}{1.5em}
\dt\bu_2-\dfrac{1}{2}\Delta_x\bu_2=0\ \ \text{for}\ \ (t,\vx)\in[0,\infty)\times\Omega,\\\rule{0ex}{1.5em}
\bu_2(0,\vx)=0\ \ \text{for}\ \ \vx\in\Omega,\\\rule{0ex}{1.5em}
\dfrac{\p\bu_2}{\p\vn}=0\ \ \text{for}\ \ (t,\vx)\in[0,\infty)\times\p\Omega.
\end{array}
\right.
\end{eqnarray}
Similar to $\u_1$ case, here we only require the trivial initial and boundary condition. Based on \cite{AA003}, this might lead to $O(\e^3)$ error to the approximation. Since we focus on the leading-order terms, this error is acceptable.

\subsection{Diffusive Limit}

\begin{theorem}\label{diffusive limit}
Assume $g(t,\vx_0,\vw)\in C^2([0,\infty)\times\Gamma^-)$ and
$h(\vx,\vw)\in C^2(\Omega\times\s^1)$. Also, there exists $K_0>0$ such that $\ue^{K_0t}g(t,\vx_0,\vw)\in L^{\infty}([0,\infty)\times\Gamma^-)$. Then the unsteady neutron transport
equation (\ref{transport}) has a unique solution
$u^{\e}(t,\vx,\vw)\in L^{\infty}([0,\infty)\times\Omega\times\s^1)$
satisfying for some $0<K\leq K_0$,
\begin{eqnarray}\label{main theorem 2}
\nm{\ue^{Kt}\Big(u^{\e}-\u_0-\ub_{I,0}-\ub_{B,0}\Big)}_{L^{\infty}([0,\infty)\times\Omega\times\s^1)}\leq C\e^{\frac{1}{2}},
\end{eqnarray}
where the interior solution $\u_0$ is
defined in (\ref{expansion temp 11}), the initial layer $\ub_{0}^I$ is defined in (\ref{expansion temp 21}), and the boundary layer $\ub^B_{0}$ is defined in (\ref{expansion temp 9}).
\end{theorem}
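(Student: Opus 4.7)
My plan is to reduce the theorem to the remainder estimate framework announced for equation (\ref{intemp 1}). First, I would assemble a combined approximate solution
\begin{eqnarray*}
q^{\e}(t,\vx,\vw) = \sum_{k=0}^{2}\e^k\u_k(t,\vx,\vw) + \ub^I_0(t/\e^2,\vx,\vw) + \e\,\ub^I_1(t/\e^2,\vx,\vw) + \ub^B_0(t,\eta,\tau,\phi) + \e\,\ub^B_1(t,\eta,\tau,\phi),
\end{eqnarray*}
with the boundary layer pieces written in the local coordinates $(\eta,\tau,\phi)$ and cut off by $\Upsilon(\e^{1/2}\eta)$ as in (\ref{expansion temp 9}) and (\ref{expansion temp 10}). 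Setting $R^{\e}:=u^{\e}-q^{\e}$, I would plug $q^{\e}$ into equation (\ref{transport}) and collect terms by order in $\e$; the construction of each layer is precisely calibrated so that the leading orders cancel, leaving a residual equation of the type (\ref{intemp 1}) with source $\ss$, initial datum $\h$, and boundary datum $\g$ that I would track explicitly.

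Next, I would estimate each piece of the residual. The interior contribution produces a source of order $\e^3$ coming from $\dt\u_2$ and $\vw\cdot\nx\u_2$, hence globally small. The boundary layer contribution splits into two kinds of errors: the geometric/$\tau$-transport terms of order $\e$ multiplied by $\p_\tau \ub^B_k$, which are controlled using the weighted $W^{1,\infty}$-estimates of the $\e$-Milne problem with geometric correction; and the cut-off errors from $\Upsilon$, which are supported where $\eta\gtrsim\e^{-1/2}$ and are therefore exponentially small thanks to the Milne decay estimates of Section 3. The initial layer contribution behaves similarly: $\vw\cdot\nx\ub^I_0$ produces a source decaying like $\ue^{-t/\e^2}$, integrable against $\ud t$ with a factor $\e^2$ to spare. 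On the boundary, the construction in Step 3 was designed so that the zeroth- and first-order boundary conditions are exactly satisfied, so that $\g$ consists only of $O(\e^2)$ terms coming from $\u_2$ and $\ub^B_2$-type tails; on the initial slice, the improved compatibility condition (\ref{improved compatibility condition}) ensures that the boundary-layer contribution at $t=0$ is zero and that the remaining initial error is of order $\e$.

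To incorporate the weight $\ue^{Kt}$, I would substitute $\tilde{R}:=\ue^{Kt}R^{\e}$, which converts (\ref{intemp 1}) into the same form with an extra $-K\e^2\tilde{R}$ on the left; for $K$ small this perturbation is absorbed into the left-hand side of the energy identity (\ref{intemp 2}) without altering the structure. Then I would apply Theorem \ref{LN estimate}: the $L^2$ step feeds the residual $\ss$, $\h$, $\g$ into the weak formulation with the special test function related to $\bar u$, the $L^{2m}$ step upgrades this via interpolation and Young's inequality tolerating a controlled loss in $\e$, and finally the modified double Duhamel bootstrap promotes to $L^\infty$. After the known power loss, the residual $\tilde{R}$ ends up controlled by $\e^{3/2}$ in $L^\infty$, so that $\|\tilde{R}^\e\|_{L^\infty}\lesssim \e^{3/2}$.

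The main obstacle I expect is bookkeeping the residual cleanly, particularly for the boundary data: the first-order interior solution $\u_1$ does not satisfy the no-flux Neumann condition at its own order (see Step 5 of the construction), so one must verify that $\u_1+\ub^B_1$ together with the normalization $\pp[f_1]=0$ produces a boundary residual that is genuinely $O(\e^2)$ in the norm required by Theorem \ref{LN estimate}, rather than only in a pointwise sense. Once this is settled, the final bound follows by the triangle inequality: $\e\u_1+\e^2\u_2+\e\ub^I_1+\e\ub^B_1$ is $O(\e)$ in $\ue^{Kt}L^\infty$ by the construction, while $\|\ue^{Kt}R^{\e}\|_{L^\infty}\lesssim \e^{3/2}$, so the sum is $O(\e^{1/2})$, giving (\ref{main theorem 2}). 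Uniqueness follows by applying the same remainder estimate to the difference of two solutions with zero data.
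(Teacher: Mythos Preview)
Your overall strategy matches the paper's proof almost exactly: the same composite approximation $q^\e$, the same remainder $R=u^\e-q^\e$, the same classification of residual sources (interior, initial layer, boundary layer with cut-off and $\p_\tau$ terms), and the same closing appeal to the $L^{2m}$--$L^\infty$ remainder machinery. So the architecture is right.

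However, your quantitative bookkeeping is off in several places, and one of them matters. First, the boundary residual is not $O(\e^2)$ pointwise: the trace of the first-order initial layer on $\p\Omega$ contributes $-\e\big(\ub_1^I-\pp[\ub_1^I]\big)$, so $\|R-\pp[R]\|_{L^\infty([0,\infty)\times\Gamma^-)}\leq C\e$ only (it becomes $O(\e^2)$ in $L^2$ in time because of the $\ue^{-t/\e^2}$ decay, but not in $L^\infty$). There is also no ``$\ub_2^B$-type tail'' in the construction; the boundary-layer expansion stops at $k=1$.

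Second, and more importantly, the remainder does not come out $O(\e^{3/2})$ in $L^\infty$. You should be invoking Theorem~\ref{LI estimate} (the final $L^\infty$ estimate), not Lemma~\ref{LN estimate}; that theorem carries prefactors such as $\e^{-5/2+\frac{9m-4}{2m(2m-1)}}$ in front of $\|\ss\|_{L^{\frac{2m}{2m-1}}}$ and $\e^{-(m-1)/(2m-1)}$ in front of $\|\h\|_{L^{2m}}$. When you feed in the actual sizes of the residual source ($\|\mathcal{L}[R]\|_{L^{\frac{2m}{2m-1}}}\lesssim\e^{3-\frac{1}{2m}}|\ln\e|^8$), initial datum ($\|R_0\|_{L^{2m}}\lesssim\e^{1+\frac{1}{2m}}$), and boundary datum, the dominant terms after cancellation are of order $\e^{1/2+\delta(m)}$ with $\delta(m)\to 0$ as $m\to\infty$; the paper records this as $\|R\|_{L^\infty}\leq C\e^{1/2}$, not $\e^{3/2}$. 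Your final triangle-inequality sentence is then internally inconsistent anyway (if $\|R\|\lesssim\e^{3/2}$ and the higher-order corrections are $O(\e)$, the total would be $O(\e)$, not $O(\e^{1/2})$). The correct accounting is: $\|R\|_{L^\infty}\leq C\e^{1/2}$ directly, and since $\|\e\u_1+\e^2\u_2+\e\ub_1^I+\e\ub_1^B\|_{L^\infty}\leq C\e$, the claim \eqref{main theorem 2} follows.
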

\begin{proof}
We can divide the proof into several steps:\\
\ \\
Step 1: Well-posedness.\\
Based on Theorem \ref{LI estimate}, we directly obtain that there exists a unique solution $u^{\e}(t,\vx,\vw)\in L^{\infty}([0,\infty)\times\Omega\times\s^1)$ and satisfies
\begin{eqnarray}
&&\im{u^{\e}}{[0,\infty)\times\Omega\times\s^1}\\
&\leq& C \bigg(\frac{1}{\e^{\frac{m-1}{2m-1}}}\nm{h}_{L^{2m}(\Omega\times\s^1)}+\frac{1}{\e^{\frac{1}{2}-\frac{9m-4}{2m(2m-1)}}}\nm{\h}_{L^2(\Omega\times\s^1)}+\im{\h}{\Omega\times\s^1}\no\\
&&+\frac{1}{\e^{\frac{1}{m}}}\nm{g}_{L^m([0,\infty)\times\Gamma^+)}+\frac{1}{\e^{\frac{3}{2}-\frac{9m-4}{2m(2m-1)}}}\nm{\g}_{L^2([0,\infty)\times\Gamma^-)}
+\im{\g}{[0,\infty)\times\Gamma^-}\bigg).\nonumber
\end{eqnarray}
for any integer $m>2$. However, this estimate is not uniform in $\e$, so we resort to the expansions.\\
\ \\
Step 2: Remainder definitions.\\
We may rewrite the asymptotic expansion as follows:
\begin{eqnarray}
u^{\e}&\sim&\sum_{k=0}^{2}\e^k\u_k+\sum_{k=0}^{1}\e^k\ub^I_{k}+\sum_{k=0}^{1}\e^k\ub^B_{k}.
\end{eqnarray}
The remainder can be defined as
\begin{eqnarray}\label{pf 1}
R&=&u^{\e}-\sum_{k=0}^{2}\e^k\u_k-\sum_{k=0}^{1}\e^k\ub^I_{k}-\sum_{k=0}^{1}\e^k\ub^B_{k}=u^{\e}-\q-\qb^{I}-\qb^{B},
\end{eqnarray}
where
\begin{eqnarray}
\q&=&\sum_{k=0}^{2}\e^k\u_k,\\
\qb^{I}&=&\sum_{k=0}^{1}\e^k\ub^I_{k},\\
\qb^{B}&=&\sum_{k=0}^{1}\e^k\ub^B_{k}.
\end{eqnarray}
Noting the equation (\ref{transport}) is equivalent to the
equations (\ref{initial temp}) and (\ref{transport temp}), we write $\ll$ to denote the neutron
transport operator as follows:
\begin{eqnarray}
\ll u&=&\e^2\dt u+\e\vw\cdot\nx u+u-\bar u\\
&=&\p_{\sigma}u+\e\vw\cdot\nabla_xu+u-\bar u\no\\
&=&\e^2\frac{\p u}{\p t}+\sin\phi\frac{\p
u}{\p\eta}-\frac{\e}{\rk-\e\eta}\cos\phi\bigg(\frac{\p
u}{\p\phi}+\frac{\p u}{\p\tau}\bigg)+u-\bar u.\nonumber
\end{eqnarray}
\ \\
Step 3: Estimates of $\ll[\q]$.\\
The interior contribution can be estimated as
\begin{eqnarray}
\ll[\q]=\e^2\dt\q+\e\vw\cdot\nx \q+\q-\bar
\q&=&\e^3\dt\u_1+\e^4\dt\u_2+\e^{3}\vw\cdot\nx \u_2.
\end{eqnarray}
We have
\begin{eqnarray}
\abs{\e^3\dt\u_1}&\leq& C\e^{3},\\
\abs{\e^4\dt\u_2}&\leq& C\e^{4},\\
\abs{\e^{3}\vw\cdot\nx \u_2}&\leq& C\e^{3}\abs{\nx\u_2}\leq
C\e^{3}.
\end{eqnarray}
This implies
\begin{eqnarray}
\tm{\ll[\q]}{[0,\infty)\times\Omega\times\s^1}&\leq& C\e^{3},\label{pf 2}\\
\nm{\ll[\q]}_{L^{\frac{2m}{2m-1}}([0,\infty)\times\Omega\times\s^1)}&\leq& C\e^{3},\label{pf 2..}\\
\im{\ll[\q]}{[0,\infty)\times\Omega\times\s^1}&\leq& C\e^{3}.\label{pf 2.}
\end{eqnarray}
\ \\
Step 4: Estimates of $\ll \qb_{I}$.\\
The initial layer contribution can be estimated as
\begin{eqnarray}
\ll\qb_{I}&=&\p_{\tau}\qb_{I}+\e\vw\cdot\nabla_x\qb_{I}+\qb_{I}-\bar \qb_{I}=\e^{2}\nabla_x\ub_{1}^I.
\end{eqnarray}
It is easy to check that
\begin{eqnarray}
\im{\e^{2}\nabla_x\ub_{1}^I}{[0,\infty)\times\Omega\times\s^1}&\leq& C\e^{2}.
\end{eqnarray}
Note that $\ub_{1}^I$ decays exponentially in time and the scaling $\sigma=\dfrac{t}{\e^2}$, we have
\begin{eqnarray}
\tm{\e^{2}\nabla_x\ub_{1}^I}{[0,\infty)\times\Omega\times\s^1}&=&
\e^2\bigg(\int_0^{\infty}\int_{\Omega\times\s^1}(\nabla_x\ub_1)^2(\sigma,\vx,\vw)\ud{\vx}\ud{\vw}\ud{t}\bigg)^{\frac{1}{2}}\\
&\leq&C\e^2\bigg(\int_0^{\infty}\ue^{-\sigma}\ud{t}\bigg)^{\frac{1}{2}}\no\\
&=&C\e^3\bigg(\int_0^{\infty}\ue^{-\sigma}\ud{\sigma}\bigg)^{\frac{1}{2}}\leq C\e^3.\no
\end{eqnarray}
Similarly, we can show that
\begin{eqnarray}
\nm{\e^{2}\nabla_x\ub_{1}^I}_{L^{\frac{2m}{2m-1}}([0,\infty)\times\Omega\times\s^1)}&\leq& C\e^{4-\frac{1}{m}}.
\end{eqnarray}
In total, we know
\begin{eqnarray}
\tm{\ll\qb_{I}}{[0,\infty)\times\Omega\times\s^1}&\leq& C\e^{3},\label{pf 4}\\
\nm{\ll[\qb_I]}_{L^{\frac{2m}{2m-1}}([0,\infty)\times\Omega\times\s^1)}&\leq& C\e^{4-\frac{1}{m}},\label{pf 4..}\\
\im{\ll\qb_{I}}{[0,\infty)\times\Omega\times\s^1}&\leq& C\e^{2}.\label{pf 4.}
\end{eqnarray}
\ \\
Step 5: Estimates of $\ll \qb_{B}$.\\
Since $\ub_0^B=0$, we only need to estimate $\ub_1^B=(f_1-f_{1,L})\cdot\Upsilon=\v\Upsilon$ where
$f_1(\eta,\tau,\phi)$ solves the $\e$-Milne problem with geometric correction and $\v=f_1-f_{1,L}$. Hence, the boundary layer contribution can be
estimated as
\begin{eqnarray}\label{remainder temp 1}
&&\ll[\qb^{B}]\\
&=&\e^3\dt\ub^{B}_1+\sin\phi\frac{\p
(\e\ub^{B}_1)}{\p\eta}-\frac{\e}{\rk-\e\eta}\cos\phi\bigg(\frac{\p
(\e\ub^{B}_1)}{\p\phi}+\frac{\p(\e\ub^{B}_1)}{\p\tau}\bigg)+\e\ub^{B}_1-
\e\bub^{B}_1\no\\
&=&\e^3\dt\ub^B_{1}+\e\Bigg(\sin\phi\bigg(\Upsilon\frac{\p
\v}{\p\eta}+\v\frac{\p\Upsilon}{\p\eta}\bigg)-\frac{\Upsilon\e}{\rk-\e\eta}\cos\phi\bigg(\frac{\p
\v}{\p\phi}+\frac{\p \v}{\p\tau}\bigg)+\Upsilon\v-\Upsilon\bar\v\Bigg)\nonumber\\
&=&\e^3\dt\ub^B_{1}+\e\Upsilon\bigg(\sin\phi\frac{\p
\v}{\p\eta}-\frac{\e}{\rk-\e\eta}\cos\phi\frac{\p
\v}{\p\phi}+\v-\bar\v\bigg)+\e\Bigg(\sin\phi
\frac{\p\Upsilon}{\p\eta}\v-\frac{\Upsilon\e}{\rk-\e\eta}\cos\phi\frac{\p
\v}{\p\tau}\Bigg)\nonumber\\
&=&\e^3\dt\ub^B_{1}+\e\Bigg(\sin\phi
\frac{\p\Upsilon}{\p\eta}\v-\frac{\Upsilon\e}{\rk-\e\eta}\cos\phi\frac{\p
\v}{\p\tau}\Bigg)\nonumber.
\end{eqnarray}
We may directly bound the first term
\begin{eqnarray}
\im{\e^3\dt\ub^B_{1}}{[0,\infty)\times\Omega\times\s^1}&\leq& C\e^{3}.
\end{eqnarray}
Since $\Upsilon=1$ when $\eta\leq \dfrac{R_{\min}}{4\e^{\frac{1}{2}}}$, the effective region
of $\p_{\eta}\Upsilon$ is $\eta\geq \dfrac{R_{\min}}{4\e^{\frac{1}{2}}}$ which is further and further
from the origin as $\e\rt0$. By Theorem \ref{Milne theorem 3.}, the
second term in (\ref{remainder temp 1}) can be controlled as
\begin{eqnarray}
\im{\e\sin\phi\frac{\p\Upsilon}{\p\eta}\v}{[0,\infty)\times\Omega\times\s^1}&\leq&
C\ue^{-\frac{K_0}{\e^{\frac{1}{2}}}}\leq C\e^3.
\end{eqnarray}
For the third term in (\ref{remainder temp 1}), by Theorem \ref{Milne tangential.}, we have
\begin{eqnarray}
\im{-\frac{\Upsilon\e^2}{\rk-\e\eta}\cos\phi\frac{\p
\v}{\p\tau}}{[0,\infty)\times\Omega\times\s^1}&\leq&C\e^2\im{\frac{\p \v}{\p\tau}}{[0,\infty)\times\Omega\times\s^1}\leq C\e^2\abs{\ln{\e}}^8.
\end{eqnarray}
Also, the exponential decay of $\dfrac{\p\v}{\p\tau}$ by Theorem \ref{Milne tangential.} and the rescaling $\eta=\dfrac{\mu}{\e}$ implies
\begin{eqnarray}
\tm{-\frac{\Upsilon\e^2}{\rk-\e\eta}\cos\phi\frac{\p
\v}{\p\tau}}{[0,\infty)\times\Omega\times\s^1}&\leq& \e^2\tm{\frac{\p
\v}{\p\tau}}{[0,\infty)\times\Omega\times\s^1}\\
&\leq&\e^2\Bigg(\int_0^{\infty}\int_{-\pi}^{\pi}\int_0^{R_{\min}}(R_{\min}-\mu)\lnm{\frac{\p\v}{\p\tau}(t,\mu,\tau)}^2\ud{\mu}\ud{\tau}\ud{t}\Bigg)^{1/2}\no\\
&\leq&\e^{\frac{5}{2}}\Bigg(\int_0^{\infty}\int_{-\pi}^{\pi}\int_0^{R_{\min}\e^{-1}}(R_{\min}-\e\eta)\lnm{\frac{\p\v}{\p\tau}(\eta,\tau)}^2\ud{\eta}\ud{\tau}\ud{t}\Bigg)^{1/2}\no\\
&\leq&C\e^{\frac{5}{2}}\Bigg(\int_0^{\infty}\int_{-\pi}^{\pi}\int_0^{R_{\min}\e^{-1}}\ue^{-Kt}\ue^{-2K\eta}\abs{\ln(\e)}^{16}\ud{\eta}\ud{\tau}\ud{t}\Bigg)^{1/2}\no\\
&\leq& C\e^{\frac{5}{2}}\abs{\ln(\e)}^8.\no
\end{eqnarray}
Similarly, we have
\begin{eqnarray}
\nm{-\frac{\Upsilon\e^2}{\rk-\e\eta}\cos\phi\frac{\p
\v}{\p\tau}}_{L^{\frac{2m}{2m-1}}([0,\infty)\times\Omega\times\s^1)}&\leq&C\e^{3-\frac{1}{2m}}\abs{\ln(\e)}^8.
\end{eqnarray}
In total, we have
\begin{eqnarray}
\tm{\ll[\qb_{B}]}{[0,\infty)\times\Omega\times\s^1}&\leq& C\e^{\frac{5}{2}}\abs{\ln{\e}}^8,\label{pf 3}\\
\nm{\ll[\qb_B]}_{L^{\frac{2m}{2m-1}}([0,\infty)\times\Omega\times\s^1)}&\leq& C\e^{3-\frac{1}{2m}}\abs{\ln{\e}}^8,\label{pf 3..}\\
\im{\ll[\qb_{B}]}{[0,\infty)\times\Omega\times\s^1}&\leq& C\e^2\abs{\ln{\e}}^8.\label{pf 3..}
\end{eqnarray}
\ \\
Step 6: Diffusive Limit.\\
In summary, since $\ll[u^{\e}]=0$, collecting estimates in Step 2, Step 3 and Step 4, we can prove
\begin{eqnarray}
\tm{\ll[R]}{[0,\infty)\times\Omega\times\s^1}&\leq& C\e^{\frac{5}{2}}\abs{\ln{\e}}^8,\\
\nm{\ll[R]}_{L^{\frac{2m}{2m-1}}([0,\infty)\times\Omega\times\s^1)}&\leq& C\e^{3-\frac{1}{2m}}\abs{\ln{\e}}^8,\\
\im{\ll[R]}{[0,\infty)\times\Omega\times\s^1}&\leq& C\e^{2}\abs{\ln{\e}}^8.
\end{eqnarray}
Also, noting that $\ub^I_0(\sigma,\vx_0,\vw)=\pp[\ub_0^I](\sigma,\vx_0)$, based on our construction, at boundary $\p\Omega$, it is easy to see
\begin{eqnarray}
R-\pp[R]=-\e^2\Big(\vw\cdot\nx\u_1-\pp[\vw\cdot\nx\u_1]\Big)-\e\Big(\ub_1^I-\pp[\ub_1^I]\Big),
\end{eqnarray}
which, using the rescaling $\sigma=\dfrac{t}{\e^2}$, further implies
\begin{eqnarray}
\tm{R-\pp[R]}{[0,\infty)\times\Gamma^-}&\leq& C\e^2,\\
\nm{R-\pp[R]}_{L^m([0,\infty)\times\Gamma^-)}&\leq& C\e^{1+\frac{2}{m}},\\
\im{R-\pp[R]}{[0,\infty)\times\Gamma^-}&\leq& C\e.
\end{eqnarray}
On the other hand, at $t=0$, we have
\begin{eqnarray}
R_0(\vx,\vw)=R(0,\vx,\vw)=-\e\ub^B_1(0,\eta,\tau,\phi)-\e^2\u_2(0,\vx,\vw),
\end{eqnarray}
which, using the rescaling $\eta=\dfrac{\mu}{\e}$, further implies
\begin{eqnarray}
\tm{R_0}{\Omega\times\s^1}&\leq& C\e^{\frac{3}{2}},\\
\nm{R_0}_{L^{2m}(\Omega\times\s^1)}&\leq& C\e^{1+\frac{1}{2m}},\\
\im{R_0}{\Omega\times\s^1}&\leq& C\e.
\end{eqnarray}
Therefore, the remainder $R$ satisfies the equation
\begin{eqnarray}
\left\{
\begin{array}{rcl}
\e^2\dt R+\e \vw\cdot\nabla_x R+R-\bar
R&=&\ll[R]\ \ \ \text{for}\ \
(t,\vx,\vw)\in[0,\infty)\times\Omega\times\s^1,\\\rule{0ex}{1.0em}
R(0,\vx,\vw)&=&R_0(\vx,\vw)\ \ \text{for}\ \ (\vx,\vw)\in\Omega\times\s^1\\\rule{0ex}{1.0em}
(R-\pp[R])(t,\vx_0,\vw)&=&(R-\pp[R])(t,\vx_0,\vw)\ \ \text{for}\ \ t\in[0,\infty),\ \ \vx_0\in\p\Omega,\ \ \text{and}\ \ \vw\cdot\vn<0.
\end{array}
\right.
\end{eqnarray}
By Theorem \ref{LI estimate}, we have for integer $m\geq2$,
\begin{eqnarray}
&&\im{R}{[0,\infty)\times\Omega\times\s^1}\\
&\leq& C \bigg(\frac{1}{\e^{\frac{5}{2}-\frac{9m-4}{2m(2m-1)}}}\nm{\ll[R]}_{L^{\frac{2m}{2m-1}}([0,\infty)\times\Omega\times\s^1)}
+\frac{1}{\e^{\frac{3}{2}-\frac{9m-4}{2m(2m-1)}}}\tm{\ll[R]}{[0,\infty)\times\Omega\times\s^1}
+\im{\ll[R]}{[0,\infty)\times\Omega\times\s^1}\no\\
&&+\frac{1}{\e^{\frac{m-1}{2m-1}}}\nm{R_0}_{L^{2m}(\Omega\times\s^1)}+\frac{1}{\e^{\frac{1}{2}-\frac{9m-4}{2m(2m-1)}}}\nm{R_0}_{L^2(\Omega\times\s^1)}+\im{R_0}{\Omega\times\s^1}\no\\
&&+\frac{1}{\e^{\frac{1}{m}}}\nm{R-\pp[R]}_{L^m([0,\infty)\times\Gamma^+)}+\frac{1}{\e^{\frac{3}{2}-\frac{9m-4}{2m(2m-1)}}}\nm{R-\pp[R]}_{L^2([0,\infty)\times\Gamma^-)}
+\im{R-\pp[R]}{[0,\infty)\times\Gamma^-}\bigg)\no\\
&\leq& C \bigg(\frac{\e^{3-\frac{1}{2m}}\abs{\ln{\e}}^8}{\e^{\frac{5}{2}-\frac{9m-4}{2m(2m-1)}}}
+\frac{\e^{\frac{5}{2}}\abs{\ln{\e}}^8}{\e^{\frac{3}{2}-\frac{9m-4}{2m(2m-1)}}}
+\e^{2}\abs{\ln{\e}}^8+\frac{\e^{1+\frac{1}{2m}}}{\e^{\frac{m-1}{2m-1}}}+\frac{\e^{\frac{3}{2}}}{\e^{\frac{1}{2}-\frac{9m-4}{2m(2m-1)}}}+\e
+\frac{\e^{1+\frac{2}{m}}}{\e^{\frac{1}{m}}}+\frac{\e^2}{\e^{\frac{3}{2}-\frac{9m-4}{2m(2m-1)}}}
+\e\bigg)\no\\
&\leq&C\max\left\{\e^{\frac{1}{2}+\frac{7m-3}{2m(2m-1)}}\abs{\ln{\e}}^8,\e^{\frac{1}{2}+\frac{3m-1}{2m(2m-1)}}\right\}\no\\
&\leq&C\e^{\frac{1}{2}}.\no
\end{eqnarray}
Since it is obvious that
\begin{eqnarray}
\im{\e\u_1+\e^2\u_2+\e\ub_{1}^I+\e\ub^B_{1}}{[0,\infty)\times\Omega\times\s^1}\leq C\e,
\end{eqnarray}
our result naturally follows. The exponential decay can be easily derived following a similar argument using Theorem \ref{LI estimate..}, since their estimates are almost the same. This completes the proof of our main theorem.
\end{proof}

\section{Regularity of $\e$-Milne Problem with Geometric Correction}

We consider the $\e$-Milne problem with geometric correction for $f^{\e}(\eta,\tau,\phi)$ in
the domain $(\eta,\tau,\phi)\in[0,L]\times[-\pi,\pi)\times[-\pi,\pi)$ where $L=R_{\min}\e^{-\frac{1}{2}}$ as
\begin{eqnarray}\label{Milne problem.}
\left\{ \begin{array}{l}\displaystyle \sin\phi\frac{\p
f^{\e}}{\p\eta}+F(\eta,\tau)\cos\phi\frac{\p
f^{\e}}{\p\phi}+f^{\e}-\bar f^{\e}=S^{\e}(\eta,\tau,\phi),\\\rule{0ex}{1.0em}
f^{\e}(0,\tau,\phi)= h^{\e}(\tau,\phi)+\pp[f^{\e}](0,\tau)\ \ \text{for}\
\ \sin\phi>0,\\\rule{0ex}{1.0em}
f^{\e}(L,\tau,\phi)=f^{\e}(L,\tau,\rr[\phi]),
\end{array}
\right.
\end{eqnarray}
where $\rr[\phi]=-\phi$,
\begin{eqnarray}
\pp
[f^{\e}](0,\tau)=-\half\int_{\sin\phi<0}f^{\e}(0,\tau,\phi)\sin\phi\ud{\phi},
\end{eqnarray}
and
\begin{eqnarray}
F(\eta,\tau)=-\frac{\e}{\rk (\tau)-\e\eta},
\end{eqnarray}
Define a potential function $V(\eta,\tau)$ satisfying that $\p_{\eta}V=-F$ and $V(0,\tau)=0$. In this section, for convenience, we temporarily ignore the superscript on $\e$. We define the norms in the
space $(\eta,\phi)\in[0,L]\times[-\pi,\pi)$ as follows:
\begin{eqnarray}
\tnnm{f(\tau)}&=&\bigg(\int_0^{L}\int_{-\pi}^{\pi}\abs{f(\eta,\tau,\phi)}^2\ud{\phi}\ud{\eta}\bigg)^{1/2},\\
\lnnm{f(\tau)}&=&\sup_{(\eta,\phi)\in[0,L]\times[-\pi,\pi)}\abs{f(\eta,\tau,\phi)},
\end{eqnarray}
Similarly, we can define the norm at in-flow boundary as
\begin{eqnarray}
\tnm{f(0,\tau)}&=&\bigg(\int_{\sin\phi>0}\abs{f(0,\tau,\phi)}^2\ud{\phi}\bigg)^{1/2},\\
\lnm{f(0,\tau)}&=&\sup_{\sin\phi>0}\abs{f(0,\tau,\phi)},
\end{eqnarray}
We further assume
\begin{eqnarray}\label{Milne bounded}
\lnm{h(\tau)}+\lnm{\frac{\p h}{\p\phi}(\tau)}+\lnm{\frac{\p h}{\p\tau}(\tau)}\leq C,
\end{eqnarray}
and
\begin{eqnarray}\label{Milne decay}
\lnnm{\ue^{K_0\eta}S(\tau)}+\lnnm{\ue^{K_0\eta}\frac{\p S}{\p\eta}(\tau)}+\lnnm{\ue^{K_0\eta}\frac{\p S}{\p\phi}(\tau)}+\lnnm{\ue^{K_0\eta}\frac{\p S}{\p\tau}(\tau)}\leq C,
\end{eqnarray}
for $C>0$ and $K_0>0$ uniform in $\e$ and $\tau$. In \cite{AA007} and \cite[Section 6]{AA003}, it has been proved that
\begin{lemma}\label{Milne lemma 1.}
In order for the equation (\ref{Milne problem.}) to have a solution
$f(\eta,\tau,\phi)\in L^{\infty}([0,L]\times[-\pi,\pi)\times[-\pi,\pi))$, the boundary data $h$
and the source term $S$ must satisfy the compatibility condition
\begin{eqnarray}\label{Milne compatibility condition}
\int_{\sin\phi>0}h(\tau,\phi)\sin\phi\ud{\phi}
+\int_0^{L}\int_{-\pi}^{\pi}\ue^{-V(s)}S(s,\tau,\phi)\ud{\phi}\ud{s}=0.
\end{eqnarray}
In particular, if $S=0$, then the compatibility condition reduces to
\begin{eqnarray}\label{Milne reduced compatibility condition}
\int_{\sin\phi>0}h(\tau,\phi)\sin\phi\ud{\phi}=0.
\end{eqnarray}
\end{lemma}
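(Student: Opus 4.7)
The plan is to derive the compatibility condition as a conservation law obtained by multiplying (\ref{Milne problem.}) by the integrating factor $\ue^{-V(\eta)}$ (where $\p_\eta V=-F$ and $V(0)=0$) and then integrating in $\phi\in[-\pi,\pi)$. Testing against a function depending only on $\eta$ annihilates the gain term $\bar f$ since $\int_{-\pi}^{\pi}(f-\bar f)\ud{\phi}=0$, so the only information that survives is a first-order ODE in $\eta$ for the flux $G(\eta):=\int_{-\pi}^{\pi}\sin\phi\, f(\eta,\tau,\phi)\ud{\phi}$.

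More precisely, I would integrate the $\sin\phi\,\p_\eta f$ term directly and handle the $F\cos\phi\,\p_\phi f$ term by integration by parts in $\phi$: using $2\pi$-periodicity of $f$ in $\phi$, the boundary contributions cancel and $\int\cos\phi\,\p_\phi f\ud{\phi}=\int\sin\phi\, f\ud{\phi}=G(\eta)$. The resulting identity
\begin{equation*}
\p_\eta G(\eta)+F(\eta,\tau)\,G(\eta)=\int_{-\pi}^{\pi}S(\eta,\tau,\phi)\ud{\phi}
\end{equation*}
becomes, after multiplication by $\ue^{-V(\eta)}$, the exact derivative $\p_\eta\bigl(\ue^{-V(\eta)}G(\eta)\bigr)=\ue^{-V(\eta)}\int_{-\pi}^{\pi}S\ud{\phi}$. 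Integrating from $0$ to $L$ gives
\begin{equation*}
\ue^{-V(L)}G(L)-G(0)=\int_0^{L}\int_{-\pi}^{\pi}\ue^{-V(s)}S(s,\tau,\phi)\ud{\phi}\ud{s}.
\end{equation*}

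The remaining task is to evaluate $G(0)$ and $G(L)$ using the boundary conditions. At $\eta=L$, the specular condition $f(L,\tau,\phi)=f(L,\tau,-\phi)$ together with the oddness of $\sin\phi$ under $\phi\mapsto-\phi$ force $G(L)=0$. At $\eta=0$, I would split $G(0)$ into the incoming part $\sin\phi>0$ and the outgoing part $\sin\phi<0$; plugging the boundary relation $f(0,\tau,\phi)=h(\tau,\phi)+\pp[f](0,\tau)$ into the incoming integral and using that $\pp[f](0,\tau)$ is constant in $\phi$ with $\int_{\sin\phi>0}\sin\phi\ud{\phi}=2$, the contribution of $\pp[f](0,\tau)$ is exactly $2\pp[f](0,\tau)$. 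By the definition $\pp[f](0,\tau)=-\tfrac12\int_{\sin\phi<0}f(0,\tau,\phi)\sin\phi\ud{\phi}$, the outgoing contribution is $-2\pp[f](0,\tau)$, so the two reflection terms cancel and $G(0)=\int_{\sin\phi>0}\sin\phi\, h(\tau,\phi)\ud{\phi}$.

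Substituting $G(L)=0$ and this expression for $G(0)$ into the integrated conservation law yields the claimed compatibility condition (\ref{Milne compatibility condition}), and setting $S\equiv0$ recovers (\ref{Milne reduced compatibility condition}). The only nontrivial point is the cancellation between the two halves of the diffusive boundary contribution to $G(0)$; everything else is periodicity plus an integrating-factor computation. I do not anticipate any hidden obstacle since the regularity of $f$ assumed in the lemma makes all integrations by parts rigorous.
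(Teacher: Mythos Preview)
Your argument is correct and is precisely the standard derivation of the compatibility condition: integrate the equation in $\phi$ to obtain the flux ODE $\p_\eta G+F\,G=\int S\,\ud{\phi}$, solve it with the integrating factor $\ue^{-V}$, and evaluate the boundary contributions using specular reflection at $\eta=L$ and the diffusive boundary identity at $\eta=0$. The cancellation you flag at $\eta=0$ is indeed the only subtle point, and you handled it correctly.

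Note that the paper does not actually supply its own proof of this lemma; it cites \cite{AA007} and \cite[Section~6]{AA003} for the result. Your proposal is exactly the argument one finds in those references, so there is nothing to contrast.
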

It is easy to see if $f$ is a solution to (\ref{Milne problem.}),
then $f+C$ is also a solution for any constant $C$. Hence, in order
to obtain a unique solution, we need a normalization condition
\begin{eqnarray}\label{Milne normalization}
\pp[f](0,\tau)=0.
\end{eqnarray}
Hence, based on \cite{AA007}, we have the well-posedness and regularity results.
\begin{theorem}\label{Milne theorem 1.}
There exists a unique solution $f(\eta,\tau,\phi)$ to the $\e$-Milne problem
(\ref{Milne problem.}) with the normalization condition (\ref{Milne
normalization}) satisfying
\begin{eqnarray}
\tnnm{f(\eta,\tau,\phi)-f_L(\tau)}\leq C,
\end{eqnarray}
for some $f_L\in\r$.
\end{theorem}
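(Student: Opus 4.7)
The plan is to establish well-posedness by a penalization-compactness argument, adapted from the approach in \cite{AA003} and \cite{AA007}. First I would introduce a damping parameter $\lambda > 0$ and study the auxiliary problem
\begin{eqnarray*}
\lambda f^{\lambda} + \sin\phi\,\p_{\eta} f^{\lambda} + F(\eta,\tau)\cos\phi\,\p_{\phi} f^{\lambda} + f^{\lambda} - \bar f^{\lambda} = S,
\end{eqnarray*}
with the same boundary conditions (diffuse-plus-inflow at $\eta=0$ and specular reflection at $\eta=L$). For fixed $\lambda>0$ the zeroth-order coefficient $(1+\lambda)$ strictly dominates the gain operator $\bar{(\cdot)}$, so a standard contraction mapping along characteristics produces a unique $f^{\lambda}\in L^{\infty}$. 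The task then is to obtain $\lambda$-independent bounds and pass to the limit.

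Second, I would derive the basic energy estimate by multiplying by $f^{\lambda}\ue^{-V(\eta,\tau)}$, where $V$ is the potential $\p_{\eta}V=-F$, $V(0,\tau)=0$, and integrating over $(\eta,\phi)\in[0,L]\times[-\pi,\pi)$. The choice of weight is designed so that the $F\cos\phi\,\p_{\phi}$ transport integrates to zero after an integration by parts in $\phi$ and $\eta$; the specular condition at $\eta=L$ kills the outer boundary term; the diffuse condition at $\eta=0$ together with the identity $\int_{\sin\phi>0}(\mathcal{P}[f])^2\sin\phi\,d\phi \le \int_{\sin\phi>0} f^2 \sin\phi\,d\phi$ (Cauchy--Schwarz) produces a nonnegative bulk contribution from the outgoing trace, leaving only a source term involving $h$. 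This yields
\begin{eqnarray*}
\lambda\,\tnnm{f^{\lambda}(\tau)}^2 + \tnnm{f^{\lambda}-\bar f^{\lambda}(\tau)}^2 + \tnm{(1-\mathcal{P})[f^{\lambda}](0,\tau)}^2 \le C\bigl(\tnm{h(\tau)}^2 + \tnnm{\ue^{K_0\eta/2}S(\tau)}^2\bigr),
\end{eqnarray*}
uniformly in $\lambda$ and $\e$. This controls the non-kernel part of $f^{\lambda}$.

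Third, I would identify the correct additive constant. Let $f^{\lambda}_L := \frac{1}{\pi}\int_{-\pi}^{\pi} f^{\lambda}(L,\tau,\phi)\sin^2\phi\,\ud\phi$, and consider $r^{\lambda}:=f^{\lambda}-f^{\lambda}_L$, which solves the same transport equation (the constant is in the kernel of the collision operator and compatible with specular reflection at $\eta=L$) with modified inflow data. Integrating the equation in $\phi$ and using the compatibility condition (\ref{Milne compatibility condition}) shows that $\bar r^{\lambda}$ has a derivative in $\eta$ that is controlled by the already-estimated non-kernel part, giving a Poincar\'e-type bound $\tnnm{\bar r^{\lambda}(\tau)}\le C$. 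Combined with Step 2, this produces $\tnnm{r^{\lambda}(\tau)}\le C$ independently of $\lambda$. Weak $L^2$ compactness then yields a limit $f-f_L$ solving (\ref{Milne problem.}), and uniqueness under the normalization (\ref{Milne normalization}) follows by applying the same energy identity to the difference of two solutions (with $h=0$, $S=0$), which forces the difference to be a constant killed by $\mathcal{P}[\cdot](0,\tau)=0$.

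The main obstacle is precisely this last step of pinning down the constant $f_L$: without the artificial $\lambda$-damping there is an undetermined additive constant in the direction of the collision kernel, and the diffuse boundary condition does not by itself prevent its growth in $\eta$. Here the geometric correction $F(\eta,\tau)$ complicates the usual Milne reduction, since the characteristics are no longer straight lines and the $\phi$-integral of the equation does not reduce to a simple conservation law for $\bar f$. The compatibility condition (\ref{Milne compatibility condition}) plays the role of a Fredholm solvability criterion, and its correct interplay with the normalization (\ref{Milne normalization}) is what makes the uniform $L^2$ bound possible; the details follow the template of \cite{AA007} with only cosmetic modifications to accommodate the diffuse (rather than in-flow) boundary data at $\eta=0$.
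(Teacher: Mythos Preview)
The paper does not supply its own proof of this theorem; immediately before stating it the paper says ``Hence, based on \cite{AA007}, we have the well-posedness and regularity results,'' and Theorems \ref{Milne theorem 1.}--\ref{Milne tangential.} are all quoted from \cite{AA007} and \cite[Section 6]{AA003}. Your penalization-plus-energy sketch is precisely the strategy carried out in those references, so there is no meaningful divergence to report: the damping parameter $\lambda$, the weight $\ue^{-V}$ to make the transport part integrate cleanly, the use of the compatibility condition (\ref{Milne compatibility condition}) as a Fredholm-type solvability criterion, and the normalization (\ref{Milne normalization}) to kill the free constant are exactly the ingredients used there.

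One small caution on your Step 3: integrating the equation in $\phi$ does not directly give you an ODE for $\bar r^{\lambda}$; what you get is a conservation law for the weighted flux $\ue^{-V}\int\sin\phi\,f\,\ud\phi$, which the specular condition at $\eta=L$ forces to vanish when $\int S\,\ud\phi$ is compatible. Control of $\bar f-f_L$ then comes from taking the first moment (multiplying by $\sin\phi$ and integrating) to relate $\p_{\eta}\bar f$ to second moments of $f-\bar f$, which are already bounded by the energy estimate. This is what the references actually do; your ``Poincar\'e-type'' phrasing is morally right but mechanically slightly off.
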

\begin{theorem}\label{Milne theorem 2.}
The unique solution $f(\eta,\tau,\phi)$ to the $\e$-Milne problem
(\ref{Milne problem.}) with the normalization condition (\ref{Milne
normalization}) satisfies
\begin{eqnarray}
\lnnm{f(\eta,\tau,\phi)-f_L(\tau)}\leq C.
\end{eqnarray}
\end{theorem}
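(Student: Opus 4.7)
The plan is to upgrade the $L^2$ estimate of Theorem~\ref{Milne theorem 1.} to the desired $L^\infty$ bound by a mild formulation along characteristics combined with a double Duhamel argument, following the scheme developed for the steady problem in \cite{AA007}. Set $g(\eta,\tau,\phi)=f(\eta,\tau,\phi)-f_L(\tau)$, which satisfies the same transport equation as $f$ (since $f_L$ is a constant in $(\eta,\phi)$) and, by Theorem~\ref{Milne theorem 1.}, already enjoys $\tnnm{g(\tau)}\le C$ uniformly in $\e$. The characteristics of $\sin\phi\,\p_\eta+F(\eta,\tau)\cos\phi\,\p_\phi$ admit the conserved energy $E=\cos\phi\cdot\ue^{-V(\eta,\tau)}$ (using $\p_\eta V=-F$), and each characteristic either reaches the in-flow boundary $\{\eta=0,\sin\phi>0\}$ or hits $\eta=L$ and is specularly reflected back. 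Along such characteristics, Duhamel's formula represents $g(\eta,\tau,\phi)$ as a boundary contribution (controlled by $\lnm{h(\tau)}$ via (\ref{Milne bounded}) together with the normalization $\pp[f](0,\tau)=0$) plus $\int \ue^{-s}\big(\bar g+S\big)\,\ud s$, where the exponential factor comes from the damping term $f$ on the left-hand side of (\ref{Milne problem.}). The hypothesis (\ref{Milne decay}) absorbs the $S$ contribution directly in $L^\infty$, so the only nontrivial piece is the one containing $\bar g$.

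To estimate the $\bar g$ term I substitute the Duhamel representation into itself: writing $\bar g(\eta',\tau)=\frac{1}{2\pi}\int_{-\pi}^{\pi} g(\eta',\tau,\phi')\,\ud\phi'$ and then expanding each $g(\eta',\tau,\phi')$ via its own mild form produces a double integral over pairs of characteristics intersecting at intermediate points. Away from the grazing region, the map from the velocity variable $\phi'$ to the spatial variable along the secondary characteristic has Jacobian comparable to $|\sin\phi'|$, so a change of variables converts $\ud\phi'$ into $\ud\eta''$ with a controllable weight. Cauchy--Schwarz against the $\ud\eta''$ measure bounds the resulting double integral by $\tnnm{g(\tau)}$, which is finite uniformly in $\e$ by Theorem~\ref{Milne theorem 1.}. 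The residual $\lnnm{g(\tau)}$ contributions picked up along the way carry a coefficient that is either the measure of the grazing strip or the exponential damping factor $\ue^{-s_0}$ for some $s_0$ bounded below, and hence can be absorbed into the left-hand side to close the bootstrap.

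The principal obstacle is the grazing set $\{\sin\phi\approx 0\}$, where the change of variables degenerates and a direct $L^\infty$ integration along characteristics is not summable. The geometric correction $F$ is what rescues the argument: the conserved energy $E=\cos\phi\cdot \ue^{-V}$ forces a characteristic with small $|\sin\phi|$ at one value of $\eta$ to swing back into a non-grazing regime at nearby $\eta$, so the set of $\phi'$ producing long near-grazing arcs has angular measure controlled by $\e^{1/2}$ (the length scale $L=R_{\min}\e^{-1/2}$ entering through the potential $V$). This small-measure control, together with the exponential decay in $\eta$ of $S$ built into (\ref{Milne decay}) and the specular reflection condition at $\eta=L$, allows one to dominate the grazing contribution by a fixed fraction of $\lnnm{g(\tau)}$, closing the estimate and yielding $\lnnm{g(\tau)}\le C$ uniformly in $\e$, which is equivalent to the claim.
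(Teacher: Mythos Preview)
The paper does not give its own proof of this theorem; it simply cites \cite{AA007} and \cite[Section~6]{AA003}. Your outline---mild formulation along the characteristics of $\sin\phi\,\p_\eta+F\cos\phi\,\p_\phi$, double Duhamel to convert the $\bar g$ term into an integral bounded by $\tnnm{g(\tau)}$, and absorption of residual $\lnnm{g(\tau)}$ contributions---is precisely the scheme carried out there, so your approach matches.

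One point to correct, however, is your handling of the grazing set. You attribute the smallness of the grazing contribution to an $\e^{1/2}$ angular measure arising from the length $L=R_{\min}\e^{-1/2}$ through $V$. That is not how the argument in \cite{AA003,AA007} closes, and in fact the force $F=-\e/(\rk-\e\eta)$ is $O(\e)$ uniformly on $[0,L]$, so $V$ varies only by $O(\e^{1/2})$ over the whole domain and the characteristics are perturbations of straight lines; the geometric correction does not shrink the grazing strip in any $\e$-dependent way. The actual mechanism is to introduce a \emph{fixed} small cutoff $\delta$ (independent of $\e$): for $\abs{\sin\phi'}<\delta$ the $\phi'$-integral has measure $O(\delta)$, contributing $O(\delta)\lnnm{g(\tau)}$, while for $\abs{\sin\phi'}\ge\delta$ the change of variables $\phi'\mapsto\eta''$ has Jacobian bounded below by $C\delta$, contributing $C(\delta)\tnnm{g(\tau)}$. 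Choosing $\delta$ small but fixed absorbs the first term and leaves a uniform bound. With this correction your sketch is sound.
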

\begin{theorem}\label{Milne theorem 3.}
There exists $K>0$ such that the solution $f(\eta,\tau,\phi)$ to the
$\e$-Milne problem (\ref{Milne problem.}) with the normalization
condition (\ref{Milne normalization}) satisfies
\begin{eqnarray}
\lnnm{\ue^{K\eta}\bigg(f(\eta,\tau,\phi)-f_L(\tau)\bigg)}\leq C.
\end{eqnarray}
\end{theorem}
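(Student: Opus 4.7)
The plan is to bootstrap from the uniform $L^\infty$ control of Theorem \ref{Milne theorem 2.} to exponential decay via an exponential-weight transformation and a smallness-absorption argument.

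Write $g:=f-f_L$, which lies in $L^\infty$ by Theorem \ref{Milne theorem 2.} and solves the same $\e$-Milne equation with source $S$ (since $f_L$ is a constant in $(\eta,\phi)$ and hence annihilated by the collision operator). For a small parameter $K\in(0,K_0)$ to be chosen, define the weighted unknown $G(\eta,\tau,\phi):=\ue^{K\eta}g(\eta,\tau,\phi)$. Using $\p_\eta G=\ue^{K\eta}\p_\eta g+KG$ and the $\phi$-independence of the weight, a direct computation shows that $G$ satisfies
\begin{equation*}
\sin\phi\,\p_\eta G+F(\eta,\tau)\cos\phi\,\p_\phi G+G-\overline G\;=\;\ue^{K\eta}S+K\sin\phi\,G,
\end{equation*}
together with the specular reflection $G(L,\tau,\phi)=G(L,\tau,\rr[\phi])$ inherited from $f$, and incoming data $G(0,\tau,\phi)=g(0,\tau,\phi)$ for $\sin\phi>0$. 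The assumption $\lnnm{\ue^{K_0\eta}S(\tau)}\leq C$ in (\ref{Milne decay}) guarantees $\lnnm{\ue^{K\eta}S(\tau)}\leq C$ whenever $K\leq K_0$, and the incoming data is uniformly bounded thanks to Theorem \ref{Milne theorem 2.}.

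The second step is to treat the displayed equation as an $\e$-Milne problem for $G$ with modified source $\tilde S:=\ue^{K\eta}S+K\sin\phi\,G$ and to invoke the quantitative $L^\infty$ estimate from the proof of Theorem \ref{Milne theorem 2.} in \cite{AA007}. Linearity in the source then produces an inequality of the form
\begin{equation*}
\lnnm{G(\tau)-G_L(\tau)}\;\leq\;C_\star\bigl(\lnm{g(0,\tau,\cdot)}+\lnnm{\ue^{K\eta}S(\tau)}\bigr)+C_\star K\,\lnnm{G(\tau)},
\end{equation*}
where $C_\star$ is independent of $K$ and $G_L(\tau)$ is the tail constant for $G$. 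Boundedness of $G_L$ follows from a truncation argument in $L$, using that $g\to 0$ at the tail (which is already implicit in the construction of $f_L$ in Theorem \ref{Milne theorem 1.}). Choosing $K$ small enough that $C_\star K<\tfrac{1}{2}$ lets us absorb the last term and conclude $\lnnm{G(\tau)}\leq C$, which is exactly the claim $\lnnm{\ue^{K\eta}(f-f_L)}\leq C$.

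The main obstacle is verifying that the constant $C_\star$ in the $L^\infty$ estimate from \cite{AA007} is genuinely independent of $K$ and of the boundary-layer length $L=R_{\min}\e^{-1/2}$. Since the underlying proof proceeds through a weighted $L^2$ energy estimate and then iterated Duhamel representations along the characteristics of $\sin\phi\,\p_\eta+F\cos\phi\,\p_\phi$, both of which are linear in the source and insensitive to the exponential weight itself, the extra term $K\sin\phi\,G$ enters the final bound with exactly a factor of $K$, as required for absorption. A secondary consistency check is that $G$ automatically inherits the appropriate normalization and compatibility conditions from $g$ (since it is the weighted version of a bona fide solution), so no separate solvability argument is needed for the weighted problem, and the construction of $G_L$ parallels that of $f_L$.
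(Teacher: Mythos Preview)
The paper does not prove this theorem in the text: Theorems~\ref{Milne theorem 1.}--\ref{Milne tangential.} are stated under the preamble ``Hence, based on \cite{AA007}, we have the well-posedness and regularity results,'' with no argument given. So there is no in-paper proof to compare against; your exponential-weighting-plus-absorption scheme is the standard route to such decay statements and is essentially correct.

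One point deserves sharpening. Your treatment of the tail constant $G_L$ is circular as written: ``$g\to 0$ at the tail'' is precisely the conclusion you are trying to establish, not an available hypothesis. In fact $G_L=0$ identically. Using the formula $f_L=\tfrac{1}{\pi}\int_{-\pi}^{\pi}f(L,\tau,\phi)\sin^2\phi\,\ud\phi$ that the paper records in Section~2.6, one computes $\tfrac{1}{\pi}\int_{-\pi}^{\pi} g(L,\tau,\phi)\sin^2\phi\,\ud\phi=f_L-f_L=0$, whence $G_L=\ue^{KL}\cdot 0=0$, and the absorption inequality $\lnnm{G}\leq C_\star\bigl(\lnm{h}+\lnnm{\ue^{K\eta}S}\bigr)+C_\star K\,\lnnm{G}$ closes directly once $K<1/(2C_\star)$. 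You should also make explicit that you are invoking a quantitative form of the $L^\infty$ estimate from \cite{AA007} in which the constant depends on the source only through $\lnnm{\tilde S}$ (not through $\lnnm{\ue^{K_0\eta}\tilde S}$) and is uniform in the length $L$; both properties hold for the underlying characteristic-and-energy argument there, but neither is visible in the bare statement of Theorem~\ref{Milne theorem 2.} as recorded in this paper.
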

\begin{theorem}\label{Milne tangential.}
There exists $K>0$ such that the solution $f(\eta,\tau,\phi)$ to the
$\e$-Milne problem (\ref{Milne problem.}) with the normalization
condition (\ref{Milne normalization}) satisfies
\begin{eqnarray}
\lnnm{\ue^{K\eta}\frac{\p(f-f_L)}{\p\tau}(\eta,\tau,\phi)}\leq C\abs{\ln(\e)}^8.
\end{eqnarray}
\end{theorem}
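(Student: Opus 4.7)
The plan is to differentiate the $\e$-Milne problem (\ref{Milne problem.}) in the tangential variable $\tau$ and recast the equation for $g := \p_\tau f$ as another $\e$-Milne problem, so that Theorem \ref{Milne theorem 3.} can be applied. Since $\sin\phi$, $\cos\phi$ and the averaging operator do not involve $\tau$, a direct differentiation yields
\begin{equation*}
\sin\phi\,\p_\eta g+F(\eta,\tau)\cos\phi\,\p_\phi g+g-\bar g=\p_\tau S-(\p_\tau F)\cos\phi\,\p_\phi f,
\end{equation*}
with boundary data $g(0,\tau,\phi)=\p_\tau h(\tau,\phi)+\pp[g](0,\tau)$ on $\sin\phi>0$ and specular reflection $g(L,\tau,\phi)=g(L,\tau,\rr[\phi])$. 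Differentiating the compatibility condition (\ref{Milne compatibility condition}) in $\tau$, using that $V$ depends on $\tau$ only through $\rk(\tau)$, produces the compatibility condition for this new problem, and a normalization $\pp[g](0,\tau)=0$, modulo an additive constant absorbed into the far-field value $g_L:=\p_\tau f_L$, puts the system into the framework of Theorems \ref{Milne theorem 1.}--\ref{Milne theorem 3.}.

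Applying Theorem \ref{Milne theorem 3.} to $g-g_L$ gives an exponentially weighted $L^\infty$ bound in terms of the $L^\infty$ norm of $\p_\tau h$ and the weighted $L^\infty$ norm of the right-hand side. The boundary term is uniformly bounded by (\ref{Milne bounded}) and $\p_\tau S$ decays exponentially by (\ref{Milne decay}). The decisive contribution is therefore $(\p_\tau F)\cos\phi\,\p_\phi f$, for which a direct computation gives $\p_\tau F=\e\rk'(\tau)(\rk(\tau)-\e\eta)^{-2}$, uniformly $O(\e)$ on $[0,L]=[0,R_{\min}\e^{-1/2}]$ since $\e\eta\leq R_{\min}\e^{1/2}\ll\rk(\tau)$ and $\rk'$ is bounded on a smooth convex domain. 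Thus this term is controlled by $C\e\,\lnnm{\ue^{K\eta}\p_\phi f}$.

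The proof thus reduces to a preliminary a priori bound on $\lnnm{\ue^{K\eta}\p_\phi f}$, coupled to the analogous bound for $\p_\eta f$ (the two get entangled whenever the equation is differentiated in either variable). These are to be obtained in the spirit of \cite{AA007}: differentiate (\ref{Milne problem.}) in $\phi$ and in $\eta$ and analyze the resulting transport equations by Duhamel along the characteristics of the $\e$-Milne flow in $(\eta,\phi)$-space, absorbing the logarithmic divergence produced by trajectories passing through the grazing set $\{\sin\phi=0\}$. This yields bounds of the form $C\abs{\ln\e}^{k}$; once both $\p_\eta f$ and $\p_\phi f$ are controlled with their respective logarithmic losses, plugging into the Milne estimate for $g-g_L$ gives the target $C\abs{\ln\e}^{8}$, the $O(\e)$ prefactor from $\p_\tau F$ making the contribution of the singular source harmless.

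The main obstacle is this logarithmic-loss analysis at the grazing set, not the tangential step itself. The characteristics of the $\e$-Milne equation turn precisely where $\sin\phi=0$, and each $\phi$- or $\eta$-differentiation produces a source concentrated near this turning locus where the transport degenerates. Neither a direct energy estimate nor a naive Duhamel iteration suffices: the bound must be closed through a coupled bootstrap on $(\p_\eta f,\p_\phi f)$, and it is the stacking of these logarithmic losses through the coupled system that is ultimately responsible for the power $\abs{\ln\e}^8$ in the statement. Once the coupled $W^{1,\infty}$ estimate is in hand, the final application of Theorem \ref{Milne theorem 3.} to $g-g_L$ is a routine closed-form computation.
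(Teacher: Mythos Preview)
The paper does not supply its own proof of this theorem: Section~3 merely states Theorems~\ref{Milne theorem 1.}--\ref{Milne tangential.} and attributes them to \cite{AA007} (and \cite{AA003}). Your outline---differentiate in $\tau$, treat $g=\p_\tau f$ as the solution of a new $\e$-Milne problem with source $\p_\tau S-(\p_\tau F)\cos\phi\,\p_\phi f$, observe $\p_\tau F=O(\e)$ on $[0,L]$, and feed in a weighted $W^{1,\infty}$ bound on $\p_\phi f$ (coupled with $\p_\eta f$) obtained by a characteristics/Duhamel analysis with logarithmic loss at the grazing set---is exactly the strategy of \cite{AA007}, so your approach coincides with the reference the paper invokes.

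One point to tighten if you write this out: invoking Theorem~\ref{Milne theorem 3.} for the $g$-equation presupposes that its source satisfies the exponential-decay hypothesis (\ref{Milne decay}) with a constant that now carries the $\abs{\ln\e}$ factors from $\p_\phi f$. This is harmless by linearity, but you should state explicitly that the constants in Theorems~\ref{Milne theorem 1.}--\ref{Milne theorem 3.} depend linearly on the size of the data in (\ref{Milne bounded})--(\ref{Milne decay}), so that the logarithmic loss propagates cleanly to the conclusion. The genuinely nontrivial step, as you note, is the coupled bound on $(\p_\eta f,\p_\phi f)$ near $\sin\phi=0$; the specific exponent $8$ on $\abs{\ln\e}$ records how many times the bootstrap in \cite{AA007} iterates through that coupling, and cannot be read off from the reduction you describe alone.
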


\section{Remainder Estimate}

In this section, we consider the remainder equation for $u(t,\vx,\vw)$ as
\begin{eqnarray}\label{neutron}
\left\{
\begin{array}{l}
\e^2\dt u+\e \vw\cdot\nabla_x u+u-\bar
u=\ss\ \ \ \text{for}\ \
(t,\vx,\vw)\in[0,\infty)\times\Omega\times\s^1,\\\rule{0ex}{1.0em}
u(0,\vx,\vw)=\h(\vx,\vw)\ \ \text{for}\ \ (\vx,\vw)\in\Omega\times\s^1\\\rule{0ex}{1.0em}
u(t,\vx_0,\vw)-\pp[u](t,\vx_0)=\g(t,\vx_0,\vw)\ \ \text{for}\ \ t\in[0,\infty),\ \ \vx_0\in\p\Omega\ \ \text{and}\ \ \vw\cdot\vn<0,
\end{array}
\right.
\end{eqnarray}
where
\begin{eqnarray}
\bar u(t,\vx)=\frac{1}{2\pi}\int_{\s^1}u(t,\vx,\vw)\ud{\vw},
\end{eqnarray}
\begin{eqnarray}
\pp[u](t,\vx_0)=\frac{1}{2}\int_{\vw\cdot\vn>0}u(t,\vx_0,\vw)(\vw\cdot\vn)\ud{\vw},
\end{eqnarray}
$\vn$ is the outward unit normal vector, with the Knudsen number $0<\e<<1$. The initial and boundary data satisfy the
compatibility condition
\begin{eqnarray}
\h(\vx_0,\vw)-\pp[\h](\vx_0)=\g(0,\vx_0,\vw)\ \ \text{for}\ \ \vx_0\in\p\Omega\ \ \text{and}\ \ \vw\cdot\vn<0.
\end{eqnarray}
We define the $L^p$ norm with $1\leq p<\infty$ and $L^{\infty}$ norm in $\Omega\times\s^1$ as
usual:
\begin{eqnarray}
\nm{f}_{L^p(\Omega\times\s^1)}&=&\bigg(\int_{\Omega}\int_{\s^1}\abs{f(\vx,\vw)}^p\ud{\vw}\ud{\vx}\bigg)^{1/p},\\
\nm{f}_{L^{\infty}(\Omega\times\s^1)}&=&\sup_{(\vx,\vw)\in\Omega\times\s^1}\abs{f(\vx,\vw)}.
\end{eqnarray}
Define the $L^p$ norm with $1\leq p<\infty$ and $L^{\infty}$ norm on the boundary as follows:
\begin{eqnarray}
\nm{f}_{L^p(\Gamma)}&=&\bigg(\iint_{\Gamma}\abs{f(\vx,\vw)}^p\abs{\vw\cdot\vn}\ud{\vw}\ud{\vx}\bigg)^{1/p},\\
\nm{f}_{L^p(\Gamma^{\pm})}&=&\bigg(\iint_{\Gamma^{\pm}}\abs{f(\vx,\vw)}^p\abs{\vw\cdot\vn}\ud{\vw}\ud{\vx}\bigg)^{1/p},\\
\nm{f}_{L^{\infty}(\Gamma)}&=&\sup_{(\vx,\vw)\in\Gamma}\abs{f(\vx,\vw)},\\
\nm{f}_{L^{\infty}(\Gamma^{\pm})}&=&\sup_{(\vx,\vw)\in\Gamma^{\pm}}\abs{f(\vx,\vw)}.
\end{eqnarray}
Similar notation also applies to the space
$[0,\infty)\times\Omega\times\s^1$, $[0,\infty)\times\Gamma$, and
$[0,\infty)\times\Gamma^{\pm}$.

\subsection{Preliminaries}

In order to show the $L^{\infty}$ estimates of the
equation (\ref{neutron}), we start with some preparations with the
transport equation.
\begin{lemma}\label{well-posedness lemma 1}
Assume $\ss(t,\vx,\vw)\in
L^{\infty}([0,\infty)\times\Omega\times\s^1)$, $\h(\vx,\vw)\in
L^{\infty}(\Omega\times\s^1)$ and $\g(t,x_0,\vw)\in
L^{\infty}([0,\infty)\times\Gamma^-)$. Then the
transport equation
\begin{eqnarray}\label{penalty equation}
\left\{
\begin{array}{l}
\e^2\dt u+\e \vw\cdot\nabla_x u+u=\ss\ \ \ \text{for}\ \
(t,\vx,\vw)\in[0,\infty)\times\Omega\times\s^1,\\\rule{0ex}{1.0em}
u(0,\vx,\vw)=\h(\vx,\vw)\ \ \text{for}\ \ (\vx,\vw)\in\Omega\times\s^1\\\rule{0ex}{1.0em}
u(t,\vx_0,\vw)=\g(t,\vx_0,\vw)\ \ \text{for}\ \ t\in[0,\infty),\ \ \vx_0\in\p\Omega\ \ \text{and}\ \ \vw\cdot\vn<0,
\end{array}
\right.
\end{eqnarray}
has a solution
$u(t,\vx,\vw)\in L^{\infty}([0,\infty)\times\Omega\times\s^1)$
satisfying
\begin{eqnarray}
\im{u}{[0,\infty)\times\Omega\times\s^1}\leq
\im{\g}{[0,\infty)\times\Gamma^-}+\im{\h}{\Omega\times\s^1}+\im{\ss}{[0,\infty)\times\Omega\times\s^1}.
\end{eqnarray}
\end{lemma}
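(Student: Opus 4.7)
The plan is to solve (\ref{penalty equation}) explicitly along characteristics. Since the scattering coupling is absent, at each fixed $\vw\in\s^1$ the PDE decouples into a scalar ODE along the straight line $\vx(s)=\vx-(t-s)\vw/\e$, $s\leq t$: setting $U(s)=u(s,\vx(s),\vw)$ yields
\begin{eqnarray*}
\e^2 U'(s)+U(s)=\ss(s,\vx(s),\vw),
\end{eqnarray*}
which is explicitly integrable via the factor $\ue^{s/\e^2}$.

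For fixed $(t,\vx,\vw)$ I would define the backward exit time $t_b=t_b(t,\vx,\vw)\in[0,t]$ as the largest $s$ with $\vx(s')\in\bar\Omega$ for every $s'\in[s,t]$. Convexity of $\Omega$ guarantees exactly one of two alternatives: either (a) $t_b=0$ with $\vx(0)\in\bar\Omega$, in which case $U(t_b)=\h(\vx(0),\vw)$ is furnished by the initial data; or (b) $t_b>0$ with $\vx(t_b)\in\p\Omega$ and $\vw\cdot\vn(\vx(t_b))\leq 0$, in which case $U(t_b)=\g(t_b,\vx(t_b),\vw)$ is furnished by the inflow boundary data. Integrating the ODE from $t_b$ to $t$ then produces the Duhamel representation
\begin{eqnarray*}
u(t,\vx,\vw)=\ue^{-(t-t_b)/\e^2}U(t_b)+\frac{1}{\e^2}\int_{t_b}^{t}\ue^{-(t-s)/\e^2}\ss(s,\vx(s),\vw)\,\ud{s},
\end{eqnarray*}
which both defines a candidate $u$ and, by direct differentiation in $t$ and $\vx$, verifies (\ref{penalty equation}) in the mild sense.

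For the $L^\infty$ estimate I would take absolute values in the Duhamel formula, bound $|U(t_b)|\leq\im{\h}{\Omega\times\s^1}+\im{\g}{[0,\infty)\times\Gamma^-}$ (only one term is ever needed depending on which alternative occurs) together with $\ue^{-(t-t_b)/\e^2}\leq 1$, and estimate the source contribution by pulling out $\im{\ss}{[0,\infty)\times\Omega\times\s^1}$ and using the explicit identity
\begin{eqnarray*}
\frac{1}{\e^2}\int_{t_b}^{t}\ue^{-(t-s)/\e^2}\,\ud{s}=1-\ue^{-(t-t_b)/\e^2}\leq 1.
\end{eqnarray*}
Summing the three contributions gives precisely the inequality in the statement.

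No step here is genuinely delicate: the only subtlety is the measurability of $t_b$, and hence of $u$, as a function of $(t,\vx,\vw)$. This is standard for $C^3$ convex domains, since the grazing set $\Gamma^0$ has measure zero and backward trajectories intersect $\p\Omega$ transversally off $\Gamma^0$; convexity additionally prevents any characteristic from re-entering $\Omega$, so there is no ambiguity in choosing $t_b$. Existence of the solution and its $L^\infty$ bound are thus obtained simultaneously from the characteristic formula, and uniqueness (though not claimed) would follow by applying the same representation to the difference of two solutions with vanishing data.
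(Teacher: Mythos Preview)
Your proposal is correct and follows essentially the same approach as the paper: both solve the equation by the method of characteristics, write down the Duhamel formula with the backward exit time, and estimate term by term using $\int \ue^{-(t-s)/\e^2}\,\ud{s}/\e^2\leq 1$. The only cosmetic difference is that you parameterize characteristics by physical time $s\in[0,t]$, whereas the paper uses a rescaled arclength parameter and splits according to $\{t\gtrless \e^2 t_b\}$; the resulting formulas and estimates are identical.
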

\begin{proof}
The characteristics $\Big(T(s),X(s),W(s)\Big)$ of the equation (\ref{penalty
equation}) which goes through $(t,\vx,\vw)$ is defined by
\begin{eqnarray}\label{character}
\left\{
\begin{array}{rcl}
\Big(T(0),X(0),W(0)\Big)&=&(t,\vx,\vw)\\\rule{0ex}{2.0em}
\dfrac{\ud{T(s)}}{\ud{s}}&=&\e^2,\\\rule{0ex}{2.0em}
\dfrac{\ud{X(s)}}{\ud{s}}&=&\e W(s),\\\rule{0ex}{2.0em}
\dfrac{\ud{W(s)}}{\ud{s}}&=&0.
\end{array}
\right.
\end{eqnarray}
which implies
\begin{eqnarray}
\left\{
\begin{array}{rcl}
T(s)&=&t+\e^2s,\\
X(s)&=&\vx+(\e\vw)s,\\
W(s)&=&\vw,
\end{array}
\right.
\end{eqnarray}
Hence, we can rewrite the equation (\ref{penalty equation}) along
the characteristics as
\begin{eqnarray}\label{well-posedness temp 31}
&&u(t,\vx,\vw)\\
&=&{\bf 1}_{\{t\geq \e^2t_b\}}\bigg(\g\Big(t-\e^2t_b,\vx-(\e
\vw)t_b,\vw\Big)\ue^{-t_b}+\int_{0}^{t_b}\ss\Big(t-\e^2(t_b-s),\vx-\e(t_b-s)\vw,\vw\Big)\ue^{-(t_b-s)}\ud{s}\bigg)\no\\
&&+{\bf 1}_{\{t\leq \e^2t_b\}}\bigg(\h\left(\vx-(\e\vw)\frac{
t}{\e^2},\vw\right)\ue^{-\frac{t}{\e^2}}+\int_{0}^{\frac{t}{\e^2}}\ss\left(\e^2s,\vx-\e\left(\frac{t}{\e^2}-s\right)\vw,\vw\right)\ue^{-(\frac{t}{\e^2}-s)}\ud{s}\bigg)\no,
\end{eqnarray}
where the backward exit time $t_b$ is defined as
\begin{equation}\label{exit time}
t_b(\vx,\vw)=\inf\{s\geq0: (\vx-\e s\vw,\vw)\in\Gamma^-\}.
\end{equation}
Then we can naturally estimate
\begin{eqnarray}
\im{u}{[0,\infty)\times\Omega\times\s^1}
&\leq&{\bf 1}_{\{t\geq \e^2t_b\}}\bigg(\ue^{-t_b}\im{\g}{[0,\infty)\times\Gamma^-}+(1-\ue^{t_b})\im{\ss}{[0,\infty)\times\Omega\times\s^1}\bigg)\\
&&+{\bf 1}_{\{t\leq \e^2t_b\}}\bigg(\ue^{-\frac{t}{\e^2}}\im{\h}{\Omega\times\s^1}+(1-\ue^{\frac{t}{\e^2}})\im{\ss}{[0,\infty)\times\Omega\times\s^1}\bigg)\no\\
&\leq&\im{\g}{[0,\infty)\times\Gamma^-}+\im{\h}{\Omega\times\s^1}+\im{\ss}{[0,\infty)\times\Omega\times\s^1}\nonumber.
\end{eqnarray}
Since $u$ can be explicitly tracked back to the initial or
boundary data, the existence naturally follows from above estimate.
\end{proof}

\subsection{$L^2$ Estimate}

In this section, we start from the preliminary equation (\ref{penalty equation}) and take $\bar u$ and $\pp[u]$ into consideration.
\begin{lemma}\label{well-posedness lemma 2}
Define the near-grazing set of $\Gamma^+$ or $\Gamma^-$ as
\begin{eqnarray}
\Gamma_{\pm}^{\delta}=\left\{(\vx,\vw)\in\Gamma^{\pm}:
\abs{\vn(\vx)\cdot\vw}\leq\delta\right\}.
\end{eqnarray}
Then
\begin{eqnarray}
&&\int_s^t\nm{f(r){\bf{1}}_{\{\Gamma^{\pm}\backslash\Gamma_{\pm}^{\delta}\}}}_{L^1(\Gamma^{\pm})}\ud{r}\\
&\leq&
C(\delta)\bigg(\e\nm{f(s)}_{L^1(\Omega\times\s^1)}+\int_s^t\left(\nm{f(r)}_{L^1(\Omega\times\s^1)}
+\nm{(\e\dt+\vw\cdot\nx)f(r)}_{L^1(\Omega\times\s^1)}\right)\ud{r}\bigg).\no
\end{eqnarray}
\end{lemma}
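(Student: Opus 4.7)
The plan is to exploit a single geometric fact combined with the fundamental theorem of calculus along characteristics of $\e\dt+\vw\cdot\nx$. Fix $\vw\in\s^1$ and a point $(\vx_0,\vw)\in\Gamma^+\setminus\Gamma_+^{\delta}$. Since $\Omega$ is $C^3$ convex and $|\vw\cdot\vn(\vx_0)|\geq\delta$, there is a constant $\kappa_0=\kappa_0(\delta,\Omega)>0$, uniform over all such pairs, for which the backward segment $\{\vx_0-\alpha\vw:\alpha\in[0,\kappa_0]\}$ is contained in $\bar\Omega$. For $r\in[s,t]$ put $\bar\tau(r)=\min\{\kappa_0,(r-s)/\e\}$, so that the backward characteristic $\alpha\mapsto(r-\e\alpha,\vx_0-\alpha\vw,\vw)$ stays inside $[s,t]\times\bar\Omega\times\s^1$ for $\alpha\in[0,\bar\tau(r)]$. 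Along this characteristic the fundamental theorem of calculus yields, for every such $\alpha$,
\begin{equation*}
f(r,\vx_0,\vw)=f(r-\e\alpha,\vx_0-\alpha\vw,\vw)+\int_0^{\alpha}(\e\dt+\vw\cdot\nx)f(r-\e\rho,\vx_0-\rho\vw,\vw)\,\ud{\rho}.
\end{equation*}

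The next step is to split the $r$-integral into the ``bulk'' region $r\in[s+\e\kappa_0,t]$, where $\bar\tau(r)=\kappa_0$, and the ``initial sliver'' $r\in[s,s+\e\kappa_0]$, where $\bar\tau(r)=(r-s)/\e$. In the bulk region I would average the identity in $\alpha$ over $[0,\kappa_0]$ and pass to absolute values:
\begin{equation*}
\kappa_0|f(r,\vx_0,\vw)|\leq\int_0^{\kappa_0}|f(r-\e\alpha,\vx_0-\alpha\vw,\vw)|\,\ud{\alpha}+\kappa_0\int_0^{\kappa_0}|(\e\dt+\vw\cdot\nx)f|(r-\e\rho,\vx_0-\rho\vw,\vw)\,\ud{\rho}.
\end{equation*}
Then multiply by $|\vw\cdot\vn(\vx_0)|$ and integrate $(\vx_0,\vw,r)$ over $(\Gamma^+\setminus\Gamma_+^{\delta})\times[s+\e\kappa_0,t]$. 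The key calculation is the Jacobian identity: for fixed $\vw$, convexity of $\Omega$ ensures that $(\vx_0,\alpha)\mapsto\vx=\vx_0-\alpha\vw$ is injective from $\{\vx_0\in\p\Omega:|\vw\cdot\vn|\geq\delta\}\times[0,\kappa_0]$ into $\bar\Omega$ with $\ud{\vx}=|\vw\cdot\vn(\vx_0)|\,\ud{\vx_0}\,\ud{\alpha}$. Substituting $\vx$ for $(\vx_0,\alpha)$ and then $r'=r-\e\alpha$ for $r$ (with $\vx,\vw$ frozen), both $r$-integrals map into $[s,t]\times\Omega\times\s^1$, producing the desired $C(\delta)\int_s^t(\|f(r')\|_{L^1(\Omega\times\s^1)}+\|(\e\dt+\vw\cdot\nx)f(r')\|_{L^1(\Omega\times\s^1)})\,\ud{r'}$ bound.

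For the sliver $r\in[s,s+\e\kappa_0]$, averaging collapses since $\bar\tau(r)$ may be arbitrarily small, so I would instead use the identity at the single value $\alpha=\bar\tau(r)=(r-s)/\e$, which traces exactly back to time $s$:
\begin{equation*}
|f(r,\vx_0,\vw)|\leq|f(s,\vx_0-\bar\tau(r)\vw,\vw)|+\int_0^{\bar\tau(r)}|(\e\dt+\vw\cdot\nx)f|(r-\e\rho,\vx_0-\rho\vw,\vw)\,\ud{\rho}.
\end{equation*}
Integrate this against $|\vw\cdot\vn|\,\ud{\vx_0}\,\ud{\vw}\,\ud{r}$ and change variables $\sigma=(r-s)/\e\in[0,\kappa_0]$, $\ud{r}=\e\,\ud{\sigma}$; the same Jacobian identity converts the first piece into $\e\int_{\text{tube}}|f(s,\vx,\vw)|\,\ud{\vx}\,\ud{\vw}\leq\e\|f(s)\|_{L^1(\Omega\times\s^1)}$, which is precisely the $\e\|f(s)\|_{L^1}$ term in the statement. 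The transport-derivative contribution is absorbed into $\int_s^{s+\e\kappa_0}\|(\e\dt+\vw\cdot\nx)f\|_{L^1}\,\ud{r'}$ by an analogous substitution. The estimate on $\Gamma^-\setminus\Gamma_-^{\delta}$ is identical after reversing the characteristic direction.

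The main obstacle is the mismatch of scales: the spatial averaging distance $\kappa_0$ depends only on $(\delta,\Omega)$ and is therefore $O(1)$, whereas traversing that distance along the characteristic costs a time $\e\kappa_0$ that vanishes with $\e$. This is why one cannot simply average in $\alpha$ uniformly over $r$, and why the $\e\|f(s)\|_{L^1}$ initial remainder is unavoidable in this functional form. Carrying out the two substitutions (first $(\vx_0,\alpha)\to\vx$ via the Jacobian, then $r\to r'=r-\e\alpha$) in the correct order and keeping the non-grazing cutoff compatible with the convex-geometry lower bound on $\kappa_0$ is the technical heart of the argument.
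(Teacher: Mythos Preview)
Your proposal is correct and is precisely the argument the paper has in mind: the paper does not give its own proof but cites \cite[Lemma 2.1]{Esposito.Guo.Kim.Marra2013} together with ``a standard scaling argument,'' and what you have written is exactly that---the Esposito--Guo--Kim--Marra trace estimate (average along the backward characteristic, convert the boundary-tube integral to a volume integral via the Jacobian $\ud{\vx}=|\vw\cdot\vn|\,\ud{\vx_0}\,\ud{\alpha}$) with the $\e$-rescaled transport operator $\e\dt+\vw\cdot\nx$ accounting for the initial sliver $[s,s+\e\kappa_0]$ and hence the $\e\nm{f(s)}_{L^1}$ term. Your identification of the two substitutions and the order in which to perform them is the standard route; there is nothing to add.
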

\begin{proof}
See the proof of \cite[Lemma 2.1]{Esposito.Guo.Kim.Marra2013} with a standard scaling argument.
\end{proof}
\begin{lemma}(Green's Identity)\label{well-posedness lemma 3}
Assume $f(t,\vx,\vw),\ g(t,\vx,\vw)\in
L^{\infty}([0,\infty)\times\Omega\times\s^1)$ and $\dt f+\vw\cdot\nx
f,\ \dt g+\vw\cdot\nx g\in L^2([0,\infty)\times\Omega\times\s^1)$
with $f,\ g\in L^2([0,\infty)\times\Gamma)$. Then for almost all
$s,t\in[0,\infty)$,
\begin{eqnarray}
&&\int_s^t\iint_{\Omega\times\s^1}\bigg((\dt f+\vw\cdot\nx f)g+(\dt
g+\vw\cdot\nx
g)f\bigg)\ud{\vx}\ud{\vw}\ud{r}\\
&=&\int_s^t\int_{\Gamma}fg\ud{\gamma}\ud{r}+\iint_{\Omega\times\s^1}f(t)g(t)\ud{\vx}\ud{\vw}-\iint_{\Omega\times\s^1}f(s)g(s)\ud{\vx}\ud{\vw},\no
\end{eqnarray}
where $\ud{\gamma}=(\vw\cdot\vn)\ud{s}$ on the boundary.
\end{lemma}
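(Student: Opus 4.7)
The plan is to reduce the identity to the smooth case by a density argument, since the claim is essentially a spacetime integration by parts combined with the fundamental theorem of calculus.

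\textbf{Step 1 (smooth case).} I would first prove the identity under the additional assumption that $f, g \in C^1([0,\infty) \times \bar\Omega \times \s^1)$ with adequate decay. The Leibniz rule gives
\[ (\dt f + \vw \cdot \nx f) g + (\dt g + \vw \cdot \nx g) f = \dt(fg) + \vw \cdot \nx(fg). \]
Integrating over $[s,t] \times \Omega \times \s^1$, the $\dt$ term produces $\iint_{\Omega \times \s^1}(fg)(t) \ud{\vx}\ud{\vw} - \iint_{\Omega \times \s^1}(fg)(s) \ud{\vx}\ud{\vw}$ by the fundamental theorem of calculus. For the spatial term, since $\vw$ is independent of $\vx$ one has $\vw \cdot \nx(fg) = \nx \cdot (\vw fg)$, so the classical divergence theorem in $\Omega$ yields $\int_{\p\Omega}(fg)(\vw \cdot \vn)\ud{s_\vx}$. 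Integrating in $\vw$ and in $r \in [s,t]$ produces the desired boundary term $\int_s^t \int_\Gamma fg \ud{\gamma}\ud{r}$.

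\textbf{Step 2 (approximation).} For general $f, g$ satisfying the stated hypotheses, I would approximate by spacetime mollification: set $f_\delta = \rho_\delta \ast f$ using a suitable cutoff/extension outside $[0,\infty) \times \Omega$, and similarly for $g$. Because the transport field $(1, \vw)$ in the variables $(t, \vx)$ has constant coefficients, a DiPerna--Lions-style commutator bound trivially yields $(\dt + \vw \cdot \nx) f_\delta \to (\dt + \vw \cdot \nx) f$ in $L^2_{loc}$, and similarly for $g$. Applying the smooth identity of Step 1 to $(f_\delta, g_\delta)$ and passing to the limit, the volume integrals converge by dominated convergence (using the uniform $L^\infty$ bound together with the $L^2$ convergence of the transport derivatives), and the time-slice integrals $\iint (fg)(t)\ud{\vx}\ud{\vw}$ converge for almost every $s, t$.

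\textbf{The main obstacle} is convergence of the boundary integral $\int_s^t \int_\Gamma f_\delta g_\delta \ud{\gamma} \ud{r}$, since interior mollification is not automatically compatible with the boundary trace and a priori could alter grazing contributions. Here the hypothesis $f, g \in L^2([0,\infty) \times \Gamma)$ is exactly the assumption needed to repair this: following the trace theory for transport equations (Ukai, Cessenat, Mischler, and as used in the proof of \cite[Lemma 2.1]{Esposito.Guo.Kim.Marra2013}), the space of functions with finite graph norm $\nm{\cdot}_{L^\infty} + \nm{(\dt + \vw \cdot \nx)\cdot}_{L^2([0,\infty)\times\Omega\times\s^1)} + \nm{\cdot}_{L^2([0,\infty)\times\Gamma)}$ coincides with the closure of smooth functions in this norm. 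This density, combined with Lemma \ref{well-posedness lemma 2} to handle the near-grazing piece $\Gamma^\delta_\pm$ uniformly, reduces the full identity to Step 1 and completes the argument.
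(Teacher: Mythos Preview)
The paper does not actually prove this lemma: its entire proof is a citation to \cite[Chapter 9]{Cercignani.Illner.Pulvirenti1994} and \cite{Esposito.Guo.Kim.Marra2013}. Your sketch is precisely the standard argument that those references carry out --- integration by parts for smooth $f,g$, followed by a density/mollification argument using the trace theory for the free transport operator --- so in substance you are reconstructing what the cited sources do. One minor remark: your appeal to Lemma \ref{well-posedness lemma 2} at the end is not really needed here. That lemma bounds the non-grazing part of the boundary integral by interior quantities, which is useful for a priori estimates, but the convergence of $\int_\Gamma f_\delta g_\delta\,\ud{\gamma}$ follows directly from the density statement you already invoked (smooth functions are dense in the graph norm including the $L^2(\Gamma)$ trace), together with the assumed $L^2(\Gamma)$ bound on $f,g$. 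Otherwise the outline is sound.
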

\begin{proof}
See \cite[Chapter 9]{Cercignani.Illner.Pulvirenti1994} and
\cite{Esposito.Guo.Kim.Marra2013}.
\end{proof}
\begin{theorem}\label{LT estimate}
Assume $\ss(t,\vx,\vw)\in
L^{\infty}([0,\infty)\times\Omega\times\s^1)$, $\h(\vx,\vw)\in
L^{\infty}(\Omega\times\s^1)$ and $\g(t,x_0,\vw)\in
L^{\infty}([0,\infty)\times\Gamma^-)$. Then the neutron
transport equation (\ref{neutron}) has a unique solution
$u(t,\vx,\vw)\in L^2([0,\infty)\times\Omega\times\s^1)$ satisfying
\begin{eqnarray}
&&\nm{u(t)}_{L^2(\Omega\times\s^1)}+\frac{1}{\e^{\frac{1}{2}}}\nm{(1-\pp)[u]}_{L^2([0,\infty)\times\Gamma^-)}+\nm{u}_{L^2([0,\infty)\times\Omega\times\s^1)}\\
&\leq& C\bigg( \frac{1}{\e^2}\nm{\ss}_{L^2([0,\infty)\times\Omega\times\s^1)}
+\nm{\h}_{L^2(\Omega\times\s^1)}+\frac{1}{\e}\nm{\g}_{L^2([0,\infty)\times\Gamma^-)}\bigg).\no
\end{eqnarray}
\end{theorem}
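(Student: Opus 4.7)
My plan is to follow the weak-formulation strategy with specially designed test functions described in the introduction. After settling existence by penalization, the argument unfolds as a base energy identity, a grazing correction for boundary traces, a kernel test function recovering $\nm{\bar u}_{L^2L^2}$, and a temporal difference quotient closure.

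For well-posedness I rewrite (\ref{neutron}) as $\e^2\dt u + \e\vw\cdot\nx u + u = \bar u + \ss$ with $u|_{\Gamma^-} = \pp[u] + \g$, add a penalization $\lambda u$ to the left, and iterate treating $\bar u^{(n)}$ and $\pp[u^{(n)}]$ as known sources. Each step is solved by Lemma \ref{well-posedness lemma 1}; the iteration contracts in a suitably weighted $L^2$ norm and $\lambda\downarrow 0$ recovers an $L^2$ solution, whose uniqueness will follow from the energy identity derived below.

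The base energy identity comes from multiplying (\ref{neutron}) by $u$ and invoking Green's identity (Lemma \ref{well-posedness lemma 3}). The kernel orthogonality $\int_{\s^1}(u-\bar u)\ud{\vw}=0$ converts $\int u(u-\bar u)$ into $\int(u-\bar u)^2$. For the boundary contribution $\tfrac{\e}{2}\int_\Gamma u^2(\vw\cdot\vn)$, I substitute $u = \pp[u]+\g$ on $\Gamma^-$ and decompose $u = \pp[u] + (1-\pp)[u]$ on $\Gamma^+$; the diffuse-reflection identity $\int_{\vw\cdot\vn>0}(\vw\cdot\vn)\ud{\vw}=2$ together with the definition of $\pp$ force the $(\pp[u])^2$ pieces from $\Gamma^\pm$ to cancel, producing (\ref{intemp 2}) with control of $\e^2\nm{u(t)}^2 + \e\nm{(1-\pp)[u]}^2_{\Gamma^+} + \nm{u-\bar u}^2$ in terms of $\nm{\h}^2$, $\nm{\g}^2_{\Gamma^-}$, and the pairing $(\ss,u)$. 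Only $\nm{\bar u}_{L^2L^2}$ remains unaccounted for.

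To recover $\nm{\bar u}_{L^2L^2}$ I test (\ref{neutron}) against $\psi = \vw\cdot\nx\Phi$, where $\Phi(t,\vx)$ solves the Poisson problem $-\Delta_x\Phi = \bar u$ in $\Omega$ with a Neumann condition compatible with the diffuse reflection. Angular integration in the pairing of $\psi$ with $\e\vw\cdot\nx u$ generates $\pi\e\nm{\bar u}_{L^2(\Omega)}^2$, while the elliptic regularity estimate $\nm{\Phi}_{H^2}\lesssim\nm{\bar u}_{L^2}$ bounds every other term using the base step. The associated boundary integrals expose $\pp[u]$ on $\Gamma^+$, which the weak formulation cannot capture on its own; I invoke Lemma \ref{well-posedness lemma 2} with $f=u$ to trade the near-grazing boundary mass of $u$ for volume norms of $u$ and $(\e\dt+\vw\cdot\nx)u = \e^{-1}(\bar u - u + \ss)$, both already under control. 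The remaining contribution $\e^2\int u\,\dt\psi$ carries $\dt\bar u$ through the elliptic problem; following the author's hint I discretize $\dt$ by temporal difference quotients $D_\tau^h$ and work with test functions constant on each time slice, so the time derivative vanishes in a distributional sense on each piece and a bound for $\dt\bar u$ emerges as $h\downarrow 0$. Young's inequality finally absorbs the $\nm{u}^2$ and $\nm{\bar u}^2$ tails and produces the stated scalings $\e^{-2}\nm{\ss}$, $\nm{\h}$, $\e^{-1}\nm{\g}$. The main obstacle will be precisely this last step: the $\dt\bar u$ term sits on the wrong side of the inequality, and only a very careful choice of the discretization and of how $\psi$ is frozen on each sub-interval will keep the loss to exactly the $\e^{-2}$ advertised in the theorem.
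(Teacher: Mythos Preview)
Your strategy matches the paper's: penalization for existence, energy identity, a Poisson-type test function to recover $\nm{\bar u}_{L^2L^2}$, the grazing estimate for $\pp[u]$, and a difference-quotient argument for the $\dt\bar u$ contribution. Two points, however, need correction.

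First, the auxiliary Poisson problem in the paper carries the \emph{Dirichlet} condition $\zeta=0$ on $\partial\Omega$, not a Neumann condition. A Neumann problem for $-\Delta\Phi=\bar u$ is solvable only if $\int_\Omega\bar u\,\ud{\vx}=0$, which does not hold here in general; if you insist on Neumann you must subtract the spatial mean and track that constant separately, which complicates the closure. With Dirichlet the estimate $\nm{\zeta}_{H^2}\le C\nm{\bar u}_{L^2}$ holds unconditionally and feeds directly into the bounds on $\phi=-\vw\cdot\nx\zeta$.

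Second, you have mislocated the role of Lemma~\ref{well-posedness lemma 2}. The term $\e^2\nm{\pp[u]}^2_{L^2(\Gamma^+)}$ already sits on the right-hand side of the energy inequality (\ref{intemp 2}) you cite---it comes from the cross term $\e\int_{\Gamma^-}\g\,\pp[u]\,\abs{\vw\cdot\vn}$ after Cauchy---so you cannot declare that ``only $\nm{\bar u}_{L^2L^2}$ remains unaccounted for'' immediately after the energy step. The paper handles $\nm{\pp[u]}^2_{L^2(\Gamma^+)}$ \emph{before} the kernel estimate, by multiplying the equation by $u$ to produce $(\e\dt+\vw\cdot\nx)(u^2)$ and applying Lemma~\ref{well-posedness lemma 2} with $f=u^2$ (not $f=u$, which would only yield an $L^1$ trace). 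This converts the $L^2$ boundary norm into volume $L^1$ norms of $u^2$ and $(\e\dt+\vw\cdot\nx)(u^2)$, both of which are controlled by the energy identity itself. Only after this absorption does one introduce the kernel test function; in that step the $\pp[u]$ boundary contribution is dealt with by the structure of $\phi$ and the Dirichlet condition on $\zeta$, not by a second appeal to the grazing lemma.

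A minor point on existence: the paper uses a \emph{double} penalization---a $(1-\tfrac{1}{j})$ relaxation of the reflection operator $\pp$ together with the interior damping $\lambda$---so that the fixed-point iteration becomes a strict $L^\infty$ contraction with ratio $1-\tfrac{1}{j}$. Your single-$\lambda$ sketch is plausible, but you should verify that the full reflection $\pp$ does not spoil contractivity before passing to the limit.
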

\begin{proof}
We divide the proof into several steps:\\
\ \\
Step 1: Penalized equation.\\
We first consider the penalized equation for $u_{j,\l}$,
\begin{eqnarray}\label{penalty neutron}
\left\{
\begin{array}{l}\displaystyle
\e^2\dt u_{j,\l}+\e\vw\cdot\nx u_{j,\l}+(1+\l)u_{j,\l}-\bar
u_{j,\l}=\ss(t,\vx,\vw)\ \ \text{for}\ \ (t,\vx,\vw)\in[0,\infty)\times\Omega\times\s^1,\\\rule{0ex}{1.0em}
u_{j,\l}(0,\vx,\vw)=h(\vx,\vw)\ \
\text{for}\ \ (\vx,\vw)\in\Omega\times\s^1\\\rule{0ex}{1.0em}
u_{j,\l}(t,\vx_0,\vw)-\left(1-\dfrac{1}{j}\right)\pp[u_{j,\l}](t,\vx_0)=\g(t,\vx_0,\vw)\ \ \text{for}\ \ t\in[0,\infty),\ \
\vx_0\in\p\Omega\ \ \text{and}\ \ \vw\cdot\vn<0,
\end{array}
\right.
\end{eqnarray}
for $\l>0$, $j\in\mathbb{N}$ and $j\geq \dfrac{2}{\l}$. We iteratively construct
an approximating sequence $\{u^k_{j,\l}\}_{k=0}^{\infty}$ where $u^0_{j,\l}=0$ and
\begin{eqnarray}\label{penalty iteration}
\left\{
\begin{array}{l}\displaystyle
\e^2\dt u^k_{\j,\l}+\e\vw\cdot\nx u_{j,\l}^k+(1+\l)u_{j,\l}^k-\bar u_{j,\l}^{k-1}=\ss(t,\vx,\vw)\ \ \text{for}\ \ (t,\vx,\vw)\in[0,\infty)\times\Omega\times\s^1,\\\rule{0ex}{1.0em} u_{j}^k(0,\vx,\vw)=h(\vx,\vw)\ \
\text{for}\ \ (\vx,\vw)\in\Omega\times\s^1\\\rule{0ex}{1.0em}
u_{j,\l}^k(t,\vx_0,\vw)-\left(1-\dfrac{1}{j}\right)\pp[u_{j,\l}^{k-1}](t,\vx_0)=\g(t,\vx_0,\vw)
\ \ \text{for}\ \ t\in[0,\infty),\ \
\vx_0\in\p\Omega\ \ \text{and}\ \ \vw\cdot\vn<0.
\end{array}
\right.
\end{eqnarray}
By Lemma \ref{well-posedness lemma 1}, this sequence is
well-defined and $\im{u_{j,\l}^k}{[0,\infty)\times\Omega\times\s^1}<\infty$.
We rewrite equation (\ref{penalty iteration}) along the
characteristics as
\begin{eqnarray}
u_{j,\l}^k(t,\vx,\vw)
&=&{\bf 1}_{\{t\geq \e^2t_b\}}\Bigg( \left(\g+\left(1-\dfrac{1}{j}\right)\pp[u_{j,\l}^{k-1}]\right)\Big(t-\e^2t_b,\vx-(\e\vw)
t_b,\vw\Big)\ue^{-(1+\l)t_b}\\
&&+\int_{0}^{t_b}(\bar u_{j,\l}^{k-1}+\ss)\Big(t-\e^2(t_b-s),\vx-\e(t_b-s)\vw,\vw\Big)\ue^{-(1+\l)(t_b-s)}\ud{s}\Bigg)\no\\
&&+{\bf 1}_{\{t\leq \e^2t_b\}}\Bigg(\h\left(\vx-(\e\vw)\frac{
t}{\e^2},\vw\right)\ue^{-(1+\l)\frac{t}{\e^2}}\no\\
&&+\int_{0}^{\frac{t}{\e^2}}(\bar
u_{j,\l}^{k-1}+\ss)\left(\e^2s,\vx-\e\left(\frac{t}{\e^2}-s\right)\vw,\vw\right)\ue^{-(1+\l)(\frac{t}{\e^2}-s)}\ud{s}\Bigg)\no,
\end{eqnarray}
We define the difference $v^k_{j,\l}=u^{k}_{j,\l}-u^{k-1}_{j,\l}$ for $k\geq1$. Then
$v^k_{j,\l}$ satisfies
\begin{eqnarray}
v^{k}_{j,\l}(t,\vx,\vw)
&=&{\bf 1}_{\{t\geq \e^2t_b\}}\Bigg( \left(1-\dfrac{1}{j}\right)\pp[v_{j,\l}^{k-1}]\Big(t-\e^2t_b,\vx-(\e\vw)
t_b\Big)\ue^{-(1+\l)t_b}\\
&&+\int_{0}^{t_b}\bar v_{j,\l}^{k-1}\Big(t-\e^2(t_b-s),\vx-\e(t_b-s)\vw,\vw\Big)\ue^{-(1+\l)(t_b-s)}\ud{s}\Bigg)\no\\
&&+{\bf 1}_{\{t\leq \e^2t_b\}}\Bigg(\int_{0}^{\frac{t}{\e^2}}\bar
v_{j,\l}^{k-1}\left(\e^2s,\vx-\e\left(\frac{t}{\e^2}-s\right)\vw,\vw\right)\ue^{-(1+\l)(\frac{t}{\e^2}-s)}\ud{s}\Bigg)\no.
\end{eqnarray}
Since $\im{\bar
v^k_{j,\l}}{[0,\infty)\times\Omega\times\s^1}\leq\im{v^k_{j,\l}}{[0,\infty)\times\Omega\times\s^1}$ and $\im{\pp
[v^k_{j,\l}]}{[0,\infty)\times\Gamma^+}\leq \im{v^k_{j,\l}}{[0,\infty)\times\Omega\times\s^1}$, we can directly estimate
\begin{eqnarray}
&&\im{v^{k}_{j,\l}}{[0,\infty)\times\Omega\times\s^1}\\
&\leq&{\bf 1}_{\{t\geq \e^2t_b\}}\Bigg( \left(1-\dfrac{1}{j}\right)\ue^{-(1+\l)t_b}\im{v^{k-1}_{j,\l}}{[0,\infty)\times\Omega\times\s^1}
+\im{v^{k-1}_{j,\l}}{[0,\infty)\times\Omega\times\s^1}\int_{0}^{t_b}\ue^{-(1+\l)(t_b-s)}\ud{s}\Bigg)\no\\
&&+{\bf 1}_{\{t\leq \e^2t_b\}}\Bigg(\im{v^{k-1}_{j,\l}}{[0,\infty)\times\Omega\times\s^1}\int_{0}^{\frac{t}{\e^2}}\ue^{-(1+\l)(\frac{t}{\e^2}-s)}\ud{s}\Bigg)\no,\\
&\leq&\left(1-\dfrac{1}{j}\right)\ue^{-(1+\l)t_b}\im{v^{k-1}_{j,\l}}{[0,\infty)\times\Omega\times\s^1}
+\frac{1}{1+\l}\bigg(1-\ue^{-(1+\l)t_b}\bigg)\im{v^{k-1}_{j,\l}}{[0,\infty)\times\Omega\times\s^1}\no\\
&\leq&\left(1-\dfrac{1}{j}\right)\im{v^{k-1}_{j,\l}}{[0,\infty)\times\Omega\times\s^1},\no
\end{eqnarray}
since $j\geq\dfrac{2}{\l}$.
Thus, this is a contractive iteration. Considering $v^1_{j,\l}=u^1_{j,\l}$, we
have
\begin{eqnarray}
\im{v^{k}_{j,\l}}{[0,\infty)\times\Omega\times\s^1}\leq\left(1-\dfrac{1}{j}\right)^{k-1}\im{u^{1}_{j,\l}}{[0,\infty)\times\Omega\times\s^1}.
\end{eqnarray}
for $k\geq1$. Therefore, $u^k_{j,\l}$ converges strongly in $L^{\infty}$ to
the limiting solution $u_{j,\l}$ satisfying
\begin{eqnarray}\label{well-posedness temp 1}
\im{u_{j,\l}}{[0,\infty)\times\Omega\times\s^1}\leq\sum_{k=1}^{\infty}\im{v^{k}_{j,\l}}{[0,\infty)\times\Omega\times\s^1}\leq j\im{u^1_{j,\l}}{[0,\infty)\times\Omega\times\s^1}.
\end{eqnarray}
Since $u^1_{j,\l}$ can be expressed as
\begin{eqnarray}
u_{j,\l}^1(t,\vx,\vw)
&=&{\bf 1}_{\{t\geq \e^2t_b\}}\bigg(\g\Big(t-\e^2t_b,\vx-(\e\vw)
t_b,\vw\Big)\ue^{-(1+\l)t_b}\\
&&+\int_{0}^{t_b}\ss\Big(t-\e^2(t_b-s),\vx-\e(t_b-s)\vw,\vw\Big)\ue^{-(1+\l)(t_b-s)}\ud{s}\bigg)\no\\
&&+{\bf 1}_{\{t\leq \e^2t_b\}}\bigg(\h\left(\vx-(\e\vw)\frac{
t}{\e^2},\vw\right)\ue^{-(1+\l)\frac{t}{\e^2}}\no\\
&&+\int_{0}^{\frac{t}{\e^2}}\ss\left(\e^2s,\vx-\e\left(\frac{t}{\e^2}-s\right)\vw,\vw\right)\ue^{-(1+\l)(\frac{t}{\e^2}-s)}\ud{s}\bigg)\no,
\end{eqnarray}
Based on Lemma \ref{well-posedness lemma 1}, we can directly
estimate
\begin{eqnarray}\label{well-posedness temp 2}
\im{u_{j,\l}^1}{[0,\infty)\times\Omega\times\s^1}\leq
\im{\g}{[0,\infty)\times\Gamma^-}+\im{\h}{\Omega\times\s^1}+\im{\ss}{[0,\infty)\times\Omega\times\s^1}.
\end{eqnarray}
Combining (\ref{well-posedness temp 1}) and (\ref{well-posedness
temp 2}), we can naturally obtain the estimate
\begin{eqnarray}
\im{u_{j,\l}}{[0,\infty)\times\Omega\times\s^1}\leq j\bigg(\im{\g}{[0,\infty)\times\Gamma^-}+\im{\h}{\Omega\times\s^1}+\im{\ss}{[0,\infty)\times\Omega\times\s^1}\bigg).
\end{eqnarray}
However, this estimate is not uniform in $j$, so we cannot directly take limit $j\rt\infty$.\\
\ \\
Step 2: Energy Estimate of $u_{j,\l}$.\\
Multiplying $u_{j,\l}$ on both sides of (\ref{penalty neutron}) and integrating over $[0,t]\times\Omega\times\s^1$, by Lemma \ref{well-posedness lemma 3}, we get the energy estimate
\begin{eqnarray}
\frac{\e^2}{2}\nm{u_{j,\l}(t)}_{L^2(\Omega\times\s^1)}^2-\frac{\e^2}{2}\nm{u_{j,\l}(0)}_{L^2(\Omega\times\s^1)}^2+\frac{\e}{2}\iint_{[0,t)\times\Gamma}\abs{u_{j,\l}}^2\ud{\gamma}\\
+\l\nm{u_{j,\l}}_{L^2([0,t)\times\Omega\times\s^1)}^2+\nm{u_{j,\l}-\bar
u_{j,\l}}_{L^2([0,t)\times\Omega\times\s^1)}^2&=&\iint_{[0,t)\times\Omega\times\s^1}\ss u_{j,\l}.\no
\end{eqnarray}
A direct computation shows
\begin{eqnarray}
\frac{\e}{2}\int_{[0,t)\times\Gamma}\abs{u_{j,\l}}^2\ud{\gamma}
&=&\frac{\e}{2}\nm{u_{j,\l}}^2_{L^2([0,t)\times\Gamma^+)}-\frac{\e}{2}\nm{\left(1-\dfrac{1}{j}\right)\pp[u_{j,\l}]
+\g}^2_{L^2([0,t)\times\Gamma^-)}\\
&=&\frac{\e}{2}\bigg(\nm{u_{j,\l}}^2_{L^2([0,t)\times\Gamma^+)}-\nm{\left(1-\dfrac{1}{j}\right)\pp[u_{j,\l}]}^2_{L^2([0,t)\times\Gamma^+)}\bigg)\no\\
&&-\frac{\e}{2}\nm{\g}_{L^2([0,t)\times\Gamma^-)}^2-\e\left(1-\dfrac{1}{j}\right)\int_{[0,t)\times\Gamma^-}\g\pp[u_{j,\l}]\abs{\vw\cdot\vn}\ud{\gamma}\ud{t}.\nonumber
\end{eqnarray}
Hence, we have
\begin{eqnarray}
&&\frac{\e^2}{2}\nm{u_{j,\l}(t)}_{L^2(\Omega\times\s^1)}^2+\frac{\e}{2}\bigg(\nm{u_{j,\l}}^2_{L^2([0,t)\times\Gamma^+)}
-\nm{\left(1-\dfrac{1}{j}\right)\pp[u_{j,\l}]}^2_{L^2([0,t)\times\Gamma^+)}\bigg)\\
&&+\l\nm{u_{j,\l}}_{L^2([0,t)\times\Omega\times\s^1)}^2+\nm{u_{j,\l}-\bar
u_{j,\l}}_{L^2([0,t)\times\Omega\times\s^1)}^2\no\\
&=&\iint_{[0,t)\times\Omega\times\s^1}\ss u_{j,\l}+\frac{\e^2}{2}\nm{\h}_{L^2(\Omega\times\s^1)}^2+\frac{\e}{2}\nm{\g}_{L^2([0,t)\times\Gamma^-)}^2
+\e\left(1-\dfrac{1}{j}\right)\int_{[0,t)\times\Gamma^-}\g\pp[u_{j,\l}]\abs{\vw\cdot\vn}\ud{\gamma}\ud{t}.\no
\end{eqnarray}
Noting the fact that
\begin{eqnarray}
\e\bigg(\nm{u_{j,\l}}^2_{L^2([0,t)\times\Gamma^+)}-\nm{\pp[u_{j,\l}]}^2_{L^2([0,t)\times\Gamma^+)}\bigg)=\e\nm{(1-\pp)[u_{j,\l}]}^2_{L^2([0,t)\times\Gamma^+)},
\end{eqnarray}
we deduce
\begin{eqnarray}
\\
&&\frac{\e^2}{2}\nm{u_{j,\l}(t)}_{L^2(\Omega\times\s^1)}^2+\frac{\e}{2}\nm{(1-\pp)[u_{j,\l}]}^2_{L^2([0,t)\times\Gamma^+)}
+\l\nm{u_{j,\l}}_{L^2([0,t)\times\Omega\times\s^1)}^2+\nm{u_{j,\l}-\bar
u_{j,\l}}_{L^2([0,t)\times\Omega\times\s^1)}^2\no\\
&\leq&\iint_{[0,t)\times\Omega\times\s^1}\ss u_{j,\l}+\frac{\e^2}{2}\nm{\h}_{L^2(\Omega\times\s^1)}^2+\frac{\e}{2}\nm{\g}_{L^2([0,t)\times\Gamma^-)}^2
+\e\int_{[0,t)\times\Gamma^-}\g\pp[u_{j,\l}]\abs{\vw\cdot\vn}\ud{\gamma}\ud{t}.\no
\end{eqnarray}
Applying Cauchy's inequality, we obtain that for $\eta>0$,
\begin{eqnarray}\label{wt 02}
\\
&&\frac{\e^2}{2}\nm{u_{j,\l}(t)}_{L^2(\Omega\times\s^1)}^2+\frac{\e}{2}\nm{(1-\pp)[u_{j,\l}]}^2_{L^2([0,t)\times\Gamma^+)}
+\l\nm{u_{j,\l}}_{L^2([0,t)\times\Omega\times\s^1)}^2+\nm{u_{j,\l}-\bar
u_{j,\l}}_{L^2([0,t)\times\Omega\times\s^1)}^2\no\\
&\leq&\iint_{[0,t)\times\Omega\times\s^1}\ss u_{j,\l}+\frac{\e^2}{2}\nm{\h}_{L^2(\Omega\times\s^1)}^2+\bigg(1+\frac{4}{\eta}\bigg)\nm{\g}_{L^2([0,t)\times\Gamma^-)}^2
+\e^2\eta\nm{\pp[u_{j,\l}]}^2_{L^2([0,t)\times\Gamma^+)}.\no
\end{eqnarray}
Now the only difficulty is $\e^2\eta\nm{\pp[u_{j,\l}]}^2_{L^2([0,t)\times\Gamma^+)}$, which we cannot bound directly.\\
\ \\
Step 3: Estimate of $\nm{\pp[u_{j,\l}]}^2_{L^2([0,t)\times\Gamma^+)}$.\\
Multiplying $u_{j,\l}$ on both sides of (\ref{penalty neutron}), we have
\begin{eqnarray}\label{wt 03}
\frac{\e^2}{2}\dt(u_{j,\l}^2)+\frac{\e}{2}\vw\cdot\nx(u_{j,\l}^2)=-\l u_{j,\l}^2-u_{j,\l}(u_{j,\l}-\bar u_{j,\l})+fu_{j,\l}.
\end{eqnarray}
Taking absolute value on both sides of (\ref{wt 03}) and integrating over $[0,t]\times\Omega\times\s^1$, we get
\begin{eqnarray}
&&\nm{(\e\dt+\vw\cdot\nx)(u_{j,\l}^2)}_{L^1([0,t]\times\Omega\times\s^1)}\\
&\leq& \frac{2\l}{\e}\nm{u_{j,\l}}_{L^2([0,t]\times\Omega\times\s^1)}^2+\frac{2}{\e}\nm{u_{j,\l}-\bar
u_{j,\l}}_{L^2([0,t]\times\Omega\times\s^1)}^2+\frac{2}{\e}\iint_{[0,t]\times\Omega\times\s^1}\ss u_{j,\l}.\no
\end{eqnarray}
Based on (\ref{wt 02}), we can further obtain
\begin{eqnarray}
\nm{(\e\dt+\vw\cdot\nx)(u_{j,\l}^2)}_{L^1([0,t]\times\Omega\times\s^1)}&\leq& \frac{1}{\e}\bigg(1+\frac{4}{\eta}\bigg)\nm{\g}_{L^2([0,t]\times\Gamma^-)}^2+\e\nm{\h}_{L^2(\Omega\times\s^1)}^2\\
&&+\e\eta\nm{\pp[u_{j,\l}]}^2_{L^2([0,t]\times\Gamma^+)}+\frac{4}{\e}\iint_{[0,t]\times\Omega\times\s^1}\ss u_{j,\l}.\no
\end{eqnarray}
Hence, by Lemma \ref{well-posedness lemma 2} and (\ref{wt 02}), we know for given $\d>0$
\begin{eqnarray}\label{wt 04}
&&\nm{u_{j,\l}^2{\bf{1}}_{\{\Gamma^{\pm}\backslash\Gamma_{\pm}^{\delta}\}}}_{L^1([0,t]\times\Gamma^{\pm})}\\
&\leq&
C(\delta)\bigg(\e\nm{\h}_{L^2(\Omega\times\s^1)}^2+\nm{u_{j,\l}}_{L^2([0,t]\times\Omega\times\s^1)}^2
+\nm{(\e\dt+\vw\cdot\nx)(u_{j,\l}^2)}_{L^1([0,t]\times\Omega\times\s^1)}\bigg)\no\\
&\leq&C(\delta)\Bigg(\left(\frac{1}{\e}+\frac{1}{\l}\right)\e^2\nm{\h}_{L^2(\Omega\times\s^1)}^2
+\left(\frac{1}{\e}+\frac{1}{\l}\right)\bigg(1+\frac{1}{\eta}\bigg)\nm{\g}_{L^2([0,t]\times\Gamma^-)}^2\no\\
&&+\left(\frac{1}{\e}+\frac{1}{\l}\right)\e^2\eta\nm{\pp[u_{j,\l}]}^2_{L^2([0,t]\times\Gamma^+)}
+\left(\frac{1}{\e}+\frac{1}{\l}\right)\iint_{[0,t]\times\Omega\times\s^1}fu_{j,\l}.\Bigg).\no
\end{eqnarray}
Noting the fact that
\begin{eqnarray}
\nm{\pp[u_{j,\l}{\bf{1}}_{\{\Gamma_{\pm}\backslash\Gamma_{\pm}^{\delta}\}}]}_{L^2([0,t]\times\Gamma^+)}\leq \nm{
u_{j,\l}{\bf{1}}_{\{\Gamma_{\pm}\backslash\Gamma_{\pm}^{\delta}\}}}_{L^2([0,t]\times\Gamma^+)},
\end{eqnarray}
and for $\d$ sufficiently small, we have
\begin{eqnarray}
\nm{\pp[u_{j,\l}{\bf{1}}_{\{\Gamma_{\pm}\backslash\Gamma_{\pm}^{\delta}\}}]}_{L^2([0,t]\times\Gamma^+)}\geq \half\nm{\pp[u_{j,\l}]}_{L^2([0,t]\times\Gamma^+)}.
\end{eqnarray}
Combining with (\ref{wt 04}), we naturally obtain
\begin{eqnarray}
\nm{\pp[u_{j,\l}]}_{L^2([0,t]\times\Gamma^+)}^2&\leq&2\nm{\pp[u_{j,\l}{\bf{1}}_{\{\Gamma_{\pm}\backslash\Gamma_{\pm}^{\delta}\}}]}_{L^2([0,t]\times\Gamma^+)}^2\leq2\nm{
u_{j,\l}{\bf{1}}_{\{\Gamma_{\pm}\backslash\Gamma_{\pm}^{\delta}\}}}_{L^2([0,t]\times\Gamma^+)}^2\\
&\leq&C(\delta)\Bigg(\left(\frac{1}{\e}+\frac{1}{\l}\right)\e^2\nm{\h}_{L^2(\Omega\times\s^1)}^2
+\left(\frac{1}{\e}+\frac{1}{\l}\right)\bigg(1+\frac{1}{\eta}\bigg)\nm{\g}_{L^2([0,t]\times\Gamma^-)}^2\no\\
&&+\left(\frac{1}{\e}+\frac{1}{\l}\right)\e^2\eta\nm{\pp[u_{j,\l}]}^2_{L^2([0,t]\times\Gamma^+)}
+\left(\frac{1}{\e}+\frac{1}{\l}\right)\iint_{[0,t]\times\Omega\times\s^1}fu_{j,\l}.\Bigg).\no
\end{eqnarray}
For fixed $\d$, taking $\eta>0$ sufficiently small, we obtain
\begin{eqnarray}\label{wt 07}
\nm{\pp[u_{j,\l}]}_{L^2([0,t]\times\Gamma^+)}^2
&\leq&C\left(\frac{1}{\e}+\frac{1}{\l}\right)\Bigg(\e^2\nm{\h}_{L^2(\Omega\times\s^1)}^2+\nm{\g}_{L^2([0,t]\times\Gamma^-)}^2
+\iint_{[0,t]\times\Omega\times\s^1}fu_{j,\l}.\Bigg).
\end{eqnarray}
\ \\
Step 4: Limit $j\rt\infty$.\\
Plugging (\ref{wt 07}) into (\ref{wt 02}), we deduce
\begin{eqnarray}\label{wt 08}
\\
&&\e^2\nm{u_{j,\l}(t)}_{L^2(\Omega\times\s^1)}^2+\e\nm{(1-\pp)[u_{j,\l}]}^2_{L^2([0,t)\times\Gamma^+)}+\l\nm{u_{j,\l}}_{L^2([0,t)\times\Omega\times\s^1)}^2+\nm{u_{j,\l}-\bar
u_{j,\l}}_{L^2([0,t)\times\Omega\times\s^1)}^2\no\\
&\leq&\frac{C}{\l}\bigg(\iint_{[0,t)\times\Omega\times\s^1}\ss u_{j,\l}+\e^2\nm{\h}_{L^2(\Omega\times\s^1)}^2+\nm{\g}_{L^2([0,t)\times\Gamma^-)}^2\bigg).\no
\end{eqnarray}
Applying Cauchy's inequality, we have for $C_0>0$ small,
\begin{eqnarray}
\frac{1}{\l}\iint_{[0,t)\times\Omega\times\s^1}\ss u_{j,\l}\leq \frac{1}{C_0\l^2}\nm{\ss}_{L^2([0,t)\times\Omega\times\s^1)}^2+C_0\nm{u_{j,\l}}_{L^2([0,t)\times\Omega\times\s^1)}^2.
\end{eqnarray}
Therefore, absorbing $\dfrac{C_0}{\l}\nm{u_{j,\l}}_{L^2([0,t)\times\Omega\times\s^1)}^2$ into the left-hand side, we obtain
\begin{eqnarray}
\\
&&\e^2\nm{u_{j,\l}(t)}_{L^2(\Omega\times\s^1)}^2+\e\nm{(1-\pp)[u_{j,\l}]}^2_{L^2([0,t)\times\Gamma^+)}+\l\nm{u_{j,\l}}_{L^2([0,t)\times\Omega\times\s^1)}^2+\nm{u_{j,\l}-\bar
u_{j,\l}}_{L^2([0,t)\times\Omega\times\s^1)}^2\no\\
&\leq&C\bigg(\frac{1}{\l^2}\nm{\ss}_{L^2([0,t)\times\Omega\times\s^1)}^2+\frac{\e^2}{\l}\nm{\h}_{L^2(\Omega\times\s^1)}^2+\frac{1}{\l}\nm{\g}_{L^2([0,t)\times\Gamma^-)}^2\bigg).\no
\end{eqnarray}
This is a uniform estimate in $j$. We may take weak limit $u_{j,\l}\rightharpoonup u_{\l}$ in $L^2([0,t)\times\Omega\times\s^1)$ as $j\rt\infty$. Then by the weak formulation and the weak lower semi-continuity of $L^2$ norms, there exists a solution $u_{\l}$ to the penalized equation
\begin{eqnarray}\label{penalty neutron.}
\left\{
\begin{array}{l}\displaystyle
\e^2\dt u_{\l}+\e\vw\cdot\nx u_{\l}+(1+\l)u_{\l}-\bar
u_{\l}=\ss(t,\vx,\vw)\ \ \text{for}\ \ (t,\vx,\vw)\in[0,\infty)\times\Omega\times\s^1,\\\rule{0ex}{1.0em}
u_{\l}(0,\vx,\vw)=h(\vx,\vw)\ \
\text{for}\ \ (\vx,\vw)\in\Omega\times\s^1\\\rule{0ex}{1.0em}
u_{\l}(\vx_0,\vw)-\pp[u_{\l}](\vx_0)=\g(t,\vx_0,\vw)\ \ \text{for}\ \ t\in[0,\infty),\ \
\vx_0\in\p\Omega\ \ \text{and}\ \ \vw\cdot\vn<0,
\end{array}
\right.
\end{eqnarray}
and satisfies the estimate
\begin{eqnarray}
\\
&&\e^2\nm{u_{\l}(t)}_{L^2(\Omega\times\s^1)}^2+\e\nm{(1-\pp)[u_{\l}]}^2_{L^2([0,t)\times\Gamma^+)}+\l\nm{u_{\l}}_{L^2([0,t)\times\Omega\times\s^1)}^2+\nm{u_{\l}-\bar
u_{j,\l}}_{L^2([0,t)\times\Omega\times\s^1)}^2\no\\
&\leq&C\bigg(\frac{1}{\l^2}\nm{\ss}_{L^2([0,t)\times\Omega\times\s^1)}^2+\frac{\e^2}{\l}\nm{\h}_{L^2(\Omega\times\s^1)}^2+\frac{1}{\l}\nm{\g}_{L^2([0,t)\times\Gamma^-)}^2\bigg).\no
\end{eqnarray}
However, this estimate still blows up when $\l\rt0$, so we need to find a uniform estimate in $\l$.\\
\ \\
Step 5: Kernel Estimate.\\
Applying Lemma \ref{well-posedness lemma 3} to the
equation (\ref{penalty neutron.}). Then for any
$\phi\in L^{2}([0,t)\times\Omega\times\s^1)$ satisfying
$\e\dt\phi+\vw\cdot\nx\phi\in L^2([0,t)\times\Omega\times\s^1)$
and $\phi\in L^{2}([0,t)\times\Gamma)$, we have
\begin{eqnarray}\label{well-posedness temp 4}
\\
&&
-\e^2\int_0^t\iint_{\Omega\times\s^1}\dt\phi
u_{\l}-\e\int_0^t\iint_{\Omega\times\s^1}(\vw\cdot\nx\phi)u_{\l}+\l\int_0^t\iint_{\Omega\times\s^1}u_{\l}\phi+\int_0^t\iint_{\Omega\times\s^1}(u_{\l}-\bar
u_{\l})\phi\no\\
&=&-\e^2\iint_{\Omega\times\s^1}u_{\l}(t)\phi(t)+
\e^2\iint_{\Omega\times\s^1}u_{\l}(0)\phi(0)-\e\int_0^t\int_{\Gamma}u_{\l}\phi\ud{\gamma}+\int_0^t\iint_{\Omega\times\s^1}\ss\phi.\no
\end{eqnarray}
Our goal is to choose a particular test function $\phi$. We first
construct an auxiliary function $\zeta(t)$. Since $u_{\l}(t)\in
L^{2}(\Omega\times\s^1)$, it naturally implies $\bar
u_{\l}(t)\in L^{2}(\Omega)$. We define $\zeta(t,\vx)$ on $\Omega$
satisfying
\begin{eqnarray}\label{test temp 1}
\left\{
\begin{array}{rcl}
\Delta_x \zeta(t)&=&\bar u_{\l}(t,\vx)\ \ \text{in}\ \
\Omega,\\\rule{0ex}{1.0em} \zeta(t)&=&0\ \ \text{on}\ \ \p\Omega.
\end{array}
\right.
\end{eqnarray}
In the bounded domain $\Omega$, based on the standard elliptic
estimates, we have
\begin{eqnarray}\label{test temp 3}
\nm{\zeta(t)}_{H^2(\Omega)}\leq C\nm{\bar
u_{\l}(t)}_{L^2(\Omega)}.
\end{eqnarray}
We plug the test function
\begin{eqnarray}\label{test temp 2}
\phi(t)=-\vw\cdot\nx\zeta(t)
\end{eqnarray}
into the weak formulation (\ref{well-posedness temp 4}) and estimate
each term there. Naturally, we have
\begin{eqnarray}\label{test temp 4}
\nm{\phi(t)}_{L^2(\Omega)}\leq C\nm{\zeta(t)}_{H^1(\Omega)}\leq
C \nm{\bar u_{\l}(t)}_{L^2(\Omega)}.
\end{eqnarray}
Easily we can decompose
\begin{eqnarray}\label{test temp 5}
-\e\int_0^t\iint_{\Omega\times\s^1}(\vw\cdot\nx\phi)u_{\l}&=&-\e\int_0^t\iint_{\Omega\times\s^1}(\vw\cdot\nx\phi)\bar
u_{\l}-\e\int_0^t\iint_{\Omega\times\s^1}(\vw\cdot\nx\phi)(u_{\l}-\bar
u_{\l}).
\end{eqnarray}
We estimate the two term on the right-hand side of (\ref{test temp
5}) separately. By (\ref{test temp 1}) and (\ref{test temp 2}), we
have
\begin{eqnarray}\label{wellposed temp 1}
-\e\int_0^t\iint_{\Omega\times\s^1}(\vw\cdot\nx\phi)\bar
u_{\l}&=&\e\int_0^t\iint_{\Omega\times\s^1}\bar
u_{\l}\bigg(w_1(w_1\p_{11}\zeta+w_2\p_{12}\zeta)+w_2(w_1\p_{12}\zeta+w_2\p_{22}\zeta)\bigg)\\
&=&\e\int_0^t\iint_{\Omega\times\s^1}\bar
u_{\l}\bigg(w_1^2\p_{11}\zeta+w_2^2\p_{22}\zeta\bigg)\nonumber\\
&=&\e\pi\int_0^t\int_{\Omega}\bar u_{\l}(\p_{11}\zeta+\p_{22}\zeta)\nonumber\\
&=&\e\pi\nm{\bar u_{\l}}_{L^2([0,t]\times\Omega)}^2\nonumber\\
&=&\frac{\e}{2}\nm{\bar
u_{\l}}_{L^2([0,t]\times\Omega\times\s^1)}^2\nonumber.
\end{eqnarray}
In the second equality, above cross terms vanish due to the symmetry
of the integral over $\s^1$. On the other hand, for the second term
in (\ref{test temp 5}), H\"older's inequality and the elliptic
estimate imply
\begin{eqnarray}\label{wellposed temp 2}
-\e\int_0^t\iint_{\Omega\times\s^1}(\vw\cdot\nx\phi)(u_{\l}-\bar
u_{\l})&\leq&C \e\nm{u_{\l}-\bar u_{\l}}_{L^2([0,t]\times\Omega\times\s^1)}\bigg(\int_0^t\nm{\zeta}^2_{H^2(\Omega)}\bigg)^{1/2}\\
&\leq&C \e\nm{u_{\l}-\bar
u_{\l}}_{L^2([0,t]\times\Omega\times\s^1)}\nm{\bar
u_{\l}}_{L^2([0,t]\times\Omega\times\s^1)}\nonumber.
\end{eqnarray}
Based on (\ref{test temp 4}), the trace theorem and
H\"older's inequality, we have
\begin{eqnarray}\label{wellposed temp 3}
\e\int_0^t\int_{\Gamma}u_{\l}\phi\ud{\gamma}
&=&\e\int_0^t\int_{\Gamma}\pp[u_{\l}]\phi\ud{\gamma}+\e\int_0^t\int_{\Gamma^-}\g\phi\ud{\gamma}+\e\int_0^t\int_{\Gamma^+}(1-\pp)[u_{\l}]\phi\ud{\gamma}
\\
&=&\e\int_0^t\int_{\Gamma^+}(1-\pp)[u_{\l}]\phi\ud{\gamma}
+\e\int_0^t\int_{\Gamma^-}\g\phi\ud{\gamma}\no\\
&\leq&\e\nm{\phi}_{L^2([0,t]\times\Gamma)}\bigg(\nm{(1-\pp)[u_{\l}]}_{L^2([0,t]\times\Gamma^+)}+\nm{\g}_{L^2([0,t]\times\Gamma^-)}\bigg)\no\\
&\leq&\e\nm{\bar u_{\l}}_{L^2([0,t]\times\Omega\times\s^1)}\bigg(\nm{(1-\pp)[u_{\l}]}_{L^2([0,t]\times\Gamma^+)}+\nm{\g}_{L^2([0,t]\times\Gamma^-)}\bigg).\no
\end{eqnarray}
Also, we obtain
\begin{eqnarray}\label{wellposed temp 4}
\l\int_0^t\iint_{\Omega\times\s^1}u_{\l}\phi&=&\l\int_0^t\iint_{\Omega\times\s^1}\bar
u_{\l}\phi+\l\int_0^t\iint_{\Omega\times\s^1}(u_{\l}-\bar u_{\l})\phi=\l\int_0^t\iint_{\Omega\times\s^1}(u_{\l}-\bar u_{\l})\phi\\
&\leq&C \l\nm{\bar
u_{\l}}_{L^2([0,t]\times\Omega\times\s^1)}\nm{u_{\l}-\bar
u_{\l}}_{L^2([0,t]\times\Omega\times\s^1)}\nonumber,
\end{eqnarray}
\begin{eqnarray}\label{wellposed temp 5}
\int_0^t\iint_{\Omega\times\s^1}(u_{\l}-\bar u_{\l})\phi\leq
C \nm{\bar
u_{\l}}_{L^2([0,t]\times\Omega\times\s^1)}\nm{u_{\l}-\bar
u_{\l}}_{L^2([0,t]\times\Omega\times\s^1)},
\end{eqnarray}
\begin{eqnarray}\label{wellposed temp 6}
\int_0^t\iint_{\Omega\times\s^1}\ss\phi\leq C \nm{\bar
u_{\l}}_{L^2([0,t]\times\Omega\times\s^1)}\nm{\ss}_{L^2([0,t]\times\Omega\times\s^1)}.
\end{eqnarray}
On the other hand, we may directly estimate
\begin{eqnarray}\label{wellposed temp 7}
\e^2\iint_{\Omega\times\s^1}u_{\l}(t)\phi(t)&\leq& C\e^2\nm{\phi(t)}_{L^2(\Omega\times\s^1)}\nm{u_{\l}(t)}_{L^2(\Omega\times\s^1)}\\
&\leq& C\e^2\nm{\bar
u_{\l}(t)}_{L^2(\Omega\times\s^1)}\nm{u_{\l}(t)}_{L^2(\Omega\times\s^1)}\leq C\e^2\nm{u_{\l}(t)}^2_{L^2(\Omega\times\s^1)}.\no
\end{eqnarray}
Similarly, we know
\begin{eqnarray}
\e^2\iint_{\Omega\times\s^1}u_{\l}(0)\phi(0)\leq  C\e^2\nm{h}^2_{L^2(\Omega\times\s^1)}.
\end{eqnarray}
Then the only remaining term is
\begin{eqnarray}\label{wellposed temp 8}
-\e^2\int_0^t\iint_{\Omega\times\s^1}\dt\phi u_{\l}&=&-\e^2\int_0^t\iint_{\Omega\times\s^1}\dt\phi (u_{\l}-\bar u_{\l})\\
&\leq&\e^2\nm{\dt\nabla\zeta}_{L^2([0,t]\times\Omega\times\s^1)}\nm{u_{\l}-\bar
u_{\l}}_{L^2([0,t]\times\Omega\times\s^1)}.\no
\end{eqnarray}
Now we have to tackle $\nm{\dt\nabla\zeta}_{L^2([0,t]\times\Omega\times\s^1)}$.
For test function $\phi(\vx,\vw)$ which is independent of time $t$,
in time interval $[t-\delta,t]$ the weak formulation in
(\ref{well-posedness temp 4}) can be simplified as
\begin{eqnarray}\label{test temp 6}
&&\e^2\iint_{\Omega\times\s^1}u_{\l}(t)\phi-\e^2\iint_{\Omega\times\s^1}u_{\l}({t-\delta})\phi\\
&&
-\e\int_{t-\delta}^t\iint_{\Omega\times\s^1}(\vw\cdot\nx\phi)u_{\l}+\l\int_{t-\delta}^t\iint_{\Omega\times\s^1}u_{\l}\phi+\int_{t-\delta}^t\iint_{\Omega\times\s^1}(u_{\l}-\bar
u_{\l})\phi\no\\
&=&-\e\int_{t-\delta}^t\int_{\Gamma}u_{\l}\phi\ud{\gamma}
+\int_{t-\delta}^t\iint_{\Omega\times\s^1}\ss\phi.\no
\end{eqnarray}
Taking difference quotient as $\delta\rt0$, we know
\begin{eqnarray}
\frac{\e^2\displaystyle\iint_{\Omega\times\s^1}u_{\l}(t)\phi-\e^2\displaystyle\iint_{\Omega\times\s^1}u_{\l}({t-\delta})\phi}{\delta}\rt
\e^2\iint_{\Omega\times\s^1}\dt
u_{\l}(t)\phi.
\end{eqnarray}
Then (\ref{test temp 6}) can be simplified into
\begin{eqnarray}\label{test temp 7}
\e^2\iint_{\Omega\times\s^1}\dt u_{\l}(t)\phi
&=&-\l\iint_{\Omega\times\s^1}u_{\l}(t)\phi
+\e\iint_{\Omega\times\s^1}(\vw\cdot\nx\phi)u_{\l}(t)-\iint_{\Omega\times\s^1}(u_{\l}-\bar
u_{\l})(t)\phi\\
&&-\e\int_{\Gamma}u_{\l}(t)\phi\ud{\gamma}+\iint_{\Omega\times\s^1}f(t)\phi.\no
\end{eqnarray}
For fixed $t$, taking $\phi=-\Phi(\vx)$ which satisfies
\begin{eqnarray}
\left\{
\begin{array}{rcl}
\Delta \Phi&=&\dt\bar u(t,\vx)\ \ \text{in}\ \
\Omega,\\\rule{0ex}{1.0em} \Phi(t)&=&0\ \ \text{on}\ \ \p\Omega,
\end{array}
\right.
\end{eqnarray}
which further implies $\Phi=\dt\zeta$.
Then the left-hand side of (\ref{test temp 7}) is actually
\begin{eqnarray}
LHS&=&-\e^2\iint_{\Omega\times\s^1}\Phi\dt u_{\l}(t)=-\e^2\iint_{\Omega\times\s^1}\Phi\dt\bar u_{\l}\\
&=&-\e^2\iint_{\Omega\times\s^1}\Phi\Delta\Phi=\e^2\iint_{\Omega\times\s^1}\abs{\nabla\Phi}^2\no\\
&=&\e^2\nm{\dt\nabla\zeta(t)}_{L^2(\Omega\times\s^1)}^2.\no
\end{eqnarray}
By a similar argument as above and the Poincar\'e inequality, the right-hand side of
(\ref{test temp 7}) can be bounded as
\begin{eqnarray}
RHS&\leq& \nm{\dt\nabla\zeta(t)}_{L^2(\Omega\times\s^1)}\bigg(\nm{u_{\l}(t)-\bar
u_{\l}(t)}_{L^2(\Omega\times\s^1)}+\l\nm{\bar
u_{\l}(t)}_{L^2(\Omega\times\s^1)}
+\nm{\ss(t)}_{L^2(\Omega\times\s^1)}\bigg).
\end{eqnarray}
Note that the boundary terms vanish due to the construction of $\Phi$. Therefore, we have
\begin{eqnarray}
\e^2\nm{\dt\nabla\zeta(t)}_{L^2(\Omega\times\s^1)}&\leq&
\nm{u_{\l}(t)-\bar u_{\l}(t)}_{L^2(\Omega\times\s^1)}+\l\nm{\bar
u_{\l}(t)}_{L^2(\Omega\times\s^1)}
+\nm{\ss(t)}_{L^2(\Omega\times\s^1)}.
\end{eqnarray}
For all $t$, we can further integrate over $[0,t]$ to obtain
\begin{eqnarray}\label{wellposed temp 9}
\e^2\nm{\dt\nabla\zeta}_{L^2([0,t]\times\Omega\times\s^1)}
&\leq& \nm{u_{\l}-\bar
u_{\l}}_{L^2([0,t]\times\Omega\times\s^1)}+\l\nm{\bar
u_{\l}}_{L^2([0,t]\times\Omega\times\s^1)}
+\nm{\ss}_{L^2([0,t]\times\Omega\times\s^1)}.
\end{eqnarray}
Collecting terms in (\ref{wellposed temp 1}), (\ref{wellposed temp
2}), (\ref{wellposed temp 3}), (\ref{wellposed temp 4}),
(\ref{wellposed temp 5}), (\ref{wellposed temp 6}), (\ref{wellposed
temp 7}), (\ref{wellposed temp 8}), and (\ref{wellposed temp 9}), and using Cauchy's inequality, we
obtain
\begin{eqnarray}\label{well-posedness temp 3}
\\
\e\nm{\bar u_{\l}}_{L^2([0,t]\times\Omega\times\s^1)}
&\leq& C \bigg(\nm{u_{\l}-\bar
u_{\l}}_{L^2([0,t]\times\Omega\times\s^1)}+\e\nm{(1-\pp)[u_{\l}]}_{L^2([0,t]\times\Gamma^+)}+\nm{\ss}_{L^2([0,t]\times\Omega\times\s^1)}\no\\
&&+\e\tm{\g}{[0,t]\times\Gamma^-}+\e^{\frac{3}{2}}\nm{u_{\l}(t)}_{L^2(\Omega\times\s^1)}+\e^{\frac{3}{2}}\nm{h}_{L^2(\Omega\times\s^1)}\bigg).\no
\end{eqnarray}
When $0<\l<1$ and $0<\e<1$, we get the desired uniform estimate
with respect to $\lambda$.\\
\ \\
Step 6: Limit $\l\rt0$.\\
In the weak formulation (\ref{well-posedness temp 4}), we may take
the test function $\phi=u_{\l}$ to get the energy estimate
\begin{eqnarray}
\\
&&\frac{\e^2}{2}\nm{u_{\l}(t)}_{L^2(\Omega\times\s^1)}^2-\frac{\e^2}{2}\nm{h}_{L^2(\Omega\times\s^1)}^2
+\frac{\e}{2}\int_0^t\int_{\Gamma}\abs{u_{\l}}^2\ud{\gamma}+\l\nm{u_{\l}}_{L^2([0,t]\times\Omega\times\s^1)}^2+\nm{u_{\l}-\bar
u_{\l}}_{L^2([0,t]\times\Omega\times\s^1)}^2\no\\
&=&\int_0^t\iint_{\Omega\times\s^1}fu_{\l}.\no
\end{eqnarray}
Hence, this naturally implies
\begin{eqnarray}
\\
&&\frac{\e^2}{2}\nm{u_{\l}(t)}_{L^2(\Omega\times\s^1)}^2+\frac{\e}{2}\nm{(1-\pp)[u_{\l}]}^2_{L^2([0,t)\times\Gamma^+)}
+\l\nm{u_{\l}}_{L^2([0,t)\times\Omega\times\s^1)}^2+\nm{u_{\l}-\bar
u_{\l}}_{L^2([0,t)\times\Omega\times\s^1)}^2\no\\
&\leq&\iint_{[0,t)\times\Omega\times\s^1}\ss u_{\l}+\frac{\e^2}{2}\nm{\h}_{L^2(\Omega\times\s^1)}^2+\bigg(1+\frac{4}{\eta}\bigg)\nm{\g}_{L^2([0,t)\times\Gamma^-)}^2
+\e^2\eta\nm{\pp[u_{\l}]}^2_{L^2([0,t)\times\Gamma^+)}.\no
\end{eqnarray}
Also, as in Step 3, we know
\begin{eqnarray}
\\
\nm{\pp[u_{\l}]}_{L^2([0,t]\times\Gamma^+)}^2&\leq&C\bigg(\e\nm{\h}_{L^2(\Omega\times\s^1)}^2+\nm{u_{\l}}_{L^2([0,t]\times\Omega\times\s^1)}^2
+\nm{(\e\dt+\vw\cdot\nx)(u_{\l}^2)}_{L^1([0,t]\times\Omega\times\s^1)}\bigg)\no\\
&\leq&C\bigg(\frac{1}{\e}\iint_{[0,t)\times\Omega\times\s^1}\ss u_{\l}+\e\nm{\h}_{L^2(\Omega\times\s^1)}^2+\frac{1}{\e}\nm{\g}_{L^2([0,t)\times\Gamma^-)}^2\no\\
&&+\nm{u_{\l}}_{L^2([0,t)\times\Omega\times\s^1)}^2+\frac{1}{\e}\nm{u_{\l}-\bar u_{\l}}_{L^2([0,t)\times\Omega\times\s^1)}^2\bigg)\no
\end{eqnarray}
Note that here we keep $\nm{u_{\l}}_{L^2([0,t)\times\Omega\times\s^1)}^2$ on the right-hand side.
Then we have
\begin{eqnarray}\label{well-posedness temp 5}
\\
&&\e^2\nm{u_{\l}(t)}_{L^2(\Omega\times\s^1)}^2+\e\nm{(1-\pp)[u_{\l}]}^2_{L^2([0,t)\times\Gamma^+)}
+\l\nm{u_{\l}}_{L^2([0,t)\times\Omega\times\s^1)}^2+\nm{u_{\l}-\bar
u_{\l}}_{L^2([0,t)\times\Omega\times\s^1)}^2\no\\
&\leq&\iint_{[0,t)\times\Omega\times\s^1}\ss u_{\l}+\e^2\nm{\h}_{L^2(\Omega\times\s^1)}^2+\nm{\g}_{L^2([0,t)\times\Gamma^-)}^2
+\e^2\eta\nm{\bar u_{\l}}_{L^2([0,t)\times\Omega\times\s^1)}^2.\no
\end{eqnarray}
On the other hand, we can square on both sides of
(\ref{well-posedness temp 3}) to obtain
\begin{eqnarray}\label{well-posedness temp 6}
\\
\e^2\tm{\bar u_{\l}}{[0,t]\times\Omega\times\s^1}^2
&\leq& C \bigg( \tm{u_{\l}-\bar
u_{\l}}{[0,t]\times\Omega\times\s^1}^2+\tm{\ss}{[0,t]\times\Omega\times\s^1}^2+\e^2\tm{(1-\pp)[u_{\l}]}{[0,t]\times\Gamma^{+}}^2\no\\
&&+\e^2\tm{\g}{[0,t]\times\Gamma^-}^2+\e^3\tm{u_{\l}(t)}{\Omega\times\s^1}^2+\e^3\tm{\h}{\Omega\times\s^1}^2\bigg).\nonumber
\end{eqnarray}
Taking $\eta$ sufficiently small, multiplying a sufficiently small constant on both sides of
(\ref{well-posedness temp 6}) and adding it to (\ref{well-posedness temp 5}) to absorb $\e^2\nm{\bar u_{\l}}_{L^2([0,t)\times\Omega\times\s^1)}^2$, $\e^2\nm{(1-\pp)[u_{\l}]}_{L^2(\Gamma^+)}^2$,
$\e^3\tm{u_{\l}(t)}{\Omega\times\s^1}^2$ and $\nm{u_{\l}-\bar
u_{\l}}_{L^2([0,t]\times\Omega\times\s^1)}^2$, we deduce
\begin{eqnarray}
\\
&&\e^2\nm{u_{\l}(t)}_{L^2(\Omega\times\s^1)}^2+\e\nm{(1-\pp)[u_{\l}]}_{L^2([0,t]\times\Gamma^+)}^2+\e^2\nm{\bar
u_{\l}}_{L^2([0,t]\times\Omega\times\s^1)}^2+\nm{u_{\l}-\bar
u_{\l}}_{L^2([0,t]\times\Omega\times\s^1)}^2\no\\&\leq&
C \bigg(\tm{\ss}{[0,t]\times\Omega\times\s^1}^2+
\int_0^t\iint_{\Omega\times\s^1}\ss u_{\l}+\e^2\nm{\h}_{L^2(\Omega\times\s^1)}^2+\nm{\g}_{L^2([0,t]\times\Gamma^-)}^2\bigg).\nonumber
\end{eqnarray}
Hence, we have
\begin{eqnarray}\label{well-posedness temp 7}
&&\e^2\nm{u_{\l}(t)}_{L^2(\Omega\times\s^1)}^2+\e\nm{(1-\pp)[u_{\l}]}_{L^2([0,t]\times\Gamma^+)}^2+\e^2\nm{u_{\l}}_{L^2([0,t]\times\Omega\times\s^1)}^2\\
&\leq& C \bigg(\tm{\ss}{[0,t]\times\Omega\times\s^1}^2+
\int_0^t\iint_{\Omega\times\s^1}\ss u_{\l}+\e^2\nm{\h}_{L^2(\Omega\times\s^1)}^2+\nm{\g}_{L^2([0,t]\times\Gamma^-)}^2\bigg).\no
\end{eqnarray}
A simple application of Cauchy's inequality leads to
\begin{eqnarray}
\int_0^t\iint_{\Omega\times\s^1}fu_{\l}\leq\frac{1}{4C\e^2}\tm{f}{[0,t]\times\Omega\times\s^1}^2+C\e^2\tm{u_{\l}}{[0,t]\times\Omega\times\s^1}^2.
\end{eqnarray}
Taking $C$ sufficiently small, we can divide (\ref{well-posedness
temp 7}) by $\e^2$ to obtain
\begin{eqnarray}\label{well-posedness temp 21}
&&\nm{u_{\l}(t)}_{L^2(\Omega\times\s^1)}^2+\frac{1}{\e}\nm{(1-\pp)[u_{\l}]}_{L^2([0,t]\times\Gamma^+)}^2+\nm{u_{\l}}_{L^2([0,t]\times\Omega\times\s^1)}^2\\
&\leq& C \bigg(
\frac{1}{\e^4}\nm{\ss}_{L^2([0,t]\times\Omega\times\s^1)}^2+\nm{\h}_{L^2(\Omega\times\s^1)}^2+\frac{1}{\e^2}\nm{\g}_{L^2([0,t]\times\Gamma^-)}^2\bigg).\no
\end{eqnarray}
Since above estimate does not depend on $\l$, it gives a uniform
estimate for the penalized neutron transport equation
(\ref{penalty neutron.}). Thus, we can extract a
weakly convergent subsequence $u_{\l}\rt u$ as $\l\rt0$. The weak
lower semi-continuity of $L^2$ norms implies that $u$ also satisfies
the estimate (\ref{well-posedness temp 21}). Hence, in the weak
formulation (\ref{well-posedness temp 4}), we can take $\l\rt0$ to
deduce that $u$ satisfies equation (\ref{neutron}). Also $u_{\l}-u$
satisfies the equation
\begin{eqnarray}
\left\{
\begin{array}{l}
\e^2\dt(u_{\l}-u)+\e\vec w\cdot\nabla_x(u_{\l}-u)+(u_{\l}-u)-(\bar
u_{\l}-\bar u)=-\l u_{\l}\ \ \text{for}\ \
(t,\vx,\vw)\in[0,\infty)\times\Omega\times\s^1\label{remainder},\\\rule{0ex}{1.0em}
(u_{\l}-u)(0,\vx,\vw)=0\ \ \text{for}\ \ (\vx,\vw)\in\Omega\times\s^1,\\\rule{0ex}{1.0em}
(u_{\l}-u)(\vec x_0,\vec w)=0\ \ \text{for}\ \ t\in[0,\infty),\ \ \vx_0\in\p\Omega\ \ \text{and}\ \ \vw\cdot\vn<0.
\end{array}
\right.
\end{eqnarray}
By a similar argument as above, we can achieve
\begin{eqnarray}
\nm{u_{\l}-u}_{L^2([0,t]\times\Omega\times\s^1)}^2\leq
C \bigg(\frac{\l}{\e^4}\nm{u_{\l}}_{L^2([0,t]\times\Omega\times\s^1)}^2\bigg).
\end{eqnarray}
When $\l\rt0$, the right-hand side approaches zero, which implies
the convergence is actually in the strong sense. The uniqueness
easily follows from the energy estimates.

\end{proof}

\subsection{$L^{\infty}$ Estimate - First Round}

In this section, we will prove the $L^{\infty}$ well-posedness.
\begin{definition}(Stochastic Cycle)
For fixed point $(t,\vx,\vw)$ with $(\vx,\vw)\notin\Gamma^0$, let
$(t_0,\vx_0,\vw_0)=(0,\vx,\vw)$. For $\vw_{k+1}$ such that
$\vw_{k+1}\cdot\vn(\vx_{k+1})>0$, define the $(k+1)$-component of
the back-time cycle as
\begin{eqnarray}
(t_{k+1},\vx_{k+1},\vw_{k+1})=(t_k+t_b(\vx_k,\vw_k),\vx_b(\vx_k,\vw_k),\vw_{k+1})
\end{eqnarray}
where
\begin{eqnarray}
t_b(\vx,\vw)&=&\inf\{t>0:\vx-\e t\vw\notin\Omega\}\\
x_b(\vx,\vw)&=&\vx-\e t_b(\vx,\vw)\vw\notin\Omega
\end{eqnarray}
Set
\begin{eqnarray}
\xc(s;t,\vx,\vw)&=&\sum_{k}\id_{\{t_{k+1}\leq s<t_k\}}\bigg(\vx_k-\e(t_k-s)\vw_k\bigg)\\
\wc(s;t,\vx,\vw)&=&\sum_{k}\id_{\{t_{k+1}\leq s<t_k\}}\vw_k
\end{eqnarray}
Define $\mu_{k+1}=\{\vw\in \s^1:\vw\cdot\vn(\vx_{k+1})>0\}$, and
let the iterated integral for $k\geq2$ be defined as
\begin{eqnarray}
\int_{\prod_{k=1}^{k-1}\mu_j}\prod_{j=1}^{k-1}\ud{\sigma_j}=\int_{\mu_1}\ldots\bigg(\int_{\mu_{k-1}}\ud{\sigma_{k-1}}\bigg)\ldots\ud{\sigma_1}
\end{eqnarray}
where $\ud{\sigma_j}=\Big(\vn(\vx_j)\cdot\vw\Big)\ud{\vw}$ is a
probability measure.
\end{definition}
\begin{lemma}\label{well-posedness lemma 4}
For $T_0>0$ sufficiently large, there exists constants $C_1,C_2>0$
independent of $T_0$, such that for $k=C_1T_0^{\frac{5}{4}}$ and $t\in[0,\e^{-1} T_0]$,
\begin{eqnarray}
\int_{\prod_{j=1}^{k-1}\mu_j}{\bf{1}}_{\{t_k(t,\vx,\vw,\vw_1,\ldots,\vw_{k-1})>0\}}\prod_{j=1}^{k-1}\ud{\sigma_j}\leq
\bigg(\frac{1}{2}\bigg)^{C_2T_0^{\frac{5}{4}}}
\end{eqnarray}
\end{lemma}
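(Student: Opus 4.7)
The plan is to exploit the fact that under the cosine-weighted measures $\ud \sigma_j$, a typical backward bounce consumes a non-negligible amount of physical time, so that after $k \sim T_0^{5/4}$ bounces the total accumulated flight time $\sum_{j=0}^{k-1} t_b(\vx_j,\vw_j)$ exceeds the window length $t \leq \e^{-1} T_0$ with overwhelming probability. The event $\{t_k(t,\vx,\vw,\vw_1,\ldots,\vw_{k-1}) > 0\}$ amounts to the back-time trajectory not yet having reached the initial time $0$ after $k$ bounces, i.e.\ $\sum_{j=0}^{k-1} t_b(\vx_j,\vw_j) < t$.

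The first step is a uniform single-bounce grazing estimate of the form
\[
\int_{\mu_j}\id_{\{\e\, t_b(\vx_j,\vw_j) < \delta\}}\, \ud \sigma_j \leq C\delta,
\]
valid uniformly in $\vx_j \in \p\Omega$ for all sufficiently small $\delta>0$. In our smooth convex planar setting $\e\, t_b$ is the chord length from $\vx_j$ in direction $-\vw_j$, so $\{\e\, t_b < \delta\}$ picks out nearly tangent $\vw_j$. The $C^3$ regularity and strict convexity ($R_{\min}>0$) imply, near grazing angles, a lower bound $\e\, t_b \gs R_{\min}(\vn\cdot\vw_j)$, and the cosine weight $(\vn\cdot\vw_j)\,\ud\vw_j$ makes the set of grazing directions have $\sigma$-measure $O(\delta)$ uniformly in the base point $\vx_j$.

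Next I choose the threshold $\delta = c\,T_0^{-1/4}$ with $c$ small but fixed, and declare the $j$-th bounce \emph{large} if $\e\, t_b(\vx_j,\vw_j) \geq \delta$ and \emph{small} otherwise. If at least $k/2$ of the $k$ bounces are large, then
\[
\sum_{j=0}^{k-1} t_b(\vx_j,\vw_j) \geq \frac{k}{2}\cdot\frac{\delta}{\e} \;=\; \frac{C_1 c}{2}\cdot\frac{T_0}{\e} \;\geq\; \frac{T_0}{\e} \;\geq\; t,
\]
provided $C_1 c \geq 2$, forcing $t_k \leq 0$. Consequently $\{t_k > 0\}$ is contained in the event that at least $k/2$ bounces are small. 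A union bound over the $\binom{k}{k/2} \leq 2^k$ possible choices of which bounces are small, combined with the single-bounce estimate applied via Fubini (integrating the $\vw_j$ sequentially from the innermost outward, using uniformity in $\vx_j$), gives
\[
\int_{\prod_{j=1}^{k-1}\mu_j}\id_{\{t_k>0\}}\prod_{j=1}^{k-1}\ud\sigma_j \;\leq\; 2^k (C\delta)^{k/2} \;=\; \bigl(4C^2 c\,T_0^{-1/4}\bigr)^{k/2}.
\]
For $T_0$ sufficiently large the base is at most $1/4$, which yields the claimed bound $(1/2)^{C_2 T_0^{5/4}}$ with, say, $C_2 = C_1/4$.

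The main obstacle is the first step: proving the grazing estimate uniformly in $\vx_j \in \p\Omega$. This is a geometric statement about chord-length distributions in a smooth convex planar domain, and the argument proceeds by a local change of variables near tangent directions in which the cosine weight conveniently produces the small factor $\delta$; the uniformity is inherited from $R_{\min} > 0$. Once this estimate is in hand the rest is a standard counting argument, and the exponent $5/4$ is simply the smallest power of $T_0$ for which the single-bounce probability $p \sim T_0^{-1/4}$ compounds over $k = C_1 T_0^{5/4}$ bounces to give the stated super-polynomial decay.
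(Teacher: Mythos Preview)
Your proposal is correct and follows the same route as the proofs in \cite{Esposito.Guo.Kim.Marra2013} and \cite{Esposito.Guo.Kim.Marra2015} that the paper cites (the paper itself gives no independent argument). The three ingredients you isolate---the uniform single-bounce grazing estimate, the pigeonhole step forcing at least $k/2$ long bounces to exceed the time window, and the union bound over choices of short bounces---are exactly the skeleton of the EGKM argument specialized to the present $2$D, unit-speed setting. Two minor remarks: in $2$D the cosine weight actually gives $\sigma_j(\{\e\,t_b<\delta\})\lesssim \delta^2$ rather than $\delta$, so your estimate is conservative; and the very first bounce $(\vx_0,\vw_0)=(\vx,\vw)$ is not integrated, so you should apply the grazing bound only to the $k-1$ integrated bounces, which changes nothing after adjusting constants.
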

\begin{proof}
See \cite[Lemma 4.1]{Esposito.Guo.Kim.Marra2013} and \cite[Lemma 3.13]{Esposito.Guo.Kim.Marra2015}.
\end{proof}
\begin{theorem}\label{LI estimate.}
Assume $\ss(t,\vx,\vw)\in
L^{\infty}([0,\infty)\times\Omega\times\s^1)$, $\h(\vx,\vw)\in
L^{\infty}(\Omega\times\s^1)$ and $\g(t,x_0,\vw)\in
L^{\infty}([0,\infty)\times\Gamma^-)$. Then the solution $u(t,\vx,\vw)$ to the neutron transport
equation (\ref{neutron}) satisfies
\begin{eqnarray}
\im{u}{[0,\infty)\times\Omega\times\s^1}&\leq& C \bigg(\frac{1}{\e^3}\nm{\ss}_{L^2([0,\infty)\times\Omega\times\s^1)}
+\im{\ss}{[0,\infty)\times\Omega\times\s^1}\\
&&+\frac{1}{\e}\nm{\h}_{L^2(\Omega\times\s^1)}+\im{\h}{[0,\infty)\times\Omega\times\s^1}\bigg)\no\\
&&+\frac{1}{\e^2}\nm{\g}_{L^2([0,\infty)\times\Gamma^-)}+\im{g}{[0,\infty)\times\Gamma^-}.\no
\end{eqnarray}
\end{theorem}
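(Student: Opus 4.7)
The plan is to bootstrap the $L^2$ estimate of Theorem \ref{LT estimate} up to an $L^{\infty}$ estimate via the mild formulation iterated along the stochastic back-time cycle, using a 2D polar change of variables to convert velocity-time integrals into spatial integrals. The diffusive boundary condition $u-\pp[u]=\g$ couples $u|_{\Gamma^-}$ to a half-space average of $u|_{\Gamma^+}$, so Duhamel's formula must be iterated through successive reflections to express $u$ directly in terms of $\h$, $\g$, $\ss$, and $\bar u$.

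First, along the back-time characteristic through $(t,\vx,\vw)$, Duhamel's formula decomposes $u$ into (i) a contribution from the initial data $\h$ when the characteristic reaches $t=0$ before exiting; (ii) a contribution from $\pp[u]+\g$ at the first boundary-hit time; and (iii) the integral $\int_0^{t_b}(\bar u+\ss)\ue^{-(t_b-s)}\,\ud{s}$ along the characteristic. Iterating through $k=C_1 T_0^{5/4}$ reflections converts each $\pp[u]$ into a velocity-averaged integral that is re-expanded by Duhamel. Lemma \ref{well-posedness lemma 4} ensures the residual ``un-exited'' contribution is bounded by $(1/2)^{C_2 T_0^{5/4}}\lnm{u}$, which is absorbed into the left-hand side for $T_0$ large. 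Data contributions accumulated along the cycles are controlled directly by $\im{\h}{\Omega\times\s^1}$, $\im{\g}{[0,\infty)\times\Gamma^-}$, and $\im{\ss}{[0,\infty)\times\Omega\times\s^1}$.

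The critical step is to control the $\bar u$ integrals at every level of the iteration. Squaring the mild formula and applying Cauchy-Schwarz in $s$ with weight $\ue^{-(t_b-s)}$ gives $|u(t,\vx,\vw)|^2\ls\text{(data)}^2+\int_0^{t_b}|\bar u(\tau(s),\xc(s))|^2\ue^{-(t_b-s)}\,\ud{s}$. Averaging over $\vw\in\s^1$ to estimate $|\bar u(t,\vx)|^2$, and then changing variables $(\vw,s)\mapsto \vx'=\vx-\e(t_b-s)\vw$ in 2D polar coordinates (Jacobian $\ud{\vw}\,\ud{s}=\ud{\vx'}/(\e|\vx-\vx'|)$) yields
\[
|\bar u(t,\vx)|^2\ls\text{(data)}^2+\frac{1}{\e}\int_\Omega\frac{\ue^{-|\vx-\vx'|/\e}}{|\vx-\vx'|}|\bar u(\tau(\vx'),\vx')|^2\,\ud{\vx'},
\]
with $\tau(\vx')=t-\e|\vx-\vx'|$. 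Splitting the integral at $|\vx-\vx'|=\d\e$, the near-field piece is bounded by $C\d\lnm{\bar u}^2$ (the polar measure $r\,\ud{r}\,\ud{\theta}$ absorbs the $1/r$ singularity and contributes $\d\e$, which cancels the prefactor $1/\e$), while the far-field piece is bounded by $\frac{C}{\d\e^2}\sup_t\tm{\bar u(t)}{\Omega}^2$. Choosing $\d$ small and absorbing the near-field term gives $\lnm{\bar u}\ls\text{(data)}+\frac{1}{\e}\sup_t\tm{\bar u(t)}{\Omega}$. Theorem \ref{LT estimate} then supplies $\sup_t\tm{u(t)}{\Omega\times\s^1}\ls\frac{1}{\e^2}\tnm{\ss}+\tnm{\h}+\frac{1}{\e}\tnm{\g}$, and combining with the trivial estimate $\lnm{u}\ls\text{(data)}+\lnm{\bar u}$ read off from the mild formula produces the claimed exponents $\e^{-3},\e^{-1},\e^{-2}$ on the $L^2$ norms of $\ss,\h,\g$.

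The main obstacle is the borderline non-integrability of the Jacobian $1/|\vx-\vx'|$ in two dimensions: a direct Cauchy-Schwarz against $L^2$ would fail because $\int_\Omega\ud{\vx'}/|\vx-\vx'|^2$ diverges logarithmically. The splitting at scale $\e$ is therefore essential, and the delicate trade-off between the near-field $L^\infty$ absorption (scaling with $\d$) and the far-field $L^2$ bound (scaling with $\d^{-1}\e^{-2}$) is exactly what produces the crucial factor $\e^{-1}$ connecting $\lnm{\bar u}$ to $\sup_t\tm{\bar u(t)}{\Omega}$. The only extra care needed is the bookkeeping of stochastic-cycle contributions, where one must verify that each of the at most $k=C_1 T_0^{5/4}$ reflections contributes the same type of $\bar u$ integral so that the above estimate applies uniformly along the cycle.
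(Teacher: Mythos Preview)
Your overall architecture is correct and matches the paper: iterate the mild formula through the stochastic cycle, use Lemma~\ref{well-posedness lemma 4} to absorb the residual, bound the data terms directly, and bootstrap the $\bar u$ integrals via an $L^2$ estimate. The gap is in the key step where you control the $\bar u$ integral by a single polar change of variables $(\vw,s)\mapsto\vx'$. After this change, the time argument becomes $\tau(\vx')=t-\e|\vx-\vx'|$, which \emph{varies with $\vx'$}. Consequently the far-field bound
\[
\int_{\Omega}\bigl|\bar u\bigl(\tau(\vx'),\vx'\bigr)\bigr|^{2}\,\ud{\vx'}\;\leq\;\sup_{t}\bigl\|\bar u(t)\bigr\|_{L^{2}(\Omega)}^{2}
\]
is false in general: the left-hand side is a ``slanted'' time--space integral, and the best pointwise bound $|\bar u(\tau(\vx'),\vx')|\leq\sup_{t}|\bar u(t,\vx')|$ only yields $\bigl\|\sup_{t}|\bar u(t,\cdot)|\bigr\|_{L^{2}(\Omega)}^{2}$, which is \emph{not} controlled by Theorem~\ref{LT estimate}. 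Your single change of variables consumes both the angular and the time degree of freedom, leaving no variable to decouple $\tau$ from $\vx'$.

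The paper repairs exactly this point by a \emph{double} Duhamel iteration: one expands $\bar u$ once more via the mild formula, producing a \emph{second} velocity integral $\vw_{s_1'}$ in addition to $\vw_{s_1}$. The change of variables is then taken from the two \emph{angles} $(\psi,\phi)\to y\in\Omega$, with Jacobian $\e^{2}(t_1'-s_1')(t_1''-s_1'')\,|\sin(\psi-\phi)|\geq\e^{2}\delta^{3}$ on the good set. Crucially, the time argument $\tau=t-\e^{2}\bigl[(t_1-s_1)+(t_1'-s_1')+(t_1''-s_1'')\bigr]$ depends only on the remaining $s$-variables, \emph{not on $y$}; hence for each fixed $(s_1,s_1',s_1'')$ one legitimately has $\int_{\Omega}|\bar u(\tau,y)|^{2}\,\ud{y}\leq\|\bar u(\tau)\|_{L^{2}(\Omega)}^{2}$, and the leftover exponentially-weighted time integrals contribute $O(1)$. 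This yields $\|u\|_{L^{\infty}}\leq C\e^{-1}\|\bar u\|_{L^{2}}+\text{data}$, after which Theorem~\ref{LT estimate} closes the argument. Your single-iteration polar approach would be adequate for the \emph{steady} problem (no time variable to entangle), but in the unsteady setting you must iterate once more so that the change of variables to space uses only velocity angles and leaves the time structure intact.
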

\begin{proof}
We divide the proof into several steps:\\
\ \\
Step 1: Mild formulation.\\
We rewrite the equation (\ref{neutron}) along the characteristics as
\begin{eqnarray}
u(t,\vx,\vw)
&=&{\bf 1}_{\{t\geq \e^2t_b\}}\bigg( \left(\g+\pp[u]\right)\Big(t-\e^2t_b,\vx-(\e\vw)
t_b,\vw\Big)\ue^{-t_b}\\
&&+\int_{0}^{t_b}(\bar u+\ss)\Big(t-\e^2(t_b-s),\vx-\e(t_b-s)\vw,\vw\Big)\ue^{-(t_b-s)}\ud{s}\bigg)\no\\
&&+{\bf 1}_{\{t\leq \e^2t_b\}}\bigg(\h\left(\vx-(\e\vw)\frac{
t}{\e^2},\vw\right)\ue^{-\frac{t}{\e^2}}\no\\
&&+\int_{0}^{\frac{t}{\e^2}}(\bar
u+\ss)\left(\e^2s,\vx-\e\left(\frac{t}{\e^2}-s\right)\vw,\vw\right)\ue^{-(\frac{t}{\e^2}-s)}\ud{s}\bigg)\no,
\end{eqnarray}
Note that here $\pp[u]$ is an integral over $\mu_1$ at $\vx_1$, we may rewrite it again along the characteristics to $\vx_2$. This process can continue to arbitrary $\vx_k$. Then we get
\begin{eqnarray}\label{wt 21}
u(t,\vx,\vw)&=&H\ue^{- t_1\wedge \frac{t}{\e^2}}+\sum_{l=1}^{k-1}\int_{\prod_{j=1}^l}G_l\ue^{- t_{l+1}\wedge(\frac{t}{\e^2}-\sum_{j=1}^{l}t_j)}\prod_{j=1}^l\ud{\sigma_j}\\
&&+{\bf 1}_{\{t\geq \e^2\sum_{j=1}^kt_j\}}\int_{\prod_{j=1}^{k-1}}\pp[u]\left(t-\e^2\sum_{l=1}^kt_l,\vx_k,\vw_{k-1}\right)\ue^{- \sum_{j=1}^kt_j}\prod_{j=1}^{k-1}\ud{\sigma_j}\no\\
&=&I+II+III.\no
\end{eqnarray}
where
\begin{eqnarray}
H&=&{\bf 1}_{\{t\geq \e^2t_1\}}\g\Big(t-\e^2t_1,\vx-(\e\vw)t_1,\vw\Big)+{\bf 1}_{\{t\leq \e^2t_1\}}\h\left(\vx-(\e\vw)\frac{t}{\e^2},\vw\right)\\
&&+\int_0^{t_1\wedge\frac{t}{\e^2}}\ss\Big(t-\e^2(t_1-s_1),\vx-\e(t_1-s_1)\vw,\vw\Big)\ue^{ s_1}\ud{s_1}\no\\
&&+\int_0^{t_1\wedge\frac{t}{\e^2}}\bar u\Big(t-\e^2(t_1-s_1),\vx-\e(t_1-s_1)\vw\Big)\ue^{ s_1}\ud{s_1},\no
\end{eqnarray}
and
\begin{eqnarray}
G_l&=&{\bf 1}_{\{t-\sum_{j=1}^l\e^2t_j\geq \e^2t_{l+1}\}}\g\left(t-\e^2\sum_{j=1}^lt_j,\vx_l-(\e\vw_l)t_{l+1},\vw_l\right)\\
&&+{\bf 1}_{\{t-\sum_{j=1}^l\e^2t_j\leq \e^2t_{l+1}\}}\h\left(\vx_l+\sum_{j=1}^lt_j\e\vw_{j}-(\e\vw_l)\frac{t}{\e^2},\vw_l\right)\no\\
&&+\int_0^{t_l\wedge(\frac{t}{\e^2}-\sum_{j=1}^{l}t_j)}\ss\Big(t-\sum_{j=1}^{l}\e^2t_j-\e^2(t_{l+1}-s_{l+1}),\vx_l-\e(t_{l+1}-s_{l+1})\vw_l,\vw_l\Big)\ue^{ s_{l+1}}\ud{s_{l+1}}\no\\
&&+\int_0^{t_l\wedge(\frac{t}{\e^2}-\sum_{j=1}^{l}t_j)}\bar u\Big(t-\sum_{j=1}^{l}\e^2t_j-\e^2(t_{l+1}-s_{l+1}),\vx_l-\e(t_{l+1}-s_{l+1})\vw_l\Big)\ue^{ s_{l+1}}\ud{s_{l+1}}.\no
\end{eqnarray}
We need to estimate each term on the right-hand side of (\ref{wt 21}).\\
\ \\
Step 2: Estimates in mild formulation.\\
We first consider $III$. We may decompose it as
\begin{eqnarray}
III&\leq&\int_{\prod_{j=1}^{k-1}}\pp[u]\left(t-\e^2\sum_{l=1}^kt_l,\vx_k,\vw_{k-1}\right)\ue^{- \sum_{j=1}^kt_j}\prod_{j=1}^{k-1}\ud{\sigma_j}\\
&=&\int_{\prod_{j=1}^{k-1}}{\bf{1}}_{\{\e^2\sum_{j=1}^kt_j<\e T_0\}}\pp[u]\left(t-\e^2\sum_{l=1}^kt_l,\vx_k,\vw_{k-1}\right)\ue^{- \sum_{j=1}^kt_j}\prod_{j=1}^{k-1}\ud{\sigma_j}\no\\
&&+\int_{\prod_{j=1}^{k-1}}{\bf{1}}_{\{\e^2\sum_{j=1}^kt_j>\e T_0\}}\pp[u]\left(t-\e^2\sum_{l=1}^kt_l,\vx_k,\vw_{k-1}\right)\ue^{- \sum_{j=1}^kt_j}\prod_{j=1}^{k-1}\ud{\sigma_j}\no\\
&=&III_1+III_2,\no
\end{eqnarray}
where $T_0>0$ is defined as in Lemma \ref{well-posedness lemma 4}.
Then we take $k=C_1T_0^{\frac{5}{4}}$. By Lemma \ref{well-posedness lemma 4}, we deduce
\begin{eqnarray}
\abs{III_1}&\leq&C\bigg(\frac{1}{2}\bigg)^{C_2T_0^{\frac{5}{4}}}\im{u}{[0,\infty)\times\Omega\times\s^1}.
\end{eqnarray}
Also, $\ds\e^2\sum_{j=1}^kt_j>\e^{-1} T_0$ implies that $\ds\sum_{j=1}^kt_j\geq\dfrac{T_0}{\e^3}\geq T_0$ for $0<\e<<1$, so we may directly estimate
\begin{eqnarray}
\abs{III_2}&\leq&C\ue^{- T_0}\im{u}{[0,\infty)\times\Omega\times\s^1}.
\end{eqnarray}
Therefore, for $T_0$ sufficiently large, we know
\begin{eqnarray}\label{wt 22}
\abs{III}\leq \delta\im{u}{[0,t]\times\Omega\times\s^1},
\end{eqnarray}
for some $\d>0$ small. On the other hand, we may directly estimate the terms in $I$ and $II$ related to $\g$, $\h$ and $\ss$, which we denote as $I_1$ and $II_1$. For fixed $T$, it is easy to see
\begin{eqnarray}\label{wt 23}
\abs{I_1}+\abs{II_1}\leq \im{\ss}{[0,t]\times\Omega\times\s^1}+\im{\g}{[0,t]\times\Gamma^-}+\im{\h}{\Omega\times\s^1}.
\end{eqnarray}
Hence, the remaining terms are all related to $\bar u$.\\
\ \\
Step 3: Estimate of $\bar u$ term.\\
Collecting the results in (\ref{wt 22}) and (\ref{wt 23}), we obtain
\begin{eqnarray}
\abs{u}&\leq&A+\abs{\int_0^{t_1}\bar u\Big(t-\e^2(t_1-s_1),\vx-\e(t_1-s_1)\vw\Big)\ue^{- (t_1-s_1)}\ud{s_1}}\\
&&+\abs{\sum_{l=1}^{k-1}\int_{\prod_{j=1}^l}\bigg(\int_0^{t_l}\bar u\Big(t-\e^2\sum_{j=1}^lt_l-\e^2(t_{l+1}-s_{l+1}),\vx_l-\e(t_{l+1}-s_{l+1}\Big)\vw_l)\ue^{- (t_{l+1}-s_{l+1})}\ud{s_{l+1}}\bigg)\prod_{j=1}^l\ud{\sigma_j}},\no\\
&=&A+I_2+II_2,\no
\end{eqnarray}
where
\begin{eqnarray}
A=\im{\ss}{[0,t]\times\Omega\times\s^1}+\im{\g}{[0,t]\times\Gamma^-}+\im{\h}{\Omega\times\s^1}+\delta\im{u}{[0,t]\times\Omega\times\s^1}.
\end{eqnarray}
By definition, we know
\begin{eqnarray}
\abs{I_2}&=&\abs{\int_0^{t_1}\bigg(\int_{\s^1}u(t-\e^2(t_1-s_1),\vx-\e(t_1-s_1)\vw,\vw_{s_1})\ud{\vw_{s_1}}\bigg)\ue^{- (t_1-s_1)}\ud{s_1}},
\end{eqnarray}
where $\vw_{s_1}\in\s^1$ is a dummy variable.
Then we can utilize the mild formulation (\ref{wt 21}) to rewrite $u(\vx-\e(t_1-s_1)\vw,\vw_{s_1})$ along the characteristics. We denote the stochastic cycle as $(t_k',\vx_k',\vw_k')$ correspondingly and $(t_0',\vx_0',\vw_0')=\Big(t-\e^2(t_1-s_1),\vx-\e(t_1-s_1)\vw,\vw_{s_1}\Big)$. Then
\begin{eqnarray}
\abs{I_2}&\leq& \abs{\int_0^{t_1}\bigg(\int_{\s^1}A\ud{\vw_{s_1}}\bigg)\ue^{- (t_1-s_1)}\ud{s_1}}\\
&&+\abs{\int_0^{t_1}\bigg(\int_{\s^1}\int_0^{t_1'}\bar u(t_0'-\e^2(t_1'-s_1'),\vx'_0-\e(t_1'-s_1')\vw_{s_1})\ue^{- (t_1'-s_1')}\ud{s_1'}\ud{\vw_{s_1}}\bigg)\ue^{- (t_1-s_1)}\ud{s_1}}\no\\
&&+\Bigg\vert\int_0^{t_1}\bigg(\int_{\s^1}\sum_{l'=1}^{k-1}\int_{\prod_{j'=1}^{l'}}\bigg(\int_0^{t_{l'}'}\bar u(t_0'-\e^2\sum_{j'=1}^{l'}t_{j'}'-\e^2(t_{l'+1}'-s_{l'+1}'),\vx_{l'}-\e(t_{l'+1}'-s_{l'+1}')\vw_{l'})\no\\
&&\ue^{- (t_{l'+1}'-s_{l'+1}')}\ud{s_{l'+1}'}\bigg)\prod_{j'=1}^{l'}
\ud{\sigma_{j'}}\ud{\vw_{s_1}}\bigg)\no\\
&&\ue^{- (t_1-s_1)}\ud{s_1}\Bigg\vert,\no\\
&=&\abs{I_{2,1}}+\abs{I_{2,2}}+\abs{I_{2,3}}.\no
\end{eqnarray}
It is obvious that
\begin{eqnarray}
\abs{I_{2,1}}&=&\abs{\int_0^{t_1}\bigg(\int_{\s^1}A\ud{\vw_{s_1}}\bigg)\ue^{- (t_1-s_1)}\ud{s_1}}\leq A\\
&\leq& \im{\ss}{[0,t]\times\Omega\times\s^1}+\im{\g}{[0,t]\times\Gamma^-}+\im{\h}{\Omega\times\s^1}+\delta\im{u}{[0,t]\times\Omega\times\s^1}.\no
\end{eqnarray}
For $I_{2,2}$, we use mild formulation again to rewrite $\bar u(t-\e^2(t_1-s_1),\vx'-\e(t_1'-s_1')\vw_{s_1})$. Denote the stochastic cycle as $(t_k'',\vx_k'',\vw_k'')$. For convenience, we only write out the key term for estimating as
\begin{eqnarray}
I_{2,2}&=&\int_0^{t_1}\int_{\s^1}\int_0^{t_1'}\int_{\s^1}\int_0^{t_1''}\bar u(t_0'-\e^2(t_1'-s_1'),\vx'_0-\e(t_1'-s_1')\vw_{s_1}-\e(t_1''-s_1'')\vw_{s_1'})\\
&&\ue^{- (t_1''-s_1'')}\ue^{- (t_1'-s_1')}\ue^{- (t_1-s_1)}\ud{s_1}\ud{s_1''}\ud{\vw_{s_1'}}\ud{s_1'}\ud{\vw_{s_1}}.\no
\end{eqnarray}
Then we decompose
\begin{eqnarray}
I_{2,2}&=&\int_0^{t_1}\int_{\s^1}\int_{t_1'-\d}^{t_1'}\int_{\s^1}\int_0^{t_1''}+\int_0^{t_1}\int_{\s^1}\int_0^{t_1'-\d}\int_{\s^1}\int_{t_1''-\d}^{t_1''}\\
&&+\int_0^{t_1}\int_{\s^1}\int_0^{t_1'-\d}\int_{\s^1}\int_0^{t_1''-d}{\bf{1}}_{\vw_{s_1}\cdot\vw_{s_1'}\leq\d}
+\int_0^{t_1}\int_{\s^1}\int_0^{t_1'-\d}\int_{\s^1}\int_0^{t_1''-\d}{\bf{1}}_{\vw_{s_1}\cdot\vw_{s_1'}\geq\d}\no\\
&=&I_{2,2,1}+I_{2,2,2}+I_{2,2,3}+I_{2,2,4}.\no
\end{eqnarray}
The first three terms are only restricted to small domains, so we can directly obtain
\begin{eqnarray}
\abs{I_{2,2,1}}+\abs{I_{2,2,2}}+\abs{I_{2,2,3}}\leq \delta\im{u}{[0,t]\times\Omega\times\s^1}.
\end{eqnarray}
We turn to the last and most difficult term $I_{2,2,4}$.
Note $\vw_{s_1},\vw_{s_1'}\in\s^1$, which are essentially one-dimensional
variables. Thus, we may write them in new variablea $\psi$ and $\phi$ as
$\vw_{s_1}=(\cos\psi,\sin\psi)$ and $\vw_{s_1'}=(\cos\phi,\sin\phi)$. Then we define the change of variable
$[-\pi,\pi)^2\rt \Omega: (\psi,\phi)\rt(y_1,y_2)=(\vx'-\e(t_1'-s_1')\vw_{s_1}-\e(t_1''-s_1'')\vw_{s_1'})$, i.e.
\begin{eqnarray}
\left\{
\begin{array}{rcl}
y_1&=&x_1-\e(t_1-s_1)w_1-\e(t_1'-s_1')\cos\psi-\e(t_1''-s_1'')\cos\phi,\\
y_2&=&x_2-\e(t_1-s_1)w_2-\e(t_1'-s_1')\sin\psi-\e(t_1''-s_1'')\sin\phi.
\end{array}
\right.
\end{eqnarray}
Therefore, we can directly compute the
Jacobian
\begin{eqnarray}
\\
\abs{\frac{\p{(y_1,y_2)}}{\p{(\psi,\phi)}}}=\abs{\abs{\begin{array}{cc}
\e(t_1'-s_1')\sin\psi&\e(t_1''-s_1'')\sin\phi\\
-\e(t_1'-s_1')\cos\psi&-\e(t_1''-s_1'')\sin\phi
\end{array}}}=\e^2(t_1'-s_1')(t_1''-s_1'')\Big(\vw_{s_1}\cdot\vw_{s_1'}\Big)\geq\e^2\d^3.\no
\end{eqnarray}
Hence, using H\"{o}lder's inequality, we have
\begin{eqnarray}
I_{2,2,2}&\leq&C\bigg(\int_{0}^{t}\int_{\Omega}\frac{1}{\e^2\delta^3}\abs{\bar u^2(s,\vec y)}\ud{\vec y}\ud{s}\bigg)^{\frac{1}{2}}.
\end{eqnarray}
Therefore, we have shown
\begin{eqnarray}
\abs{I_{2,2}}\leq \delta\im{u}{[0,t]\times\Omega\times\s^1}+\frac{1}{\d^{\frac{3}{2}}\e}\nm{\bar u}_{L^2([0,t]\times\Omega\times\s^1)}.
\end{eqnarray}
After a similar but tedious computation, we can show
\begin{eqnarray}
\abs{I_{2,3}}\leq \delta\im{u}{[0,t]\times\Omega\times\s^1}+\frac{1}{\d^{\frac{3}{2}}\e}\nm{\bar u}_{L^2([0,t]\times\Omega\times\s^1)}.
\end{eqnarray}
Hence, we have proved
\begin{eqnarray}
\abs{I_{2}}&\leq& \delta\im{u}{[0,t]\times\Omega\times\s^1}+\frac{1}{\d^{\frac{3}{2}}\e}\nm{\bar u}_{L^2([0,t]\times\Omega\times\s^1)}\\
&&+\im{\ss}{[0,t]\times\Omega\times\s^1}+\im{\g}{[0,t]\times\Gamma^-}+\im{h}{\Omega\times\s^1}.\no
\end{eqnarray}
In a similar fashion, we can show
\begin{eqnarray}
\abs{II_{2}}&\leq& \delta\im{u}{[0,t]\times\Omega\times\s^1}+\frac{1}{\d^{\frac{3}{2}}\e}\nm{\bar u}_{L^2([0,t]\times\Omega\times\s^1)}\\
&&+\im{\ss}{[0,t]\times\Omega\times\s^1}+\im{\g}{[0,t]\times\Gamma^-}+\im{h}{\Omega\times\s^1}.\no
\end{eqnarray}
\ \\
Step 4: Synthesis.\\
Summarizing all above, we have shown
\begin{eqnarray}
\abs{u}&\leq& \delta\im{u}{[0,t]\times\Omega\times\s^1}+\frac{1}{\d^{\frac{3}{2}}\e}\nm{\bar u}_{L^2([0,t]\times\Omega\times\s^1)}\\
&&+\im{\ss}{[0,t]\times\Omega\times\s^1}+\im{\g}{[0,t]\times\Gamma^-}+\im{\h}{\Omega\times\s^1}.\no
\end{eqnarray}
Since $(t,\vx,\vw)$ are arbitrary and $\d$ is small, we have
\begin{eqnarray}
\\
\im{u}{[0,t]\times\Omega\times\s^1}&\leq& C\bigg(\frac{1}{\e}\nm{\bar u}_{L^2([0,t]\times\Omega\times\s^1)}+\im{\ss}{[0,t]\times\Omega\times\s^1}+\im{\g}{[0,t]\times\Gamma^-}+\im{\h}{\Omega\times\s^1}\bigg).\no
\end{eqnarray}
Then using Theorem \ref{LT estimate}, we get the desired result.
\end{proof}

\subsection{$L^{2m}$ Estimate}

In this section, we try to improve previous estimates. In the following, we assume $m\geq2$ is an integer and let $o(1)$ denote a sufficiently small constant.
\begin{lemma}\label{LN estimate}
Assume $\ss(t,\vx,\vw)\in
L^{\infty}([0,\infty)\times\Omega\times\s^1)$, $\h(\vx,\vw)\in
L^{\infty}(\Omega\times\s^1)$ and $\g(t,x_0,\vw)\in
L^{\infty}([0,\infty)\times\Gamma^-)$. Then the solution $u(t,\vx,\vw)$ to the neutron transport
equation (\ref{neutron}) satisfies
\begin{eqnarray}
\\
&&\nm{u(t)}_{L^2(\Omega\times\s^1)}+\frac{1}{\e^{\frac{1}{2}}}\nm{(1-\pp)[u]}_{L^2([0,\infty)\times\Gamma^+)}+\nm{\bar
u}_{L^2([0,\infty)\times\Omega\times\s^1)}+\frac{1}{\e}\nm{u-\bar
u}_{L^2([0,\infty)\times\Omega\times\s^1)}\no\\
&\leq&
C \bigg(o(1)\e^{\frac{1}{m}}\nm{u}_{L^{\infty}([0,\infty)\times\Omega\times\s^1)}\no\\
&&+\frac{1}{\e^{\frac{3}{2}-\frac{5m-2}{2m(2m-1)}}}\tm{\ss}{[0,\infty)\times\Omega\times\s^1}
+\frac{1}{\e^{\frac{5}{2}-\frac{5m-2}{2m(2m-1)}}}\nm{\ss}_{L^{\frac{2m}{2m-1}}([0,\infty)\times\Omega\times\s^1)}\no\\
&&+\frac{1}{\e^{\frac{1}{2}-\frac{5m-2}{2m(2m-1)}}}\nm{\h}_{L^2(\Omega\times\s^1)}+\e^{\frac{1}{2m-1}}\nm{h}_{L^{2m}(\Omega\times\s^1)}\no\\
&&+\frac{1}{\e^{\frac{3}{2}-\frac{5m-2}{2m(2m-1)}}}\nm{\g}_{L^2([0,\infty)\times\Gamma^-)}+\nm{g}_{L^m([0,\infty)\times\Gamma^+)}\bigg).\nonumber
\end{eqnarray}
\end{lemma}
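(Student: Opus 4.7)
The plan is to refine the $L^2$ argument of Theorem~\ref{LT estimate} by sharpening the source, initial, and boundary contributions through H\"older-type interpolation between $L^2$ and $L^\infty$, at the cost of carrying $\nm{u}_{L^\infty}$ with a small prefactor on the right-hand side. As in the proof of Theorem~\ref{LT estimate}, I would work with the penalized problem and pass to the limit $\l\to 0$, so that the only new content lies in producing a version of the kernel estimate that isolates $\nm{\bar u}_{L^2}$ with coefficients governed by the mixed norms appearing in the statement.

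First I would re-run Steps 1--4 of the proof of Theorem~\ref{LT estimate} verbatim to obtain the penalized solution together with the baseline energy inequality and the grazing-set control of $\pp[u]$ on $\Gamma^+$ via Lemma~\ref{well-posedness lemma 2}; this is what absorbs the non-coercive diffusive-boundary contribution. The key modification enters when bounding the source pairing $\iint \ss u$: instead of Cauchy--Schwarz with $\nm{\ss}_{L^2}\nm{u}_{L^2}$, I would use the H\"older duality $\iint \ss u\le \nm{\ss}_{L^{2m/(2m-1)}}\nm{u}_{L^{2m}}$ together with the interpolation $\nm{u}_{L^{2m}}\le \nm{u}_{L^2}^{1/m}\nm{u}_{L^\infty}^{1-1/m}$, followed by Young's inequality tuned so that the $\nm{u}_{L^2}$ piece is absorbed into the coercive term on the left while the $\nm{u}_{L^\infty}$ piece becomes the advertised $o(1)\e^{1/m}\nm{u}_{L^\infty}$ term. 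The same interpolation converts the $\nm{\h}_{L^{2m}}$ initial datum and the $\nm{g}_{L^m}$ boundary datum into the fractional powers of $\e$ claimed in the statement.

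Next I would redo the kernel estimate with the elliptic test function $\phi=-\vw\cdot\nx\zeta$, $\Delta_x\zeta=\bar u$, $\zeta|_{\p\Omega}=0$, exactly as in Step 5 of Theorem~\ref{LT estimate}, producing the coercive contribution $\frac{\e}{2}\nm{\bar u}_{L^2([0,t]\times\Omega\times\s^1)}^2$ from the symmetric $w_1^2\p_{11}+w_2^2\p_{22}$ pairing. Each of the remaining pairings against $\phi$ is then bounded by the same H\"older/interpolation scheme: the bulk pairings yield $\nm{u-\bar u}_{L^2}$ factors that are coercive on the left, while the boundary pairing is controlled through the trace estimate $\nm{\phi}_{L^2(\Gamma)}\le C\nm{\bar u}_{L^2}$ and the improved boundary bound of Step 3 of Theorem~\ref{LT estimate}.

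The main obstacle, as flagged in Step 5 of the introduction, will be the time-derivative pairing $-\e^2\iint \dt\phi\,u=-\e^2\iint (\vw\cdot\nx\dt\zeta)(u-\bar u)$, which forces me to estimate $\nm{\dt\nx\zeta}$. Following the locally time-independent test-function trick of the $L^2$ proof, I would take $\phi=-\Phi(\vx)$ with $\Delta_x\Phi=\dt\bar u$, $\Phi|_{\p\Omega}=0$, and use a difference quotient in $t$ to derive a pointwise-in-$t$ identity for $\e^2\nm{\dt\nx\zeta(t)}_{L^2}^2$ in terms of $\nm{(u-\bar u)(t)}_{L^2}$, $\nm{\ss(t)}$, and boundary data, and then integrate in $t$. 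The $L^{2m}$ version of this identity cannot be closed by $L^2$ norms alone, because the $\e^{-2}$ weight in front of $\dt$ degrades the scaling; I would interpolate the resulting bound between its natural $L^2$ form and the $L^\infty$ control inherited from Theorem~\ref{LI estimate.}, then apply Young's inequality to split the loss of $\e$ between an $o(1)\e^{1/m}\nm{u}_{L^\infty}$ contribution and the data norms, choosing the Young exponents so that the latter appear with the stated exponent $\frac{5m-2}{2m(2m-1)}$. This Young-exponent bookkeeping is the delicate computational heart of the proof and is precisely the mechanism that makes the unsteady estimate strictly weaker than its steady counterpart in \cite{AA007}. Passing to the limit $\l\to 0$ and invoking weak lower semi-continuity of the relevant norms then closes the estimate.
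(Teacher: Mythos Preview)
Your proposal misses the central new idea of the paper's proof: the test function in the kernel estimate is \emph{not} the $L^2$ one $\Delta_x\zeta=\bar u$ from Theorem~\ref{LT estimate}, but rather the $L^{2m}$ version
\[
\Delta_x\zeta(t)=\big(\bar u(t,\vx)\big)^{2m-1},\qquad \zeta|_{\p\Omega}=0,
\]
with the corresponding elliptic estimate $\nm{\zeta}_{W^{2,\frac{2m}{2m-1}}}\le C\nm{\bar u}_{L^{2m}}^{2m-1}$ and the Sobolev embedding $W^{2,\frac{2m}{2m-1}}\hookrightarrow H^1$. Plugging $\phi=-\vw\cdot\nx\zeta$ into the weak formulation then makes the coercive cross term equal to $\e\pi\nm{\bar u}_{L^{2m}([0,t]\times\Omega)}^{2m}$ directly, and the time-derivative step uses $\Delta_x\Phi=\dt\bar u\cdot(\bar u)^{2m-2}$. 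This puts $\e^2\nm{\bar u}_{L^{2m}}^2$ on the left-hand side of the combined energy--kernel inequality, which is exactly what is needed to absorb the pairing $\iint\ss\bar u\le \nm{\ss}_{L^{\frac{2m}{2m-1}}}\nm{\bar u}_{L^{2m}}$ via Cauchy's inequality \emph{without} paying an $\nm{u}_{L^\infty}$ interpolation cost on the kernel part. The interpolation--Young machinery you describe is indeed used, but only on the \emph{non-kernel} quantities $\nm{u-\bar u}_{L^{2m}}$, $\nm{(1-\pp)[u]}_{L^m(\Gamma^+)}$, and the endpoint term $\nm{u(t)}_{L^{2m}}$ that arise from pairing with the $L^{2m}$ test function; it is the latter endpoint term that produces the critical exponent $\tfrac{5m-2}{2m(2m-1)}$.

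If you run your scheme with $\Delta_x\zeta=\bar u$, the coercive term is only $\e\nm{\bar u}_{L^2}^2$, and the only route to the $L^{\frac{2m}{2m-1}}$ source norm is to interpolate $\nm{\bar u}_{L^{2m}}\le \nm{\bar u}_{L^2}^{1/m}\nm{u}_{L^\infty}^{(m-1)/m}$. A quick Young-exponent computation (e.g.\ at $m=2$) shows the resulting power of $\e$ on $\nm{\ss}_{L^{\frac{2m}{2m-1}}}$ is strictly worse than the one in the statement, so the claimed estimate cannot be reached this way. The fix is to change the auxiliary elliptic problem to the nonlinear one above; once you do that, the rest of your outline (energy estimate, grazing control via Lemma~\ref{well-posedness lemma 2}, difference-quotient treatment of $\dt\phi$, interpolation on the non-kernel pieces) matches the paper.
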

\begin{proof}
We divide the proof into several steps:\\
\ \\
Step 1: Kernel Estimate.\\
Applying Lemma \ref{well-posedness lemma 3} to the
equation (\ref{neutron}). Then for any
$\phi\in L^{2}([0,t)\times\Omega\times\s^1)$ satisfying
$\e\dt\phi+\vw\cdot\nx\phi\in L^2([0,t)\times\Omega\times\s^1)$
and $\phi\in L^{2}([0,t)\times\Gamma)$, we have
\begin{eqnarray}\label{well-posedness temp 4.}
&&
-\e^2\int_0^t\iint_{\Omega\times\s^1}\dt\phi
u-\e\int_0^t\iint_{\Omega\times\s^1}(\vw\cdot\nx\phi)u+\int_0^t\iint_{\Omega\times\s^1}(u-\bar
u)\phi\\
&=&-\e\int_0^t\int_{\Gamma}u\phi\ud{\gamma}-\e^2\iint_{\Omega\times\s^1}u(t)\phi(t)+\e^2\iint_{\Omega\times\s^1}u(0)\phi(0)+\int_0^t\iint_{\Omega\times\s^1}\ss\phi.\no
\end{eqnarray}
Our goal is to choose a particular test function $\phi$. We first
construct an auxiliary function $\zeta(t)$. Since $u(t)\in
L^{\infty}(\Omega\times\s^1)$, it naturally implies $\bar
u(t)\in L^{\infty}(\Omega)$. We define $\zeta(t,\vx)$ on $\Omega$
satisfying
\begin{eqnarray}\label{test temp 1.}
\left\{
\begin{array}{rcl}
\Delta \zeta(t)&=&(\bar u)^{2m-1}(t,\vx)\ud{\vx}\ \ \text{in}\ \
\Omega,\\\rule{0ex}{1.0em} \zeta(t)&=&0\ \ \text{on}\ \ \p\Omega.
\end{array}
\right.
\end{eqnarray}
In the bounded domain $\Omega$, based on the standard elliptic
estimate, we have
\begin{eqnarray}\label{test temp 3.}
\nm{\zeta}_{W^{2,\frac{2m}{2m-1}}(\Omega)}\leq C\nm{(\bar
u)^{2m-1}}_{L^{\frac{2m}{2m-1}}(\Omega)}= C\nm{\bar
u}_{L^{2m}(\Omega)}^{2m-1}.
\end{eqnarray}
We plug the test function
\begin{eqnarray}\label{test temp 2.}
\phi=-\vw\cdot\nx\zeta
\end{eqnarray}
into the weak formulation (\ref{well-posedness temp 4.}) and estimate
each term there. By Sobolev embedding theorem, we have
\begin{eqnarray}
\nm{\phi}_{L^2(\Omega)}&\leq& C\nm{\zeta}_{H^1(\Omega)}\leq C\nm{\zeta}_{W^{2,\frac{2m}{2m-1}}(\Omega)}\leq
C\nm{\bar
u}_{L^{2m}(\Omega)}^{2m-1},\label{test temp 6.}\\
\nm{\phi}_{L^{\frac{2m}{2m-1}}(\Omega)}&\leq&C\nm{\zeta}_{W^{1,\frac{2m}{2m-1}}(\Omega)}\leq
C\nm{\bar
u}_{L^{2m}(\Omega)}^{2m-1}.\label{test temp 4.}
\end{eqnarray}
Easily we can decompose
\begin{eqnarray}\label{test temp 5.}
-\e\int_0^t\iint_{\Omega\times\s^1}(\vw\cdot\nx\phi)u&=&-\e\int_0^t\iint_{\Omega\times\s^1}(\vw\cdot\nx\phi)\bar
u-\e\int_0^t\iint_{\Omega\times\s^1}(\vw\cdot\nx\phi)(u-\bar
u).
\end{eqnarray}
We estimate the two term on the right-hand side of (\ref{test temp
5.}) separately. By (\ref{test temp 1.}) and (\ref{test temp 2.}), we
have
\begin{eqnarray}\label{wellposed temp 1.}
-\e\int_0^t\iint_{\Omega\times\s^1}(\vw\cdot\nx\phi)\bar
u&=&\e\int_0^t\iint_{\Omega\times\s^1}\bar
u\bigg(w_1(w_1\p_{11}\zeta+w_2\p_{12}\zeta)+w_2(w_1\p_{12}\zeta+w_2\p_{22}\zeta)\bigg)\\
&=&\e\int_0^t\iint_{\Omega\times\s^1}\bar
u\bigg(w_1^2\p_{11}\zeta+w_2^2\p_{22}\zeta\bigg)\nonumber\\
&=&\e\pi\int_0^t\int_{\Omega}\bar u(\p_{11}\zeta+\p_{22}\zeta)\nonumber\\
&=&\e\pi\nm{\bar u}_{L^{2m}([0,t]\times\Omega)}^{2m}.\nonumber
\end{eqnarray}
In the second equality, above cross terms vanish due to the symmetry
of the integral over $\s^1$. On the other hand, for the second term
in (\ref{test temp 5}), H\"older's inequality and the elliptic
estimate imply
\begin{eqnarray}\label{wellposed temp 2.}
-\e\int_0^t\iint_{\Omega\times\s^1}(\vw\cdot\nx\phi)(u-\bar
u)&\leq&C \e\nm{u-\bar u}_{L^{2m}([0,t]\times\Omega\times\s^1)}\bigg(\int_0^t\nm{\zeta}^2_{W^{2,\frac{2m}{2m-1}}(\Omega)}\bigg)^{1/2}\\
&\leq&C \e\nm{u-\bar
u}_{L^{2m}([0,t]\times\Omega\times\s^1)}\nm{\bar
u}_{L^{2m}([0,t]\times\Omega\times\s^1)}^{2m-1}\nonumber.
\end{eqnarray}
Based on (\ref{test temp 3.}), (\ref{test temp 6.}), (\ref{test temp 4.}), Sobolev embedding theorem and the trace theorem, we have
\begin{eqnarray}
\\
\nm{\nx\zeta}_{L^{\frac{m}{m-1}}(\Gamma)}\leq C\nm{\nx\zeta}_{W^{\frac{1}{2m},\frac{2m}{2m-1}}(\Gamma)}\leq C\nm{\nx\zeta}_{W^{1,\frac{2m}{2m-1}}(\Omega)}\leq C\nm{\zeta}_{W^{2,\frac{2m}{2m-1}}(\Omega)}\leq
C\nm{\bar
u}_{L^{2m}(\Omega)}^{2m-1}.\no
\end{eqnarray}
Based on (\ref{test temp 3.}), (\ref{test temp 4.}), the trace theorem and
H\"older's inequality, we have
\begin{eqnarray}\label{wellposed temp 3.}
\e\int_0^t\int_{\Gamma}u\phi\ud{\gamma}&=&\e\int_0^t\int_{\Gamma}\pp[u]\phi\ud{\gamma}+\e\int_0^t\int_{\Gamma^+}(1-\pp)[u]\phi\ud{\gamma}
+\e\int_0^t\int_{\Gamma^-}\g\phi\ud{\gamma}\\
&=&\e\int_0^t\int_{\Gamma^+}(1-\pp)[u]\phi\ud{\gamma}
+\e\int_0^t\int_{\Gamma^-}\g\phi\ud{\gamma}\no\\
&\leq&\e\nm{\phi}_{L^{\frac{m}{m-1}}(\Gamma)}\bigg(\nm{(1-\pp)[u]}_{L^m([0,t]\times\Gamma^+)}+\nm{\g}_{L^m([0,t]\times\Gamma^-)}\bigg)\no\\
&\leq&\e\nm{\bar u}_{L^{2m}([0,t]\times\Omega\times\s^1)}^{2m-1}\bigg(\nm{(1-\pp)[u]}_{L^m([0,t]\times\Gamma^+)}+\nm{\g}_{L^m([0,t]\times\Gamma^-)}\bigg).\no
\end{eqnarray}
Also, we obtain
\begin{eqnarray}\label{wellposed temp 5.}
\int_0^t\iint_{\Omega\times\s^1}(u-\bar u)\phi&\leq&\nm{\phi}_{L^{2}([0,t]\times\Omega\times\s^1)}\nm{u-\bar
u}_{L^2([0,t]\times\Omega\times\s^1)}\\
&\leq&
C \nm{\bar
u}_{L^{2m}([0,t]\times\Omega\times\s^1)}^{2m-1}\nm{u-\bar
u}_{L^2([0,t]\times\Omega\times\s^1)},\no
\end{eqnarray}
\begin{eqnarray}\label{wellposed temp 6.}
\int_0^t\iint_{\Omega\times\s^1}\ss\phi\leq C &\leq&\nm{\phi}_{L^{2}([0,t]\times\Omega\times\s^1)}\nm{\ss}_{L^2([0,t]\times\Omega\times\s^1)}\\
&\leq&
C \nm{\bar
u}_{L^{2m}([0,t]\times\Omega\times\s^1)}^{2m-1}\nm{\ss}_{L^2([0,t]\times\Omega\times\s^1)}.\no
\end{eqnarray}
On the other hand, we may apply H\"{o}lder's inequality and Young's inequality to directly estimate
\begin{eqnarray}\label{wellposed temp 7.}
\e^2\iint_{\Omega\times\s^1}u(t)\phi(t)&\leq&C\e^2
\nm{\phi(t)
}_{L^{2}(\Omega\times\s^1)}\nm{u(t)}_{L^{2}(\Omega\times\s^1)}\\
&\leq&C\e^2\nm{\bar u(t)
}^{2m-1}_{L^{2m}(\Omega\times\s^1)}\nm{u(t)}_{L^{2}(\Omega\times\s^1)}\leq C\bigg(\e^{\frac{4m-1}{2m-1}}\nm{u(t)}^{2m}_{L^{2m}(\Omega\times\s^1)}+\e\nm{u(t)}^{2m}_{L^{2}(\Omega\times\s^1)}\bigg).\no
\end{eqnarray}
Similarly, we have
\begin{eqnarray}
\e^2\iint_{\Omega\times\s^1}u(0)\phi(0)&\leq& C\bigg(\e^{\frac{4m-1}{2m-1}}\nm{h}^{2m}_{L^{2m}(\Omega\times\s^1)}+\e\nm{h}^{2m}_{L^{2}(\Omega\times\s^1)}\bigg).
\end{eqnarray}
Then the only remaining term is
\begin{eqnarray}\label{wellposed temp 8.}
-\e^2\int_0^t\iint_{\Omega\times\s^1}\dt\phi u&=&-\e^2\int_0^t\iint_{\Omega\times\s^1}\dt\phi (u-\bar u)\\
&\leq&\nm{\dt\nabla\zeta}_{L^{2}([0,t]\times\Omega\times\s^1)}\nm{u-\bar
u}_{L^2([0,t]\times\Omega\times\s^1)}.\no
\end{eqnarray}
Now we have to tackle $\nm{\dt\nabla\zeta}_{L^{2}([0,t]\times\Omega\times\s^1)}$.
Similar to the $L^2$ estimate, for test function $\phi(\vx,\vw)$ which is independent of time $t$, we have the weak formulation
\begin{eqnarray}\label{test temp 7.}
\e^2\iint_{\Omega\times\s^1}\dt u(t)\phi
&=&\e\iint_{\Omega\times\s^1}(\vw\cdot\nx\phi)u(t)-\iint_{\Omega\times\s^1}(u(t)-\bar
u(t))\phi\\
&&-\e\int_{\Gamma}u(t)\phi\ud{\gamma}+\iint_{\Omega\times\s^1}f(t)\phi.\no
\end{eqnarray}
For fixed $t$, taking $\phi=-\Phi(\vx)$ which satisfies
\begin{eqnarray}
\left\{
\begin{array}{rcl}
\Delta \Phi&=&\dt\bar u(t,\vx)\Big(\bar u(t,\vx)\Big)^{2m-2}\ \ \text{in}\ \
\Omega,\\\rule{0ex}{1.0em} \Phi(t)&=&0\ \ \text{on}\ \ \p\Omega,
\end{array}
\right.
\end{eqnarray}
which further implies $\Phi=\dt\zeta$.
Then the left-hand side of (\ref{test temp 7.}) is actually
\begin{eqnarray}
LHS&=&-\e^2\iint_{\Omega\times\s^1}\Phi\dt u(t)=-\e^2\iint_{\Omega\times\s^1}\Phi\dt\bar u\\
&=&-\e^2\iint_{\Omega\times\s^1}\Phi\Delta\Phi=\e^2\iint_{\Omega\times\s^1}\abs{\nabla\Phi}^2\no\\
&=&\e^2\nm{\dt\nabla\zeta(t)}_{L^{2}(\Omega\times\s^1)}^{2}.\no
\end{eqnarray}
By a similar argument as above and the Poincar\'e inequality, the right-hand side of
(\ref{test temp 7.}) can be bounded as
\begin{eqnarray}
RHS&\leq& \nm{\dt\nabla\zeta(t)}_{L^{2}(\Omega\times\s^1)}\bigg(\nm{u(t)-\bar
u(t)}_{L^{2}(\Omega\times\s^1)}
+\nm{\ss(t)}_{L^2(\Omega\times\s^1)}\bigg).
\end{eqnarray}
Note that the boundary terms vanish due to the construction of $\Phi$.
Therefore, we have
\begin{eqnarray}
\e^2\nm{\dt\nabla\zeta(t)}_{L^{2}(\Omega\times\s^1)}&\leq&
\nm{u(t)-\bar u(t)}_{L^{2}(\Omega\times\s^1)}
+\nm{\ss(t)}_{L^2(\Omega\times\s^1)}.
\end{eqnarray}
For all $t$, we can further integrate over $[0,t]$ to obtain
\begin{eqnarray}\label{wellposed temp 9.}
\e^2\nm{\dt\nabla\zeta}_{L^{2}([0,t]\times\Omega\times\s^1)}
&\leq& \nm{u-\bar
u}_{L^{2}([0,t]\times\Omega\times\s^1)}
+\nm{\ss}_{L^2([0,t]\times\Omega\times\s^1)}.
\end{eqnarray}
Collecting all terms above and using Young's inequality, we
obtain
\begin{eqnarray}\label{well-posedness temp 3.}
\e\nm{\bar u}_{L^{2m}([0,t]\times\Omega\times\s^1)}
&\leq& C \bigg(\e\nm{u-\bar
u}_{L^{2m}([0,t]\times\Omega\times\s^1)}+\e\nm{(1-\pp)[u]}_{L^m([0,t]\times\Gamma^+)}\\
&&+\nm{\ss}_{L^2([0,t]\times\Omega\times\s^1)}+\e\nm{\g}_{L^m([0,t]\times\Gamma^-)}\no\\
&&+\e^{\frac{2m}{2m-1}}\nm{u(t)}_{L^{2m}(\Omega\times\s^1)}+\e\nm{u(t)}_{L^{2}(\Omega\times\s^1)}
+\e^{\frac{2m}{2m-1}}\nm{h}_{L^{2m}(\Omega\times\s^1)}+\e\nm{h}_{L^{2}(\Omega\times\s^1)}\bigg).\no
\end{eqnarray}
\ \\
Step 2: Energy Estimate.\\
As before, we get energy estimate
\begin{eqnarray}\label{well-posedness temp 5.}
&&\frac{\e^2}{2}\nm{u(t)}_{L^2(\Omega\times\s^1)}^2+\frac{\e}{2}\nm{(1-\pp)[u]}^2_{L^2([0,t)\times\Gamma^+)}+\nm{u-\bar
u}_{L^2([0,t)\times\Omega\times\s^1)}^2\\
&\leq&\iint_{[0,t)\times\Omega\times\s^1}\ss u+\frac{\e^2}{2}\nm{\h}_{L^2(\Omega\times\s^1)}^2+\nm{\g}_{L^2([0,t)\times\Gamma^-)}^2
+\e^2\eta\nm{\bar u}_{L^2([0,t)\times\Omega\times\s^1)}^2.\no
\end{eqnarray}
On the other hand, we can square on both sides of
(\ref{well-posedness temp 3.}) to obtain
\begin{eqnarray}\label{well-posedness temp 6.}
&&\e^2\nm{\bar u}^2_{L^{2m}([0,t]\times\Omega\times\s^1)}\\
&\leq& C \bigg(\e^2\nm{u-\bar
u}^2_{L^{2m}([0,t]\times\Omega\times\s^1)}+\e^2\nm{(1-\pp)[u]}^2_{L^m([0,t]\times\Gamma^+)}\no\\
&&+\nm{\ss}^2_{L^2([0,t]\times\Omega\times\s^1)}+\e^2\nm{\g}^2_{L^m([0,t]\times\Gamma^-)}\no\\
&&+\e^{\frac{4m}{2m-1}}\nm{u(t)}^{2}_{L^{2m}(\Omega\times\s^1)}+\e^2\nm{u(t)}^{2}_{L^{2}(\Omega\times\s^1)}
+\e^{\frac{4m}{2m-1}}\nm{h}^{2}_{L^{2m}(\Omega\times\s^1)}+\e^2\nm{h}^{2}_{L^{2}(\Omega\times\s^1)}\bigg).\no
\end{eqnarray}
Taking $\eta$ sufficiently small, multiplying a sufficiently small constant on both sides of
(\ref{well-posedness temp 6.}) and adding it to (\ref{well-posedness temp 5.}) to absorb $\e^2\eta\nm{\bar u}_{L^2([0,t)\times\Omega\times\s^1)}^2$ and $\e^2\nm{u(t)}^{2}_{L^{2}(\Omega\times\s^1)}$, we deduce
\begin{eqnarray}\label{wt 03.}
&&\e^2\nm{u(t)}_{L^2(\Omega\times\s^1)}^2+\e\nm{(1-\pp)[u]}_{L^2([0,t]\times\Gamma^+)}^2+\e^2\nm{\bar
u}_{L^2([0,t]\times\Omega\times\s^1)}^2+\nm{u-\bar
u}_{L^2([0,t]\times\Omega\times\s^1)}^2\\
&\leq&
C \bigg(\int_0^t\iint_{\Omega\times\s^1}\ss u+\e^2\nm{u-\bar
u}^2_{L^{2m}([0,t]\times\Omega\times\s^1)}+\e^2\nm{(1-\pp)[u]}^2_{L^m([0,t]\times\Gamma^+)}\no\\
&&+\nm{\ss}^2_{L^2([0,t]\times\Omega\times\s^1)}+\e^2\nm{\g}^2_{L^m([0,t]\times\Gamma^-)}+\nm{\g}_{L^2([0,t)\times\Gamma^-)}^2\no\\
&&+\e^{\frac{4m}{2m-1}}\nm{u(t)}^2_{L^{2m}(\Omega\times\s^1)}+\e^2\nm{h}^2_{L^{2}(\Omega\times\s^1)}+\e^{\frac{4m}{2m-1}}\nm{h}^2_{L^{2m}(\Omega\times\s^1)}\bigg).\no
\end{eqnarray}
By interpolation estimate and Young's inequality, we have
\begin{eqnarray}
\nm{(1-\pp)[u]}_{L^{m}([0,t]\times\Gamma^+)}&\leq&\nm{(1-\pp)[u]}_{L^2([0,t]\times\Gamma^+)}^{\frac{2}{m}}\nm{(1-\pp)[u]}_{L^{\infty}([0,t]\times\Gamma^+)}^{\frac{m-2}{m}}\\
&=&\bigg(\frac{1}{\e^{\frac{m-2}{m^2}}}\nm{(1-\pp)[u]}_{L^2([0,t]\times\Gamma^+)}^{\frac{2}{m}}\bigg)
\bigg(\e^{\frac{m-2}{m^2}}\nm{(1-\pp)[u]}_{L^{\infty}([0,t]\times\Gamma^+)}^{\frac{m-2}{m}}\bigg)\no\\
&\leq&C\bigg(\frac{1}{\e^{\frac{m-2}{m^2}}}\nm{(1-\pp)[u]}_{L^2([0,t]\times\Gamma^+)}^{\frac{2}{m}}\bigg)^{\frac{m}{2}}+o(1)
\bigg(\e^{\frac{m-2}{m^2}}\nm{(1-\pp)[u]}_{L^{\infty}([0,t]\times\Gamma^+)}^{\frac{m-2}{m}}\bigg)^{\frac{m}{m-2}}\no\\
&\leq&\frac{C}{\e^{\frac{m-2}{2m}}}\nm{(1-\pp)[u]}_{L^2([0,t]\times\Gamma^+)}+o(1)\e^{\frac{1}{m}}\nm{(1-\pp)[u]}_{L^{\infty}([0,t]\times\Gamma^+)}\no\\
&\leq&\frac{C}{\e^{\frac{m-2}{2m}}}\nm{(1-\pp)[u]}_{L^2([0,t]\times\Gamma^+)}+o(1)\e^{\frac{1}{m}}\nm{u}_{L^{\infty}([0,t]\times\Omega\times\s^1)}.\no
\end{eqnarray}
Similarly, we have
\begin{eqnarray}
\nm{u-\bar u}_{L^{2m}([0,t]\times\Omega\times\s^1)}&\leq&\nm{u-\bar u}_{L^2([0,t]\times\Omega\times\s^1)}^{\frac{1}{m}}\nm{u-\bar u}_{L^{\infty}([0,t]\times\Omega\times\s^1)}^{\frac{m-1}{m}}\\
&=&\bigg(\frac{1}{\e^{\frac{m-1}{m^2}}}\nm{u-\bar u}_{L^2([0,t]\times\Omega\times\s^1)}^{\frac{1}{m}}\bigg)\bigg(\e^{\frac{m-1}{m^2}}\nm{u-\bar u}_{L^{\infty}([0,t]\times\Omega\times\s^1)}^{\frac{m-1}{m}}\bigg)\no\\
&\leq&C\bigg(\frac{1}{\e^{\frac{m-1}{m^2}}}\nm{u-\bar u}_{L^2([0,t]\times\Omega\times\s^1)}^{\frac{1}{m}}\bigg)^{m}+o(1)\bigg(\e^{\frac{m-1}{m^2}}\nm{u-\bar u}_{L^{\infty}([0,t]\times\Omega\times\s^1)}^{\frac{m-1}{m}}\bigg)^{\frac{m}{m-1}}\no\\
&\leq&\frac{C}{\e^{\frac{m-1}{m}}}\nm{u-\bar u}_{L^2([0,t]\times\Omega\times\s^1)}+o(1)\e^{\frac{1}{m}}\nm{u-\bar u}_{L^{\infty}([0,t]\times\Omega\times\s^1)}.\no
\end{eqnarray}
Also, we know
\begin{eqnarray}
\nm{u(t)}_{L^{2m}(\Omega\times\s^1)}&\leq&\nm{u(t)}_{L^2(\Omega\times\s^1)}^{\frac{1}{m}}\nm{u(t)}_{L^{\infty}(\Omega\times\s^1)}^{\frac{m-1}{m}}\\
&=&\bigg(\frac{1}{\e^{\frac{(m-1)^2}{m^2(2m-1)}}}\nm{u(t)}_{L^2(\Omega\times\s^1)}^{\frac{1}{m}}\bigg)
\bigg(\e^{\frac{(m-1)^2}{m^2(2m-1)}}\nm{u(t)}_{L^{\infty}(\Omega\times\s^1)}^{\frac{m-1}{m}}\bigg)\no\\
&\leq&C\bigg(\frac{1}{\e^{\frac{(m-1)^2}{m^2(2m-1)}}}\nm{u(t)}_{L^2(\Omega\times\s^1)}^{\frac{1}{m}}\bigg)^{m}
+o(1)\bigg(\e^{\frac{(m-1)^2}{m^2(2m-1)}}\nm{u(t)}_{L^{\infty}(\Omega\times\s^1)}^{\frac{m-1}{m}}\bigg)^{\frac{m}{m-1}}\no\\
&\leq&\frac{C}{\e^{\frac{(m-1)^2}{m(2m-1)}}}\nm{u(t)}_{L^2(\Omega\times\s^1)}+o(1)\e^{\frac{m-1}{m(2m-1)}}\nm{u(t)}_{L^{\infty}(\Omega\times\s^1)}.\no
\end{eqnarray}
We need this extra $\e^{\frac{1}{m}}$ for the convenience of $L^{\infty}$ estimate.
Then we know for sufficiently small $\e$,
\begin{eqnarray}
\e^2\nm{(1-\pp)[u]}_{L^{m}([0,t]\times\Gamma^+)}^2
&\leq&C\e^{2-\frac{m-2}{m}}\nm{(1-\pp)[u]}_{L^2([0,t]\times\Gamma^+)}^2+o(1)\e^{2+\frac{2}{m}}\nm{u}_{L^{\infty}([0,t]\times\Gamma^+)}^2\\
&\leq&o(1)\e\nm{(1-\pp)[u]}_{L^2([0,t]\times\Gamma^+)}^2+o(1)\e^{2+\frac{2}{m}}\nm{u}_{L^{\infty}([0,t]\times\Gamma^+)}^2.\no
\end{eqnarray}
Similarly, we have
\begin{eqnarray}
\e^2\nm{u-\bar
u}_{L^{2m}([0,t]\times\Omega\times\s^1)}^2&\leq&\e^{2-\frac{2m-2}{m}}\nm{u-\bar u}_{L^2([0,t]\times\Omega\times\s^1)}^2+o(1)\e^{2+\frac{2}{m}}\nm{u}_{L^{\infty}([0,t]\times\Omega\times\s^1)}^2\\
&\leq& o(1)\nm{u-\bar u}_{L^2([0,t]\times\Omega\times\s^1)}^2+o(1)\e^{2+\frac{2}{m}}\nm{u}_{L^{\infty}([0,t]\times\Omega\times\s^1)}^2.\no
\end{eqnarray}
Also, we know
\begin{eqnarray}
\e^{\frac{4m}{2m-1}}\nm{u(t)}^2_{L^{2m}(\Omega\times\s^1)}&\leq& \e^{\frac{4m}{2m-1}}\bigg(\frac{C}{\e^{\frac{2(m-1)^2}{m(2m-1)}}}\nm{u(t)}^2_{L^2(\Omega\times\s^1)}+o(1)\e^{\frac{2(m-1)}{m(2m-1)}}\nm{u(t)}^2_{L^{\infty}(\Omega\times\s^1)}\bigg)\\
&=&C\e^{1+\frac{5m-2}{m(2m-1)}}\nm{u(t)}^2_{L^2(\Omega\times\s^1)}+o(1)\e^{1+\frac{2}{m}}\nm{u(t)}^2_{L^{\infty}(\Omega\times\s^1)}\no\\
&\leq&C\e^{1+\frac{5m-2}{m(2m-1)}}\nm{u(t)}^2_{L^2(\Omega\times\s^1)}+o(1)\e^{1+\frac{2}{m}}\nm{u}^2_{L^{\infty}([0,\infty)\times\Omega\times\s^1)}.\no
\end{eqnarray}
In (\ref{wt 03.}), we can absorb $\nm{u-\bar u}_{L^2(\Omega\times\s^1)}$ and $\e\nm{(1-\pp)[u]}_{L^2(\Gamma^+)}^2$ into left-hand side to obtain
\begin{eqnarray}\label{wt 04.}
&&\e^2\nm{u(t)}_{L^2(\Omega\times\s^1)}^2+\e\nm{(1-\pp)[u]}_{L^2([0,t]\times\Gamma^+)}^2+\e^2\nm{\bar
u}_{L^2([0,t]\times\Omega\times\s^1)}^2+\nm{u-\bar
u}_{L^2([0,t]\times\Omega\times\s^1)}^2\\
&\leq&
C \bigg(o(1)\e^{1+\frac{2}{m}}\nm{u}^2_{L^{\infty}([0,\infty)\times\Omega\times\s^1)}+\int_0^t\iint_{\Omega\times\s^1}\ss u+\nm{\ss}^2_{L^2([0,t]\times\Omega\times\s^1)}\no\\
&&+\e^2\nm{\g}^2_{L^m([0,t]\times\Gamma^-)}+\nm{\g}_{L^2([0,t)\times\Gamma^-)}^2\no\\
&&+\e^{1+\frac{5m-2}{m(2m-1)}}\nm{u(t)}^2_{L^2(\Omega\times\s^1)}+\e^2\nm{h}^2_{L^{2}(\Omega\times\s^1)}+\e^{\frac{4m}{2m-1}}\nm{h}^2_{L^{2m}(\Omega\times\s^1)}\bigg).\no
\end{eqnarray}
Note that we cannot further absorb $\e^{1+\frac{5m-2}{m(2m-1)}}\nm{u(t)}^2_{L^2(\Omega\times\s^1)}$ into the left-hand side since its power of $\e$ is insufficient. This is the critical value and makes the whole estimate worse.

Based on (\ref{well-posedness temp 7}), we have
\begin{eqnarray}
\\
\nm{u(t)}_{L^2(\Omega\times\s^1)}^2
&\leq& C \bigg(\frac{1}{\e^2}\tm{\ss}{[0,t]\times\Omega\times\s^1}^2+
\frac{1}{\e^2}\int_0^t\iint_{\Omega\times\s^1}\ss u_{\l}+\nm{\h}_{L^2(\Omega\times\s^1)}^2+\frac{1}{\e^2}\nm{\g}_{L^2([0,t]\times\Gamma^-)}^2\bigg).\no
\end{eqnarray}
Plugging it into the right-hand side of (\ref{wt 04.}), we can conclude that
\begin{eqnarray}\label{wt 04.}
&&\e^2\nm{u(t)}_{L^2(\Omega\times\s^1)}^2+\e\nm{(1-\pp)[u]}_{L^2([0,t]\times\Gamma^+)}^2+\e^2\nm{\bar
u}_{L^2([0,t]\times\Omega\times\s^1)}^2+\nm{u-\bar
u}_{L^2([0,t]\times\Omega\times\s^1)}^2\\
&\leq&
C \bigg(o(1)\e^{1+\frac{2}{m}}\nm{u}^2_{L^{\infty}([0,\infty)\times\Omega\times\s^1)}+\e^{-1+\frac{5m-2}{m(2m-1)}}\int_0^t\iint_{\Omega\times\s^1}\ss u+\e^{-1+\frac{5m-2}{m(2m-1)}}\nm{\ss}^2_{L^2([0,t]\times\Omega\times\s^1)}\no\\
&&+\e^2\nm{\g}^2_{L^m([0,t]\times\Gamma^-)}+\e^{-1+\frac{5m-2}{m(2m-1)}}\nm{\g}^2_{L^2([0,t]\times\Gamma^-)}\no\\
&&+\e^{1+\frac{5m-2}{m(2m-1)}}\nm{h}^2_{L^{2}(\Omega\times\s^1)}+\e^{\frac{4m}{2m-1}}\nm{h}^2_{L^{2m}(\Omega\times\s^1)}\bigg).\no
\end{eqnarray}
We can decompose
\begin{eqnarray}
\iint_{\Omega\times\s^1}fu=\iint_{\Omega\times\s^1}f\bar u+\iint_{\Omega\times\s^1}f(u-\bar u).
\end{eqnarray}
H\"older's inequality and Cauchy's inequality imply
\begin{eqnarray}
\e^{-1+\frac{5m-2}{m(2m-1)}}\iint_{\Omega\times\s^1}f\bar u&\leq&\e^{-1+\frac{5m-2}{m(2m-1)}}\nm{f}_{L^{\frac{2m}{2m-1}}(\Omega\times\s^1)}\nm{\bar u}_{L^{2m}(\Omega\times\s^1)}\\
&\leq&\frac{C}{\e^{3-\frac{5m-2}{m(2m-1)}}}\nm{f}_{L^{\frac{2m}{2m-1}}(\Omega\times\s^1)}^2+o(1)\e^2\nm{\bar u}_{L^{2m}(\Omega\times\s^1)}^2,\no
\end{eqnarray}
and
\begin{eqnarray}
\e^{-1+\frac{5m-2}{m(2m-1)}}\iint_{\Omega\times\s^1}f(u-\bar u)&\leq& C\frac{1}{\e^{1-\frac{5m-2}{m(2m-1)}}}\nm{f}_{L^{2}(\Omega\times\s^1)}^2+o(1)\nm{u-\bar u}_{L^2(\Omega\times\s^1)}^2.
\end{eqnarray}
Hence, absorbing $\e^2\nm{\bar u}_{L^{2m}(\Omega\times\s^1)}^2$ and $\nm{u-\bar u}_{L^2(\Omega\times\s^1)}^2$ into left-hand side of (\ref{wt 04.}), we get
\begin{eqnarray}\label{wt 06.}
&&\e^2\nm{u(t)}_{L^2(\Omega\times\s^1)}^2+\e\nm{(1-\pp)[u]}_{L^2([0,t]\times\Gamma^+)}^2+\e^2\nm{\bar
u}_{L^2([0,t]\times\Omega\times\s^1)}^2+\nm{u-\bar
u}_{L^2([0,t]\times\Omega\times\s^1)}^2\\
&\leq&
C \bigg(o(1)\e^{2+\frac{2}{m}}\nm{u}_{L^{\infty}([0,t]\times\Omega\times\s^1)}^2+\frac{1}{\e^{1-\frac{5m-2}{m(2m-1)}}}\tm{\ss}{[0,t]\times\Omega\times\s^1}^2
+\frac{1}{\e^{3-\frac{5m-2}{m(2m-1)}}}\nm{\ss}_{L^{\frac{2m}{2m-1}}([0,t]\times\Omega\times\s^1)}^2\no\\
&&+\e^{1+\frac{5m-2}{m(2m-1)}}\nm{\h}_{L^2(\Omega\times\s^1)}^2+\e^{\frac{4m}{2m-1}}\nm{h}^2_{L^{2m}(\Omega\times\s^1)}\no\\
&&+\frac{1}{\e^{1-\frac{5m-2}{m(2m-1)}}}\nm{\g}_{L^2([0,t]\times\Gamma^-)}^2+\e^2\nm{g}_{L^m([0,t]\times\Gamma^+)}^2\bigg).\nonumber
\end{eqnarray}
which implies
\begin{eqnarray}\label{wt 07.}
&&\nm{u(t)}_{L^2(\Omega\times\s^1)}+\frac{1}{\e^{\frac{1}{2}}}\nm{(1-\pp)[u]}_{L^2([0,t]\times\Gamma^+)}+\nm{\bar
u}_{L^2([0,t]\times\Omega\times\s^1)}+\frac{1}{\e}\nm{u-\bar
u}_{L^2([0,t]\times\Omega\times\s^1)}\\
&\leq&
C \bigg(o(1)\e^{\frac{1}{m}}\nm{u}_{L^{\infty}([0,t]\times\Omega\times\s^1)}+\frac{1}{\e^{\frac{3}{2}-\frac{5m-2}{2m(2m-1)}}}\tm{\ss}{[0,t]\times\Omega\times\s^1}
+\frac{1}{\e^{\frac{5}{2}-\frac{5m-2}{2m(2m-1)}}}\nm{\ss}_{L^{\frac{2m}{2m-1}}([0,t]\times\Omega\times\s^1)}\no\\
&&+\frac{1}{\e^{\frac{1}{2}-\frac{5m-2}{2m(2m-1)}}}\nm{\h}_{L^2(\Omega\times\s^1)}+\e^{\frac{1}{2m-1}}\nm{h}_{L^{2m}(\Omega\times\s^1)}\no\\
&&+\frac{1}{\e^{\frac{3}{2}-\frac{5m-2}{2m(2m-1)}}}\nm{\g}_{L^2([0,t]\times\Gamma^-)}+\nm{g}_{L^m([0,t]\times\Gamma^+)}\bigg).\nonumber
\end{eqnarray}

\end{proof}

\subsection{$L^{\infty}$ Estimate - Second Round}

\begin{theorem}\label{LI estimate}
Assume $\ss(t,\vx,\vw)\in
L^{\infty}([0,\infty)\times\Omega\times\s^1)$, $\h(\vx,\vw)\in
L^{\infty}(\Omega\times\s^1)$ and $\g(t,x_0,\vw)\in
L^{\infty}([0,\infty)\times\Gamma^-)$. Then the solution $u(t,\vx,\vw)$ to the neutron transport
equation (\ref{neutron}) satisfies
\begin{eqnarray}
&&\im{u}{[0,\infty)\times\Omega\times\s^1}\\
&\leq& C \bigg(\frac{1}{\e^{\frac{5}{2}-\frac{9m-4}{2m(2m-1)}}}\nm{\ss}_{L^{\frac{2m}{2m-1}}([0,\infty)\times\Omega\times\s^1)}
+\frac{1}{\e^{\frac{3}{2}-\frac{9m-4}{2m(2m-1)}}}\tm{\ss}{[0,\infty)\times\Omega\times\s^1}
+\im{\ss}{[0,\infty)\times\Omega\times\s^1}\no\\
&&+\frac{1}{\e^{\frac{m-1}{2m-1}}}\nm{h}_{L^{2m}(\Omega\times\s^1)}+\frac{1}{\e^{\frac{1}{2}-\frac{9m-4}{2m(2m-1)}}}\nm{\h}_{L^2(\Omega\times\s^1)}+\im{\h}{\Omega\times\s^1}\no\\
&&+\frac{1}{\e^{\frac{1}{m}}}\nm{g}_{L^m([0,\infty)\times\Gamma^+)}+\frac{1}{\e^{\frac{3}{2}-\frac{9m-4}{2m(2m-1)}}}\nm{\g}_{L^2([0,\infty)\times\Gamma^-)}
+\im{\g}{[0,\infty)\times\Gamma^-}\bigg).\nonumber
\end{eqnarray}
\end{theorem}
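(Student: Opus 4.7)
The plan is to repeat the mild-formulation argument of Theorem \ref{LI estimate.} but, at the change-of-variables step, to apply H\"older's inequality with exponents $2m$ and $2m/(2m-1)$ in place of the Cauchy--Schwarz $(2,2)$ pairing, and then close the bootstrap against the $L^{2m}$ information hidden inside the proof of Lemma \ref{LN estimate}. This is the standard stochastic-cycle $L^{2m}$--$L^\infty$ gain adapted to the diffuse boundary and time-dependent setting.

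First I would unfold $u$ along its back-time stochastic cycle exactly as in Step 1 of Theorem \ref{LI estimate.}, producing the three contributions $I,II,III$ in the mild formulation. With $k=C_1T_0^{5/4}$ and Lemma \ref{well-posedness lemma 4} together with the decay $\ue^{-T_0}$, the iterated diffuse-reflection term $III$ is at most $\delta\nm{u}_{L^\infty}$ for any preassigned small $\delta>0$, while the non-$\bar u$ parts of $I,II$ are estimated in $L^\infty$ directly by $\nm{\ss}_{L^\infty}$, $\nm{\h}_{L^\infty}$ and $\nm{\g}_{L^\infty}$. For the $\bar u$ pieces, after one further Duhamel iteration and after removing the grazing/short-time sets at cost $\delta\nm{u}_{L^\infty}$, the two inner angular integrations become a change of variables $(\psi,\phi)\mapsto(y_1,y_2)\in\Omega$ with Jacobian bounded below by $\e^2\delta^3$. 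Applying H\"older with $(2m,2m/(2m-1))$ rather than Cauchy--Schwarz produces
\begin{equation*}
\int\abs{\bar u(y)}\,\ud{\psi}\,\ud{\phi}\leq\frac{C}{\e^{1/m}\delta^{3/(2m)}}\nm{\bar u}_{L^{2m}(\Omega)},
\end{equation*}
which is the key improvement: the factor $\e^{-1}$ of Theorem \ref{LI estimate.} becomes $\e^{-1/m}$. Integrating in the remaining time variables yields $\nm{u}_{L^\infty}\leq C\e^{-1/m}\nm{\bar u}_{L^{2m}([0,\infty)\times\Omega\times\s^1)}+(\text{data in }L^\infty)+\delta\nm{u}_{L^\infty}$.

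The second ingredient is an $L^{2m}$ bound on $\bar u$, which I extract from the proof of Lemma \ref{LN estimate} rather than its statement: its core energy inequality has the schematic form
\begin{equation*}
\e\nm{\bar u}_{L^{2m}}\lesssim \e\nm{u-\bar u}_{L^{2m}}+\e\nm{(1-\pp)[u]}_{L^m(\Gamma^+)}+\nm{\ss}_{L^2}+\e\nm{\g}_{L^m(\Gamma^-)}+\cdots
\end{equation*}
plus interpolatable terms in $\nm{u(t)}_{L^{2m}}$ and $\nm{h}_{L^{2m}}$. Interpolating every $L^{2m}$ quantity as $L^2\cdot L^\infty$, for instance $\nm{u-\bar u}_{L^{2m}}\leq\nm{u-\bar u}_{L^2}^{1/m}\nm{u}_{L^\infty}^{(m-1)/m}$ and similarly for the boundary and time-slice terms, inserting the $L^2$ estimates already proved in Lemma \ref{LN estimate}, and applying $\epsilon$-Young with a sufficiently small constant converts this into $\nm{\bar u}_{L^{2m}}\leq C(\text{data combination})+o(1)\e^{1/m}\nm{u}_{L^\infty}$. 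Substituting back into the stochastic-cycle bound, the crucial cancellation of $\e^{-1/m}$ with $\e^{1/m}$ delivers an \emph{absolutely} small $o(1)\nm{u}_{L^\infty}$ term that can be absorbed on the left, and collecting $\e$-powers reproduces the announced exponents $\tfrac{3}{2}-\tfrac{9m-4}{2m(2m-1)}$ on $\nm{\ss}_{L^2}$ and $\nm{\g}_{L^2}$, $\tfrac{5}{2}-\tfrac{9m-4}{2m(2m-1)}$ on $\nm{\ss}_{L^{2m/(2m-1)}}$, $\tfrac{m-1}{2m-1}$ on $\nm{h}_{L^{2m}}$, and $\tfrac{1}{m}$ on $\nm{g}_{L^m(\Gamma^+)}$.

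The main obstacle is precisely this final matching of $\e$-powers: the $o(1)\e^{1/m}\nm{u}_{L^\infty}$ perturbation extracted from each interpolation must, after being multiplied by the $\e^{-1/m}$ from the stochastic-cycle H\"older, still produce a genuinely small coefficient on $\nm{u}_{L^\infty}$ rather than one that blows up as $\e\to0$. Any mismatch between the two $\e^{1/m}$ factors would destroy absorption and force a worse exponent; it is exactly this delicate cancellation, together with the extra $\e^{-1/m}$ cost of converting $L^{2m}$ into $L^\infty$, that accounts for the appearance of the anomalous correction $\tfrac{1}{m}$ in the stated exponents beyond those already present in Lemma \ref{LN estimate}.
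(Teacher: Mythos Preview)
Your proposal is correct and follows essentially the same route as the paper. The paper's proof is terser---it simply revisits the mild-formulation argument of Theorem \ref{LI estimate.}, replaces Cauchy--Schwarz by H\"older with exponent $2m$ at the change-of-variables step to obtain $\im{u}{[0,t]\times\Omega\times\s^1}\leq C\e^{-1/m}\nm{\bar u}_{L^{2m}}+(\text{data})_{L^\infty}$, and then invokes Lemma \ref{LN estimate} (whose proof already contains the $L^{2m}$ kernel bound and the interpolation/Young steps you describe) to close; the $\e^{1/m}$ built into the $o(1)\e^{1/m}\nm{u}_{L^\infty}$ term there is precisely what allows absorption after the $\e^{-1/m}$ loss, exactly as you identified.
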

\begin{proof}
Based on the analysis in proving Theorem \ref{LI estimate.}, the key step is the estimate of $I_{2,2,2}$. Here, we use H\"{o}lder's inequality with a different exponent to obtain
\begin{eqnarray}
I_{2,2,2}&\leq&C\bigg(\int_{0}^{t}\int_{\Omega}\frac{1}{\e^2\delta^3}\abs{\bar u^{2m}(s,\vec y)}\ud{\vec y}\ud{s}\bigg)^{\frac{1}{2m}}.
\end{eqnarray}
Therefore, we have shown
\begin{eqnarray}
\abs{I_{2,2}}\leq \delta\im{u}{[0,t]\times\Omega\times\s^1}+\frac{1}{\d^{\frac{3}{2m}}\e^{\frac{1}{m}}}\nm{\bar u}_{L^{2m}([0,t]\times\Omega\times\s^1)}.
\end{eqnarray}
In a similar fashion, we can show that
\begin{eqnarray}
\abs{u}&\leq& \delta\im{u}{[0,t]\times\Omega\times\s^1}+\frac{1}{\d^{\frac{3}{2m}}\e^{\frac{1}{m}}}\nm{\bar u}_{L^{2m}([0,t]\times\Omega\times\s^1)}\\
&&+\im{\ss}{[0,\infty)\times\Omega\times\s^1}+\im{\g}{[0,\infty)\times\Gamma^-}+\im{\h}{\Omega\times\s^1}.\no
\end{eqnarray}
This naturally implies that
\begin{eqnarray}
\im{u}{[0,t]\times\Omega\times\s^1}&\leq& \frac{1}{\d^{\frac{3}{2m}}\e^{\frac{1}{m}}}\nm{\bar u}_{L^{2m}([0,t]\times\Omega\times\s^1)}\\
&&+\im{\ss}{[0,t]\times\Omega\times\s^1}+\im{\g}{[0,t]\times\Gamma^-}+\im{\h}{\Omega\times\s^1}.\no
\end{eqnarray}
Then using Theorem \ref{LN estimate}, we get
\begin{eqnarray}
&&\im{u}{[0,t]\times\Omega\times\s^1}\\
&\leq& C \bigg(o(1)\nm{u}_{L^{\infty}([0,t]\times\Omega\times\s^1)}\no\\
&&+\frac{1}{\e^{\frac{5}{2}-\frac{9m-4}{2m(2m-1)}}}\nm{\ss}_{L^{\frac{2m}{2m-1}}([0,t]\times\Omega\times\s^1)}
+\frac{1}{\e^{\frac{3}{2}-\frac{9m-4}{2m(2m-1)}}}\tm{\ss}{[0,t]\times\Omega\times\s^1}
+\im{\ss}{[0,t]\times\Omega\times\s^1}\no\\
&&+\frac{1}{\e^{\frac{m-1}{2m-1}}}\nm{h}_{L^{2m}(\Omega\times\s^1)}+\frac{1}{\e^{\frac{1}{2}-\frac{9m-4}{2m(2m-1)}}}\nm{\h}_{L^2(\Omega\times\s^1)}+\im{\h}{\Omega\times\s^1}\no\\
&&+\frac{1}{\e^{\frac{1}{m}}}\nm{g}_{L^m([0,t]\times\Gamma^+)}+\frac{1}{\e^{\frac{3}{2}-\frac{9m-4}{2m(2m-1)}}}\nm{\g}_{L^2([0,t]\times\Gamma^-)}
+\im{\g}{[0,t]\times\Gamma^-}\bigg).\nonumber
\end{eqnarray}
Absorbing $o(1)$ term into the left-hand side, we obtain the desired result.
\end{proof}

\begin{theorem}\label{LI estimate..}
Assume $\ue^{K_0t}\ss(t,\vx,\vw)\in
L^{\infty}([0,\infty)\times\Omega\times\s^1)$, $\h(\vx,\vw)\in
L^{\infty}(\Omega\times\s^1)$ and $\ue^{K_0 t}\g(t,x_0,\vw)\in
L^{\infty}([0,\infty)\times\Gamma^-)$ for some $K_0>0$. Then there exists $0<K\leq K_0$ such that the solution
$u(t,\vx,\vw)$ to the neutron transport equation (\ref{neutron}) satisfies
\begin{eqnarray}
&&\im{\ue^{Kt}u}{[0,\infty)\times\Omega\times\s^1}\\
&\leq& C \bigg(\frac{1}{\e^{\frac{5}{2}-\frac{9m-4}{2m(2m-1)}}}\nm{\ue^{Kt}\ss}_{L^{\frac{2m}{2m-1}}([0,\infty)\times\Omega\times\s^1)}
+\frac{1}{\e^{\frac{3}{2}-\frac{9m-4}{2m(2m-1)}}}\tm{\ue^{Kt}\ss}{[0,\infty)\times\Omega\times\s^1}
+\im{\ue^{Kt}\ss}{[0,\infty)\times\Omega\times\s^1}\no\\
&&+\frac{1}{\e^{\frac{m-1}{2m-1}}}\nm{h}_{L^{2m}(\Omega\times\s^1)}+\frac{1}{\e^{\frac{1}{2}-\frac{9m-4}{2m(2m-1)}}}\nm{\h}_{L^2(\Omega\times\s^1)}+\im{\h}{\Omega\times\s^1}\no\\
&&+\frac{1}{\e^{\frac{1}{m}}}\nm{\ue^{Kt}g}_{L^m([0,\infty)\times\Gamma^+)}+\frac{1}{\e^{\frac{3}{2}-\frac{9m-4}{2m(2m-1)}}}\nm{\ue^{Kt}g}_{L^2([0,\infty)\times\Gamma^-)}
+\im{\ue^{Kt}g}{[0,\infty)\times\Gamma^-}\bigg).\nonumber
\end{eqnarray}
\end{theorem}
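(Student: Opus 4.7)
The plan is to reduce Theorem \ref{LI estimate..} to Theorem \ref{LI estimate} via the exponential substitution $v(t,\vx,\vw) := \ue^{Kt} u(t,\vx,\vw)$ for a constant $K \in (0, K_0]$ to be fixed at the end. Since the diffusive reflection operator $\pp$ is linear and instantaneous in $t$, a direct computation (noting $\ue^{Kt}\dt u = \dt v - K v$) shows that $v$ solves the same type of equation as \eqref{neutron}, namely
\begin{eqnarray*}
\e^2 \dt v + \e \vw \cdot \nx v + v - \bar v &=& \ue^{Kt}\ss + \e^2 K v, \\
v(0,\vx,\vw) &=& \h(\vx,\vw), \\
(v - \pp[v])(t,\vx_0,\vw) &=& \ue^{Kt}\g(t,\vx_0,\vw) \text{ on } \Gamma^-,
\end{eqnarray*}
the only new feature being the extra source term $\e^2 K v$.

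I would then apply Theorem \ref{LI estimate} to $v$, treating $\tilde\ss := \ue^{Kt}\ss + \e^2 K v$ as the source and $\tilde\g := \ue^{Kt}\g$ as the boundary datum. The contribution of $\ue^{Kt}\ss$ and $\ue^{Kt}\g$ yields precisely the norms appearing on the right-hand side of the stated inequality. The extra contribution of $\e^2 K v$ produces correction terms of the form $C\e^2 K \cdot \e^{-\alpha(m,p)}\nm{v}_{L^p}$ for $p \in \{2,\,2m/(2m-1),\,\infty\}$. The $L^\infty$ correction $C\e^2 K \lnnm{v}$ is absorbed directly for $K$ small. For the $L^2$ and $L^{2m/(2m-1)}$ corrections I would apply the same interpolation inequalities already employed in the proof of Lemma \ref{LN estimate} — schematically $\nm{v}_{L^{2m/(2m-1)}} \leq C\e^{-a}\tnnm{v} + o(1)\,\e^{b}\lnnm{v}$ — so that after combining with the corresponding weighted-energy and $L^{2m}$ estimates, all $v$-dependent contributions reduce to $C K\,\lnnm{v} + (\text{weighted data terms})$. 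Taking $K$ sufficiently small, independently of $\e$, absorbs the $\lnnm{v}$ piece into the left-hand side and produces the desired bound with the claimed $\e$-exponents.

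The main technical obstacle is the $\e$-bookkeeping: one must verify, for each integer $m \geq 2$, that after interpolation the effective $\e$-exponent multiplying $K\lnnm{v}$ is nonnegative, so that smallness of $K$ alone (uniform in $\e$) suffices for absorption; this is what eventually fixes the quantitative choice of $K$ relative to the constants in Theorem \ref{LI estimate} and Lemma \ref{LN estimate}. A minor auxiliary point is that all $L^p$-norms of $v$ on $[0,\infty)$ must be known to be finite \emph{a priori} before interpolation. I would handle this in the usual way: first run the argument on a finite window $[0,T]$ — where Theorem \ref{LI estimate} applied to $u$ together with the factor $\ue^{KT}$ provides a trivial finite bound on $v$ — and then pass $T \to \infty$, using that the constants in the absorption step are independent of $T$ and that $\ue^{K_0 t}\ss,\ \ue^{K_0 t}\g \in L^\infty$ combined with $K < K_0$ gives enough integrability of the weighted data on $[0,\infty)$ for every $L^p$-norm on the right-hand side to be finite.
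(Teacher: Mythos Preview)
Your proposal is correct and follows essentially the same approach as the paper: the paper also sets $v=\ue^{Kt}u$, observes that $v$ satisfies the same equation with the extra source $K\e^2 v$, and then remarks that the factor $\e^2$ allows one to recover all the previous estimates verbatim. Your write-up is simply a more explicit rendering of that last sentence, spelling out the absorption argument and the finite-window a priori step that the paper leaves implicit.
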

\begin{proof}
Let $v=\ue^{K t}u$. Then $v$ satisfies the equation
\begin{eqnarray}
\left\{
\begin{array}{l}
\e^2\dt v+\e\vw\cdot\nabla_xv+v-\bar v=\ue^{Kt}\ss+K\e^2v\ \ \text{for}\ \
(t,\vx,\vw)\in[0,\infty)\times\Omega\times\s^1,\\\rule{0ex}{1.0em} v(0,\vx,\vw)=\h(\vx,\vw)\ \ \text{for}\ \
(\vx,\vw)\in\Omega\times\s^1,\\\rule{0ex}{1.0em} v(t,\vx_0,\vw)-\pp[v](t,\vx_0)=\ue^{Kt}\g(t,\vx_0,\vw)\
\ \text{for}\ \ t\in[0,\infty)\ \ \vx_0\in\p\Omega\ \ \text{and}\ \vw\cdot\vn<0.
\end{array}
\right.
\end{eqnarray}
Note that we have an extra term $K\e^2v$. However, $\e^2$ helps to recover all the estimates in previous theorems and we can obtain exactly the same results.
\end{proof}


\bibliographystyle{siam}
\bibliography{Reference}

\end{document}